\numberwithin{equation}{section}
\patchcmd\caption@subtypehook{\let\label\subcaption@label}
{\let\label\subcaption@label\let\ltx@label\subcaption@label}{}{\fail}
\def\swappedhead#1#2#3{%
  \thmnumber{\@upn{\the\thm@headfont #2\@ifnotempty{#1}{.~}}}%
  \thmname{#1}%
  \thmnote{ {\the\thm@notefont(#3)}}}
\theoremstyle{plain}
\newtheorem{thm}{Theorem}[section]%
\newtheorem{lem}[thm]{Lemma}%
\newtheorem{cor}[thm]{Corollary}%
\newtheorem{prop}[thm]{Proposition}%
\newtheorem{conj}[thm]{Conjecture}%
\theoremstyle{definition}
\newtheorem{rmk}[thm]{Remark}%
\newtheorem{question}[thm]{Question}%
\newtheorem{defn}[thm]{Definition}%
 \newtheoremstyle{remarkstyle}%
   {}%             space above
   {}%             space below
   {\normalfont}%     body font
   {}%                indent
   {\bfseries}%        header font
   {.}%               punctuation
   { }%               space after head
   {\thmnote{#3}}%    typeset note only.
\theoremstyle{remarkstyle}
\newtheorem*{varremark}{}
\newenvironment{remark}[1][Remark]{\begin{varremark}[#1]}{\end{varremark}}
 \newtheoremstyle{claimstyle}%
   {}%             space above
   {}%             space below
   {\normalfont}%     body font
   {}%                indent
   {\itshape}%        header font
   {.}%               punctuation
   { }%               space after head
   {\thmnote{#3}}%    typeset note only.
\theoremstyle{claimstyle}
\newtheorem*{varclaim}{}
\newcommand{\T}{\mathcal{T}}
\newcommand{\eucl}{\operatorname{eucl}}
\newcommand{\closure}{\operatorname{cl}}
\newcommand{\Deltafn}{\bm{d}}
\newcommand{\deriv}{{\rm d}}
\newcommand{\Teich}{\mathcal{T}}
\newcommand{\supp}{\operatorname{supp}}
\newcommand{\area}{\operatorname{area}}
\newcommand{\HH}{\mathbb{H}}
\renewcommand{\AA}{\mathbb{A}}
\newcommand{\DD}{\mathbb{D}}
\newcommand{\id}{\operatorname{id}}
\newcommand{\interior}{\operatorname{int}}
\newcommand{\N}{\mathbb{N}}
\newcommand{\Z}{\mathbb{Z}}
\newcommand{\R}{\mathbb{R}}
\newcommand{\C}{\ensuremath{\mathbb{C}}}
\newcommand{\Ch}{\hat{\mathbb{C}}}
\newcommand{\D}{\mathbb{D}}
\newcommand{\eps}{\ensuremath{\varepsilon}}
\renewcommand{\theta}{\vartheta}
\renewcommand{\phi}{\varphi}
\newcommand{\Log}{\operatorname{Log}}
\newcommand{\dist}{\operatorname{dist}}
\newcommand{\diam}{\operatorname{diam}}
\newcommand*{\defeq}{\mathrel{\vcenter{\baselineskip0.5ex \lineskiplimit0pt
                     \hbox{\scriptsize.}\hbox{\scriptsize.}}}%
                     =}
\title[Non-compact surfaces are equilaterally triangulable]{Non-compact Riemann surfaces \\
   are equilaterally triangulable}
\begin{document} 

\begin{abstract}
 We show that every open Riemann surface $X$ can be obtained by glueing 
  together a countable collection of equilateral triangles, in such a way that 
  every vertex belongs to finitely many triangles. Equivalently, $X$
  is a \emph{Belyi surface}: There exists a holomorphic 
    branched covering $f\colon X\to\Ch$ that is branched only over
   $-1$, $1$ and $\infty$. 
  It follows that every Riemann surface is a branched cover of the sphere, branched
 only over finitely many points. 
\end{abstract}

%% Text-only abstract:
% We show that every open Riemann surface can be obtained by glueing together a countable collection of equilateral triangles, in such a way that every vertex belongs to finitely many triangles. Equivalently, it is a _Belyi surface_: There exists a holomorphic branched covering to the Riemann sphere that is branched only over three values. It follows that every Riemann surface is a branched cover of the sphere, branched only over finitely many points. 

\author[Christopher J.\ Bishop]{Christopher J.\ Bishop\orcidlink{0000-0002-8459-5448}}
\address{Stony Brook University \\ Stony Brook, NY, 11790 \\ USA}
\email{bishop@math.sunysb.edu}
\author[Lasse Rempe]{Lasse Rempe\orcidlink{0000-0001-8032-8580}} 
\address{\noindent Dept. of Mathematics \\ The University of Manchester \\ Manchester \\ M13 9PL \\ UK}
\email{lasse.rempe@manchester.ac.uk}
\subjclass[2020]{Primary 30F20; Secondary 14H57, 30D05, 30F60, 32G15, 37F10, 37F31.}

\maketitle

\section{Introduction}

This article considers the following question: which Riemann surfaces 
 can be built from equilateral triangles? Among \emph{compact}
  surfaces, only countably many surfaces can be obtained in this
  manner. These are characterized by Belyi's theorem in terms of algebraic
  number theory, and have been the subject of intense investigation
  for over forty years. In contrast, we will prove that every non-compact
  surface can be constructed by gluing together countably many equilateral
  triangles (see Theorem~\ref{thm:triangulation} below). The question was initially motivated by the 
  technique of \emph{quasiconformal folding} recently developed by 
  the first author \cite{BishopFolding}, but 
  our proof is self-contained and introduces several novel ideas and techniques.
  These allow greater control on the surfaces and associated functions constructed,
  and are required to overcome significant new difficulties that arise. 
  
  A consequence (Corollary~\ref{cor:branchedcover}) of our main result is that every Riemann surface
   is a branched cover of the Riemann sphere, branched over a finite 
   number of points (this follows from the Riemann--Roch  theorem for
   compact surfaces). Our construction also gives rise to new examples of
   finite-type  holomorphic dynamical systems, generalising known examples
   of maps  from  elliptic or parabolic surfaces  to the sphere (i.e., 
   rational maps on the sphere, and transcendental meromorphic dynamics 
   on the plane or once punctured plane).  These and other implications 
   are discussed at the end of the introduction. 
   
 To state our results formally, let us begin with the definition of a equilateral
   triangulation of a Riemann surface. Let $T$ be a closed Euclidean
   equilateral triangle.
  Starting from 
  either a finite even number or a countably infinite number of copies of 
   $T$, glue these triangles together by identifying every edge with exactly
     one edge of another triangle, in such a way that the identification map is the restriction of an orientation-reversing
   symmetry of $T$.
   Assume furthermore that
   the
   resulting space $E$ is connected, 
   and that any vertex is identified with
   only finitely many other vertices; see Figure~\ref{fig:snowsphere}. 
   Then $E$ is an orientable topological surface, which is 
   compact if and only if the number of triangles we started with was finite. 
   We say that $E$ is an \emph{equilateral surface}.
   
   Every equilateral surface comes equipped with a Riemann surface structure: On 
   the interior of a 
   face or of an edge, the complex structure is inherited from $T$. It is easy to see that
   each vertex is conformally a puncture, and therefore the complex structure
   extends to all of $E$; indeed, local charts can be defined by using appropriate
   power maps. (Recall that 
   every vertex lies on the boundary of some finite number of faces, which are necessarily arranged
   cyclically around it.)
   We say that a Riemann surface is \emph{equilaterally triangulable} if it is 
   conformally equivalent to an equilateral surface; 
   compare \cite{voevodskiishabat} and Section~\ref{sec:belyibackground}. 
 
 \begin{figure}
  \includegraphics[height=.25\textheight]{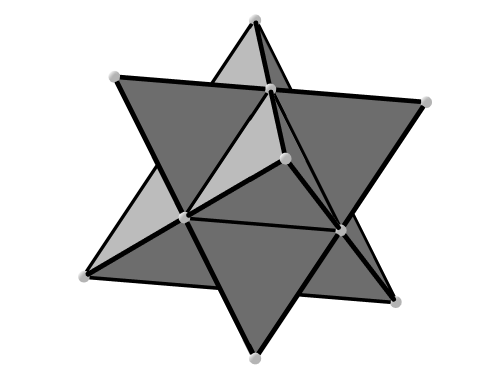}\hfill%
  \includegraphics[height=.25\textheight]{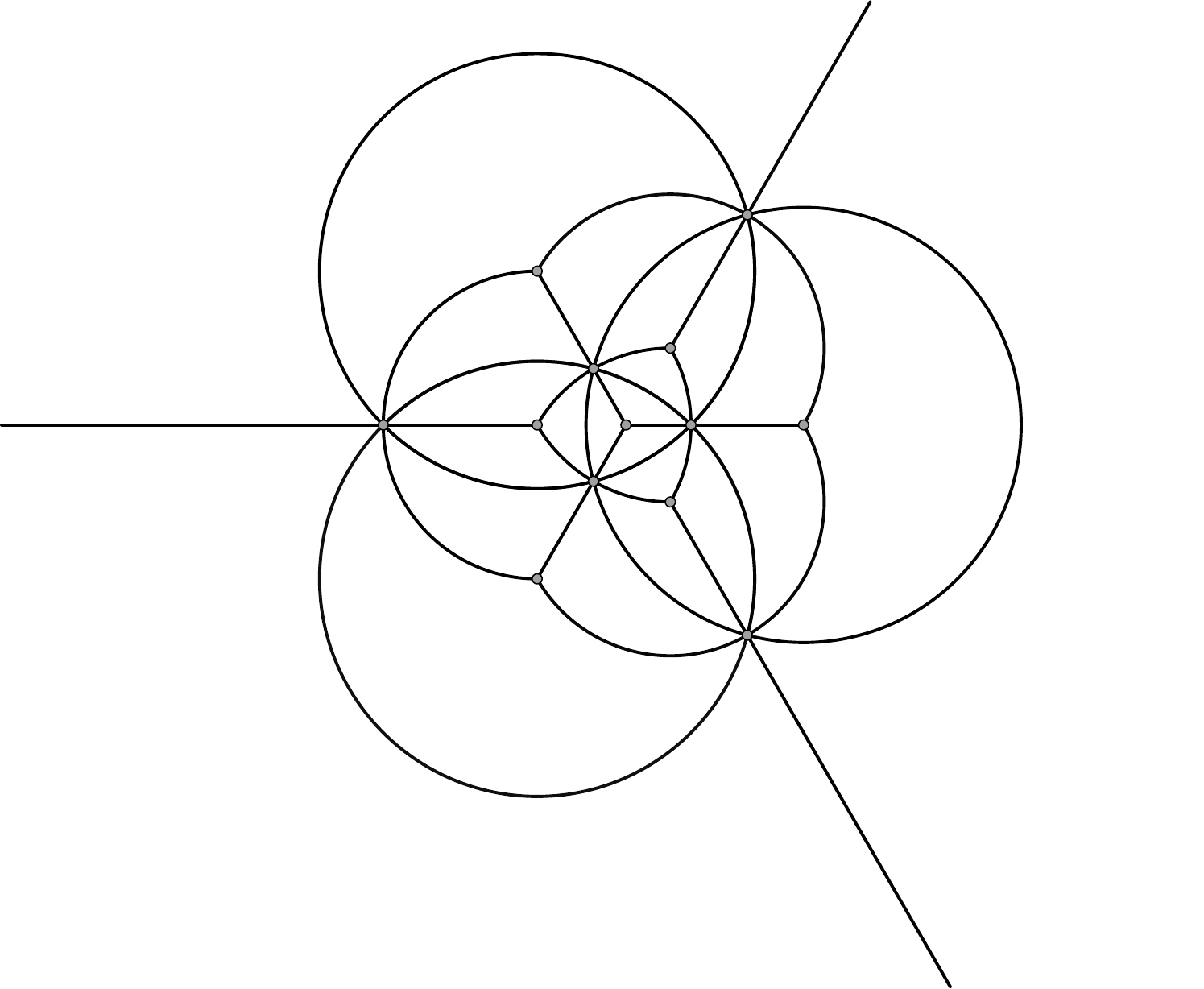}
  \caption{\label{fig:snowsphere}An equilateral surface of genus $0$, built from 24 
    equilateral triangles, and the corresponding triangulation of the Riemann sphere.}
 \end{figure}
 
   \begin{question}\label{question:belyi}
     Which Riemann surfaces are equilaterally triangulable? 
   \end{question}
   We emphasise that Question~\ref{question:belyi} concerns conformal rather than
     metric structures. That is, a conformal isomorphism from 
     a given Riemann surface $X$ to 
     an equilateral surface $E$ 
     induces 
     a flat metric on $X$ having
     isolated cone singularities; different triangulations will lead to
     different metrics. Question~\ref{question:belyi} asks whether $X$ supports
     \emph{any} such \emph{equilateral triangulation}. 
     
  There are only countably many constellations in which one may glue finitely many triangles together.
    So 
     there are only countably many compact equilateral surfaces; therefore
   most compact Riemann
   surfaces 
   can \emph{not} be equilaterally triangulated. 
    The first explicit mention of equilateral triangulations on compact surfaces in the literature
    of which we are aware is in the context of string theory~\cite{boulatovetal}. 
    In response to~\cite{boulatovetal}, and making use of
    ideas from Grothendieck's 1984 ``Esquisse d'un programme'' \cite{grothendieck} relating
    to work of Belyi~\cite{belyi},
    Shabat and Voevodskii~\cite{voevodskiishabat} point out  that
    $X$ is equilaterally triangulable if and only
   if there exists
    a \emph{Belyi function} $f\colon X\to \Ch$; that is, a meromorphic function
   whose only critical values are $-1$, $1$ and $\infty$.\footnote{%
    Often, the values $0$, $1$, and $\infty$ are used in the definition
      of Belyi functions, but our choice turns out to 
    be more convenient for explicit formulae. Either normalisation can be obtained
    from the other by postcomposition with a complex affine % M{\"o}bius
    transformation.}
     Compare Proposition~\ref{prop:equivalence}.
    
  Such a surface is called a \emph{Belyi surface}. 
    Belyi's theorem~\cite[Theorem~4]{belyi}, see also~\cite{belyi2}, 
   states that $X$ is a Belyi surface if and only if $X$ is defined over a number field. 
   (That is, $X$ can be represented as a smooth projective variety, defined by
   equations with algebraic coefficients.) In particular, this classical theorem
   gives a complete answer to Question~\ref{question:belyi} in the compact case. 
   Belyi functions on compact surfaces are the subject of intense research, 
   particularly in connection with Grothendieck's programme for studying the
   absolute Galois group. Compare~\cite{schnepscollection,landozvonkin,joneswolfart}. 

 It seems  natural to study Question~\ref{question:belyi} also for non-compact 
    surfaces. See
     below for motivations of this problem from complex dynamics,
    in terms of the existence of \emph{finite-type maps}, and from the point of
    view of \emph{conformal tilings}. 
    The answer is  trivial in the case of
     the Euclidean or hyperbolic plane or
     the bi-infinite cylinder. Indeed,
      the plane can be tesselated using equilateral triangles; since this tesselation
      is periodic, it also provides a tesselation of the cylinder $\C/\Z$. Equilateral
      triangulations of the hyperbolic plane are provided by the classical hyperbolic
      triangle groups. Furthermore, it is not difficult to obtain equilaterally 
      triangulated surfaces that are conformally equivalent to the three-punctured sphere
       or the once-punctured disc; 
      see Section~\ref{sec:belyibackground} and Figure~\ref{fig:trivialcases}.

   Every Riemann surface $X$ is triangulable by Rad\'o's theorem
    \cite{Rado1925}. Replacing each element of the triangulation by an equilateral triangle,
    we see that there is  an equilaterally triangulable surface topologically equivalent
    to $X$. However, in general the two surfaces are not \emph{conformally}
    equivalent. Indeed, Riemann surfaces are arranged in \emph{moduli spaces},
    which are nontrivial real or complex manifolds except in the finitely many 
     cases mentioned above. 
     The simplest  examples of non-trivial moduli spaces of non-compact surfaces
     are provided by
     round annuli $\{1<\lvert z\rvert < R\}$, which form a real one-dimensional
      family parameterised by $R\in(1,\infty)$, and 
      four-punctured spheres, which are organised in a one-complex-dimensional moduli 
      space, locally parameterised by the cross ratio of their punctures.  
     As far as we are aware, 
       Question~\ref{question:belyi} is open even for these two
     simple cases. We give a complete answer for all
     non-compact surfaces, which shows that this case differs fundamentally 
     from that of compact Belyi surfaces.

 \begin{thm}\label{thm:triangulation}
   Every non-compact Riemann surface is equilaterally 
     triangulable. 
 \end{thm}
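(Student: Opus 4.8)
The plan is to construct a Belyi function on $X$ directly; by Proposition~\ref{prop:equivalence} this suffices. The starting point is the Behnke--Stein theorem, which guarantees that $X$ is a Stein manifold; a standard consequence (embed $X$ as a closed complex submanifold of $\C^3$ and compose with a generic linear projection) is that $X$ admits a \emph{proper} holomorphic map $g\colon X\to\C$. Such a $g$ is a branched covering of $\C$ of some finite degree, but its set $C\subset\C$ of critical values may be infinite --- and must be, for instance whenever $X$ has infinite genus, by a Riemann--Hurwitz count. The map $g$ is also unusable as it stands, since a proper holomorphic map to $\C$ tends to $\infty$ at the boundary of $X$ and so violates the no-removable-singularities requirement. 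Both defects will be cured at once by post-composing $g$ with a suitable transcendental entire function $\psi\colon\C\to\C$.

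What is needed is a \emph{finite-type} (Speiser-class) entire function $\psi$ all of whose singular values lie in $\{-1,1\}$ --- so that $\psi$ restricts to a covering map over $\C\setminus\{-1,1\}$, has $\infty$ as its only asymptotic value, and has an essential singularity at $\infty$; the prototype is $\psi=\cos$ --- such that, moreover, the given set $C$ is contained in the critical fibre $\psi^{-1}(\{-1,1\})$. Granting such a $\psi$, put $f\defeq\psi\circ g\colon X\to\Ch$. Then $f$ is holomorphic without poles; its critical values are those of $\psi$ (all attained, as $g$ is onto $\C$) together with $\psi(C)$, and the condition $C\subseteq\psi^{-1}(\{-1,1\})$ places all of them in $\{-1,1\}$, while $\infty$ remains the only asymptotic value. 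Hence over $\Ch\setminus\{-1,1,\infty\}=\C\setminus\{-1,1\}$ the map $f$ is a covering, being a composition of the coverings $g$ and $\psi$ (again using $C\subseteq\psi^{-1}(\{-1,1\})$); and since $g$ is proper, $g\to\infty$ at the boundary of $X$, so that $f=\psi\circ g$ has no removable singularity at any boundary point, $\psi$ having an essential singularity at $\infty$. Thus $f$ is a Belyi function and $X$ is equilaterally triangulable.

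Everything therefore reduces to constructing an entire function with singular values $\{-1,1\}$ whose critical fibre over $\{-1,1\}$ contains a \emph{prescribed} discrete set $C\subset\C$. This is the crux, and as there is no formula for infinite $C$, one must build $\psi$ by Bishop's quasiconformal folding. One first chooses an infinite tree $\Gamma\subset\C$ passing through the points of $C$, subdividing over-long edges into odd numbers of pieces so as to satisfy the bounded-geometry hypotheses of the folding theorem (the auxiliary vertices so introduced contribute only harmless additional critical points over $\pm1$), together with a $2$-colouring of its vertices. Folding produces a quasiregular model, which is straightened to a holomorphic $\psi$ by a quasiconformal correction $\phi$ close to the identity; this gives $\psi^{-1}(\{-1,1\})=\phi(V(\Gamma))$, which is close to, but not equal to, $C$. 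To make the critical fibre actually contain $C$ one iterates: at each stage one re-routes the tree through $\phi^{-1}(C)$ and re-folds, and since the corrections are small and supported far out, the iteration converges. Verifying the bounded-geometry hypotheses for a tree adapted to an \emph{arbitrary} discrete target $C$, and the convergence of this iteration, is where the genuine difficulty lies; the remaining steps are routine. One could instead attempt to fold directly on a triangulation of $X$, bypassing $\C$ altogether, but this would require a folding theorem for maps out of general open Riemann surfaces in place of the classical planar one.
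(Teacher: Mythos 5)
Your strategy fails at the very first step. You claim that a ``standard consequence'' of Behnke--Stein is that $X$ admits a proper holomorphic map $g\colon X\to\C$, obtained by projecting a proper embedding $X\hookrightarrow\C^3$ to a generic line. But projecting a closed one-dimensional submanifold of $\C^3$ to a line is generically quasi-finite, not proper, and in fact many open Riemann surfaces admit \emph{no} proper nonconstant holomorphic map to $\C$ at all. The unit disc $\D$ is already a counterexample: if $g\colon\D\to\C$ were proper it would be a finite-degree branched covering, so $g^{-1}(0)$ would be a finite set, contained in $\{|z|\le r_0\}$ for some $r_0<1$. On the annulus $\{r_0<|z|<1\}$ the function $h=1/g$ would then be holomorphic, bounded, and tending uniformly to $0$ as $|z|\to1$ (by properness). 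Schwarz reflection across $|z|=1$ extends $h$ holomorphically to a larger annulus in which $h$ vanishes on the whole circle $|z|=1$, forcing $h\equiv0$ --- a contradiction. The paper's Remark~1 after Corollary~\ref{cor:branchedcover} already flags the closely related obstruction: the Gunning--Narasimhan immersion $X\to\C$ can never be a covering for $X\ne\C$. So the reduction to a planar problem never gets started.

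Even granting a proper $g$, the composition $f=\psi\circ g$ is not a Belyi function in the paper's sense. For any transcendental entire $\psi$, the point $\infty$ is an \emph{asymptotic} value, not a branch point: over a neighbourhood $U$ of $\infty$ the preimage $\psi^{-1}(U)$ contains logarithmic tracts on which $\psi$ is an infinite-degree covering, not a proper map. The paper's definition requires a Belyi function to be a \emph{branched covering}, i.e.\ locally proper over every $w\in\Ch$ including $w=\infty$; your $f$ inherits the asymptotic value of $\psi$ and therefore fails this. Equivalently, the lift to $X$ of the two-triangle decomposition of $\Ch$ with vertices $-1,1,\infty$ has unbounded faces accumulating on $\partial X$ rather than closed topological triangles, so it does not yield an equilateral triangulation. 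This is intrinsic to the approach: any entire function that is a branched covering $\C\to\Ch$ must be a polynomial, and a polynomial has only finitely many preimages of $\{-1,1\}$, so it cannot absorb an infinite critical set $C$.

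Beyond these two structural problems, the construction of a Speiser-class $\psi$ with $C\subset\psi^{-1}(\{-1,1\})$ for an arbitrary discrete $C$ is only sketched, and the convergence of the proposed re-fold-and-correct iteration is asserted rather than proved; controlling how each straightening perturbs the target set is precisely where the real difficulty lies. The paper's proof avoids all of this by working directly on $X$: it decomposes $X$ into analytically bounded finite pieces (Proposition~\ref{prop:decomposition}), produces an almost-equilateral triangulation of each hemmed piece with bounded geometry (Proposition~\ref{prop:hemmedtriangulation}), and absorbs each quasiconformal correction by re-embedding the corrected piece into $X$ via a compactness/Teichm\"uller argument that crucially uses the finite-dimensionality of the relevant Teichm\"uller spaces (Proposition~\ref{prop:straightening}), with parameters chosen carefully enough for the induction to converge.
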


As with compact surfaces, we can rephrase equilateral triangulability
   in terms of Belyi functions. 
        
 \begin{defn}
  Let $X$ be a (compact or non-compact) Riemann surface. 
  A meromorphic function $f\colon X\to\Ch$ is a 
   \emph{Belyi function} if $f$ is a branched covering whose
   branched points lie only over $-1$, $1$ and $\infty$.
\end{defn}
\begin{remark}[Remark~1]
   Here $f$ is called a \emph{branched covering} if every point 
     $w\in\Ch$ has a simply connected neighborhood $U$ such that each connected component
    $V$ of $f^{-1}(U)$ is simply connected and $f\colon V\to U$ is a proper map topologically
     equivalent to $z\mapsto z^d$ for some $d\geq 1$.
    
  Observe that, by definition of a branched covering $f\colon X\to \Ch$,
    $X$ is the natural domain of $f$. That is, 
    there is no Riemann surface $Y\supsetneq X$ such that 
    $f$ extends to a holomorphic function $\tilde{f}$ on~$Y$. 
    Indeed, otherwise let
    $z$ belong to the relative boundary of $X$ in $Y$ and set 
      $w\defeq \tilde{f}(z)$. If $U$ is a small neighbourhood of $w$, then there is
      a connected component $V$ of $f^{-1}(U)$ such that $f\colon V\to U$ is not onto, and in particular not proper.
\end{remark}
\begin{remark}[Remark 2]
  The Belyi functions $f\colon \C\to\Ch$ are precisely the transcendental meromorphic functions
   with three critical values and no asymptotic values. See~\cite{langleycriticalvalues} and~\cite{eremenkobelyi} 
   for a discussion of the function-theoretic properties of these functions.
\end{remark}

 The following is an equivalent formulation of Theorem~\ref{thm:triangulation}; see Proposition~\ref{prop:equivalence}. 

 \begin{thm}\label{thm:belyi} Every non-compact Riemann surface supports a Belyi function. 
 \end{thm}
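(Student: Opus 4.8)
The plan is to construct, for an arbitrary open Riemann surface $X$, a Belyi function $f\colon X\to\Ch$ directly, exploiting the flexibility afforded by non-compactness. The starting point is that every open Riemann surface is a Stein manifold, hence admits a proper holomorphic map $g\colon X\to\C$ (indeed, by a theorem of Gunning--Narasimhan, even a holomorphic immersion, but a proper branched map already suffices). Such a $g$ is automatically a branched covering onto $\C$ with a discrete set of critical values. Thus $X$ is already realized as a branched cover of $\Ch$ branched over a closed discrete subset $A\subset\C$ together with possibly $\infty$. The task is then to ``push'' this branching down to three points. This is a transcendental analogue of the classical Belyi argument: in Belyi's theorem one postcomposes a rational map with cleverly chosen polynomials to reduce a finite critical set to $\{-1,1,\infty\}$. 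Here the critical set $A\cup\{\infty\}$ is infinite, so one cannot hope to use a polynomial; instead the idea is to find an entire (or meromorphic) function $\varphi\colon\Ch\to\Ch$, i.e.\ a transcendental Belyi-type function, such that $\varphi\circ g$ has critical values only over $-1,1,\infty$.

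Concretely, the key intermediate step is a \emph{transcendental Belyi lemma}: given any closed discrete set $A\subset\C$, there exists a meromorphic function $\varphi\colon\C\to\Ch$ whose critical values lie in $\{-1,1,\infty\}$ and which, in addition, maps $A$ into $\{-1,1,\infty\}$ and is \emph{locally injective at each point of $A$} (so that postcomposition creates no new critical points over $A$ beyond those already allowed). One way to build such a $\varphi$ is to prescribe its behaviour as a branched cover: specify a triangulation of $\Ch\setminus\{-1,1,\infty\}$ pulled back appropriately, using the theory of \emph{line complexes} / \emph{speiser graphs} or, more robustly, quasiconformal surgery. That is, first construct a \emph{quasiregular} model $\psi$ with the desired combinatorial branching structure over $\{-1,1,\infty\}$ and with $\psi(A)\subset\{-1,1,\infty\}$, arrange (using that $A$ is discrete, so locally finite) that the dilatation of $\psi$ is supported off a neighbourhood of $A$ and has controlled norm, then invoke the measurable Riemann mapping theorem to obtain a quasiconformal $\phi$ with $\varphi=\psi\circ\phi^{-1}$ holomorphic. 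The flexibility of the construction on $\Ch$ (no moduli constraints once we allow arbitrarily many triangles) is exactly what makes this possible, paralleling the ``true form of a tree'' / true form of a graph techniques.

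Carrying this out, one then sets $f\defeq\varphi\circ g$ and checks: (i) the critical values of $f$ are contained in the critical values of $\varphi$ together with $\varphi$ applied to the critical values of $g$, i.e.\ in $\{-1,1,\infty\}\cup\varphi(A)\subseteq\{-1,1,\infty\}$; (ii) $f$ is a branched covering in the sense of the definition above, which follows because a composition of a proper branched covering $g\colon X\to\C$ with a branched covering $\varphi\colon\C\to\Ch$ is again a branched covering — here properness of $g$ is essential to rule out removable singularities at the ideal boundary of $X$, guaranteeing that $X$ is the natural domain of $f$; (iii) no branching is lost or gained illegitimately at points of $X$ lying over $A$, which is where the local injectivity of $\varphi$ on $A$ is used. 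By Proposition~\ref{prop:equivalence}, the existence of such an $f$ is equivalent to $X$ being equilaterally triangulable, giving both Theorem~\ref{thm:belyi} and Theorem~\ref{thm:triangulation}.

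The main obstacle is the transcendental Belyi lemma: producing a meromorphic $\varphi\colon\C\to\Ch$ with only three critical values \emph{and} prescribed values (with local injectivity) along an arbitrary infinite discrete set $A$. Ensuring simultaneously the three-point branching condition globally, the constraint $\varphi(A)\subset\{-1,1,\infty\}$, and local injectivity at $A$ — while keeping the quasiconformal dilatation integrable so that the measurable Riemann mapping theorem applies on $\C$ (or $\Ch$) — requires a careful combinatorial/geometric model, presumably built triangle-by-triangle on an exhaustion of $\C$, with the triangulation refined near $A$ so that each point of $A$ sits at a vertex or edge-midpoint where the local degree is $1$. Managing the bookkeeping of this infinite gluing, and verifying that the resulting $\varphi$ is a genuine branched covering of $\Ch$ with the right singular set, is the technical heart of the argument; everything else (Stein theory giving $g$, stability of branched coverings under composition, properness excluding boundary removable singularities) is comparatively standard.
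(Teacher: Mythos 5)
Your reduction has a fatal flaw at the very first step, and a second serious gap at the heart of the argument.

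\textbf{The proper map $g\colon X\to\C$ does not exist in general.} A proper holomorphic map between Riemann surfaces is automatically a branched covering of finite degree: the fibre $g^{-1}(w)$ is compact and discrete, hence finite, and the degree is locally constant. A finite-degree branched cover of $\C$ with discrete branch locus is, by Riemann--Hurwitz and by lifting the end of $\C$ at infinity, a compact Riemann surface with finitely many points removed. Thus $X$ admits a proper holomorphic map to $\C$ only when $X$ is a finitely-punctured compact surface. The disc $\D$ already fails: any proper $g\colon \D\to\C$ would exhibit $\D$ as a finite branched cover of $\C$, forcing $\D$ to be a punctured compact surface, which it is not (its ideal boundary is a circle, not a finite set). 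Infinite-genus surfaces, round annuli, and any surface with a nondegenerate boundary component fail for the same reason. (The Gunning--Narasimhan immersion you allude to is not a remedy: as noted in Remark~1 after Corollary~\ref{cor:branchedcover}, that immersion is never a branched covering for $X\neq\C$, precisely because its inverse has transcendental singularities. The parenthetical ``but a proper branched map already suffices'' is asserting existence of an object that typically does not exist.)

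\textbf{The ``transcendental Belyi lemma'' is also false as stated.} Suppose for the moment that $g\colon X\to\C$ were available and that $\varphi\colon\C\to\Ch$ had singular values in $\{-1,1,\infty\}$ but was not a branched covering in the sense of the paper's definition, i.e.\ it had asymptotic values. Then, since $g$ is proper and surjective, any path to infinity in $\C$ along which $\varphi$ tends to an asymptotic value lifts to a path in $X$ leaving every compact set, so $f=\varphi\circ g$ inherits the same transcendental singularity. Hence $f$ is not a branched covering of $\Ch$ and not a Belyi function: you genuinely need $\varphi$ to be a Belyi function on $\C$. But by Proposition~\ref{prop:equivalence} and Remark~\ref{rmk:rigidity}, a Belyi function on $\C$ corresponds to an equilateral triangulation of $\C$, which is determined by any single one of its triangles via reflection. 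The vertex set $\varphi^{-1}(\{-1,1,\infty\})$ is therefore a rigid discrete set (in the basic case, a similarity image of the triangular lattice), with at most a finite-dimensional family of deformations. There is no way to choose $\varphi$ so that an \emph{arbitrary} closed discrete $A\subset\C$ satisfies $A\subset\varphi^{-1}(\{-1,1,\infty\})$; the quasiconformal surgery you propose would only produce $\varphi(\phi(A))\subset\{-1,1,\infty\}$ for the qc correction $\phi$, and controlling $\phi$ to fix an infinite prescribed set is exactly the difficulty the paper spends Sections~\ref{sec:teichmueller}--4 overcoming, and it does so only piece-by-piece on compact subsets, not globally on $\C$.

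The paper's actual route sidesteps both problems: it decomposes $X$ into analytically bounded finite pieces (Proposition~\ref{prop:decomposition}), builds on each piece a triangulation that is equilateral after a small quasiconformal correction (Proposition~\ref{prop:hemmedtriangulation}), and then uses Teichm\"uller-theoretic compactness and area-distortion estimates (Proposition~\ref{prop:straightening}) to re-embed the corrected piece in $X$ without disturbing the boundary data. The flexibility comes from having uncountably many choices of boundary subdivision degrees $d^\gamma$ at each stage, not from precomposition with a global covering. If you want to salvage a composition-style argument, you would need a surface with the topology of $X$ that both carries an equilateral triangulation and proper-branched-covers $\C$ (or $\Ch$), and as above such surfaces are a thin subclass.
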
       
 
 It is a consequence of the classical Riemann--Roch theorem that every compact 
  Riemann surface is a branched cover of the Riemann sphere, branched over
  finitely many points. Hence Theorem~\ref{thm:belyi} implies a new result for all Riemann surfaces.
  
  \begin{cor}\label{cor:branchedcover}
   Every Riemann surface is a branched cover of the sphere with only finitely
   many branched values.  
  \end{cor}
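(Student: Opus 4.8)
The plan is to split into two cases according to whether the surface is compact or not, and in each case to exhibit a meromorphic function that is a branched covering of $\Ch$ with only finitely many branched values.

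First, suppose $X$ is non-compact. Then Theorem~\ref{thm:belyi} directly supplies a Belyi function $f\colon X\to\Ch$. By definition this is a branched covering whose branched points lie only over $-1$, $1$ and $\infty$, so $f$ has at most three branched values and there is nothing more to do in this case.

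Next, suppose $X$ is compact. Here I would invoke the classical Riemann--Roch theorem — or, more elementarily, the existence of a non-constant meromorphic function on any compact Riemann surface — to obtain a non-constant holomorphic map $f\colon X\to\Ch$. Since $X$ is compact and $f$ is non-constant, $f$ is proper and of finite degree, and hence a branched covering in the precise sense used in the remark after the definition of Belyi functions: given $w\in\Ch$, choose a simply connected neighbourhood $U$ of $w$ so small that $U$ contains no critical value of $f$ other than possibly $w$ itself; then each connected component of $f^{-1}(U)$ is simply connected and maps onto $U$ by a proper map modelled on $z\mapsto z^{d}$. The critical set of $f$ is a discrete subset of the compact surface $X$, hence finite, so the set of branched values of $f$ is finite as well. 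Combining the two cases proves the corollary.

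The only point that genuinely requires attention is the verification that the Riemann--Roch function is a branched covering in the exact sense demanded by the definition above; but this is a standard property of proper non-constant holomorphic maps between compact Riemann surfaces, so no essential obstacle arises. In effect, all the substance of the corollary for previously unknown cases is carried by the non-compact case, i.e.\ by Theorem~\ref{thm:belyi}, which moreover yields not just finitely many branched values but the optimal three.
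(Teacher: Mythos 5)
Your proof is correct and follows exactly the route the paper takes: the non-compact case is handled by Theorem~\ref{thm:belyi}, and the compact case by invoking Riemann--Roch to produce a non-constant meromorphic function, which is automatically a finite-degree branched covering. The extra care you take in verifying the branched-covering property of the Riemann--Roch function is sound but, as you note, standard.
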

\begin{remark}[Remark 1]
  Gunning and Narasimhan~\cite{gunningnarasimhan} proved that
  every open Riemann surface $X$ admits a holomorphic
  immersion into the complex plane. That is, there exists a holomorphic 
  mapping $f\colon X\to\C$ which is a local homeomorphism. However, this function cannot be a covering map if $X\neq \C$; so the inverse $f^{-1}$ 
  necessarily has some, and potentially
  infinitely many, transcendental singularities in $\C$. 
	In particular, such $f$ is not a branched covering.
\end{remark}
\begin{remark}[Remark 2]
  For a general compact Riemann surface $X$ of genus $g\geq 2$, the minimal number of branched values required in the theorem is $3g$. Indeed, 
  the moduli space of $X$ has complex dimension $3g-3$. The subset consisting of those surfaces for which there
  is a branched cover branched over only $B\geq 3$ values is a countable union of submanifolds of dimension at most $B-3$. Thus, for a general surface $X$, 
  the number of branched values in Corollary~\ref{cor:branchedcover} is at least $B=3g$. On the other hand, if $X$ is any surface of genus $g$,
   and $P$ is a Weierstrass point
  of $X$, then there is a function $f\colon X\to\Ch$ having a single pole at $P$ of degree at most $g$. By the Riemann--Hurwitz formula, $f$ has at most 
    $3g-1$ finite critical values, and hence $3g$ critical values in total. (We thank Alex Eremenko for pointing out this argument.) 
    For $g=1$, the moduli space is one-dimensional, so we need at least $B=4$ critical values in general; this is achieved by the Weierstrass 
   $\wp$-function. 
   On the
   other hand, Theorem~\ref{thm:belyi} shows that $B=3$ always suffices for \emph{non-compact} $X$.
\end{remark}
  Theorems \ref{thm:triangulation} and~\ref{thm:belyi} may 
   seem surprising since the 
   function $f$ is determined by an underlying equilateral triangulation, 
   which is described by an infinite abstract 
   graph on the surface, a discrete and non-flexible 
   object. In contrast, 
   Riemann surfaces are parameterised by complex manifolds, so 
   the triangulation in Theorem~\ref{thm:triangulation}
      and the Belyi function $f$ in Theorem~\ref{thm:belyi} 
      cannot depend continuously on $X$ as it
      varies in a given moduli
   space. A similar phenomenon appears in the 
   setting of \emph{circle packings}: Every non-compact Riemann surface of finite conformal type (see Section~\ref{sec:belyibackground})
   can be filled by a circle packing \cite{noncompactpackings}. Here a circle packing 
   is a locally finite collection of circles whose tangency graph is a triangulation, and again 
   this
   tangency graph completely determines the surface. 
    However, despite the similarity of statements,
    the techniques used in \cite{noncompactpackings} have no obvious counterpart in the 
    setting of equilateral surfaces.
    Indeed, \cite[Section~3]{noncompactpackings} discusses how one may
    modify an existing partial packing to a full packing by replacing only one 
    of the circles by another chain of circles. On the other hand, 
    an equilateral triangulation is uniquely determined by any one of its
    triangles; see Remark~\ref{rmk:rigidity}. 

  There is a long history of constructing functions with finitely many singular values
   using quasiconformal mappings. See \cite{wittich} and~\cite[Chapter~7]{goldbergostrovskii};
    for a modern example, compare 
    Bergweiler and Eremenko~\cite{BergweilerEremenkoQCSurgery}. The control of the
    geometric behaviour of the resulting functions that can be achieved with
    classical methods is limited, but recently the first author introduced the
    concept of \emph{quasiconformal folding}~\cite{BishopFolding}. This technique allows
    the very flexible construction of functions with finitely many singular values and
    prescribed behaviour. 
     It has subsequently been used by authors including
     Fagella, Godillon and Jarque~\cite{MR3339086}, Lazebnik~\cite{MR3579902},   
     Osborne and Sixsmith~\cite{MR3525384}, and the second author~\cite{arc-like}  
     to construct examples  in transcendental dynamics on the plane.
     Compare Mart\'i-Pete and Shishikura~\cite{martipeteshishikura} for a related construction
     that does not use quasiconformal folding. 
   
While quasiconformal folding has been applied mostly to construct entire functions $f\colon \C\to\C$, 
it also allows the construction of meromorphic functions on more general
Riemann surfaces. More precisely, given any Riemann surface  $X$
(compact or not), quasiconformal folding allows one to construct
a \emph{quasiregular} map $f\colon X\to \Ch$ that is branched only over 
$-1$, $1$ and $\infty$. Moreover, $f$ can be chosen to be ``almost'' holomorphic
(more formally, its maximal dilatation is bounded by a uniform
constant and supported on a 
subset of $X$ of arbitrarily small area). It follows that there is a Belyi function
on a surface $\tilde{X}$ \emph{close} to $X$, establishing that 
equilaterally triangulable surfaces are dense in every moduli space;  
compare \cite[Section~15]{BishopFolding}. 
However, in general $\tilde{X}$ and $X$ have different complex structures.
        
Establishing Theorem~\ref{thm:belyi} hence
requires substantial 
new ideas, which can be outlined as follows. We begin by subdividing
$X$ into countably many pieces of finite topological type. We construct
a finite triangulation on the first such piece $S$ 
that is almost equilateral; more precisely, it becomes equilateral after a quasiconformal
change of the complex structure on $S$. By a careful analysis we see that
this change can be kept so small that the
new surface $\tilde{S}$ re-imbeds into $X$. This allows us to
continue with our construction. 
An additional subtlety arises from the fact that 
choices made at earlier stages of the construction will influence how
small we can 
keep our change in complex structure on subsequent pieces. 
It turns out that it is possible to control this
influence by choosing the equilateral triangulation on each $S$ carefully, together with
results on the area distortion under quasiconformal mappings. 
    
The partial equilateral triangulations could be
constructed by quasiconformal folding. Instead, we use a direct and more elementary
method~-- though still motivated by the ideas of~\cite{BishopFolding}~-- which 
has the additional advantage that the number of triangles meeting at a single point 
is bounded by a universal constant. In particular, we obtain the following strengthening
of Theorem~\ref{thm:belyi}. 
   
\begin{thm}\label{thm:boundeddegree}
 There is a universal constant $D$ such that the Belyi function in 
 Theorem~\ref{thm:belyi} can be chosen to have local degree $\leq D$ at every point. 
\end{thm}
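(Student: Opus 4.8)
The plan is to obtain Theorem~\ref{thm:boundeddegree} not as a separate result but as a feature of the proof of Theorem~\ref{thm:belyi}, once that proof is run through an explicit triangulation scheme in place of a black-box folding argument. The first step is a combinatorial translation. Let $E$ be an equilateral surface with associated Belyi function $f\colon E\to\Ch$, and colour each triangle of $E$ black or white according to whether $f$ carries it onto the component of $\Ch\setminus\{-1,1,\infty\}$ contained in the upper or in the lower half-plane; across every edge the colour flips. Then $f$ restricts to a homeomorphism on the interior of each face and each edge, while at a vertex $v$ of $E$---where the incident triangles alternate in colour and are therefore even in number, say $2k$---the map $f$ is, in suitable local coordinates, $z\mapsto z^{k}$. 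So $v$ lies over $-1$, $1$ or $\infty$ and $f$ has local degree $k$ there; consequently $f$ has local degree at most $D$ everywhere if and only if every vertex of $E$ is incident to at most $2D$ triangles. It therefore suffices to produce, for every non-compact $X$, an equilateral triangulation of a surface conformally equivalent to $X$ in which every vertex has valence bounded by a universal constant.

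The second step is to rerun the inductive construction behind Theorem~\ref{thm:belyi}: exhaust $X$ by subsurfaces of finite topological type, $X=\bigcup_{n\ge 1}S_n$, meeting along disjoint simple seam curves, triangulate $S_1$ by a finite almost-equilateral triangulation, apply a quasiconformal change of complex structure with dilatation supported on a set of small area---small enough that the modified piece re-imbeds into $X$---and continue. The extra requirement is that every finite triangulation be assembled from a fixed toolkit: cut each $S_n$ along disjoint simple closed curves into finitely many \emph{tiles}, each conformally one of a bounded list of shapes (say discs, annuli, and pairs of pants), and triangulate each tile by a model almost-equilateral triangulation drawn from a fixed finite family. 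The models must satisfy two properties: every vertex, whether interior or on the boundary of its tile, is incident to at most a universal number $V_{0}$ of triangles; and any prescribed subdivision of the boundary of a tile can be realised at the cost of inserting $O(\log)$ extra ``adapter'' layers of near-equilateral triangles, each layer of bounded valence and roughly doubling or halving the number of boundary edges. (The standard triangular-lattice pattern on an annulus of modulus near an admissible value $m\sqrt{3}/(2\ell)$ already has these features, its interior vertices having valence $6$.) Since the $S_n$, and the tiles within them, meet only along curves, every vertex of the assembled triangulation lies either in the interior of a single tile---and is then incident to at most $V_{0}$ triangles---or on a curve shared by exactly two tiles, in which case it is incident to at most $2V_{0}$.

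The genuine work, and the step I expect to be the main obstacle, is making these two paragraphs compatible: bounded valence must be maintained while the quasiconformal budget, already eroded by earlier stages, is kept solvent. The tension is that reconciling the triangulation of $S_{n+1}$ with the seam subdivision fixed when $S_n$ was triangulated requires an adapter collar near the seam, and this collar must be conformally thin so that it contributes negligible distortion. The resolution is that seam refinement carries no valence cost: when triangulating $S_n$ one may subdivide each seam curve into as many edges $\ell$ as desired---the adapter layers inside $S_n$ needed to reach so fine a subdivision are themselves thin, hence cheap in distortion---while an adapter collar joining an $\ell$-edge circle to an $\ell'$-edge circle over $O(\log)$ layers is built from triangles of side comparable to $1/\max(\ell,\ell')$, so its modulus can be driven below the stage-$(n+1)$ threshold by choosing $\ell$ large. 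Valence, a purely local count, is unaffected by the size of $\ell$, so one carries out the area-distortion estimates of Theorem~\ref{thm:belyi} verbatim while additionally recording that every vertex remains incident to at most $2V_{0}$ triangles throughout the induction. The outcome is an equilateral triangulation of a surface conformally equivalent to $X$ with all valences at most $2V_{0}$, where $V_{0}$ depends only on the fixed finite family of models; by the first paragraph the associated Belyi function then has local degree at most $D\defeq V_{0}$ everywhere, which is Theorem~\ref{thm:boundeddegree} with an explicit universal---indeed fairly small---constant $D$.
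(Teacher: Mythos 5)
Your opening paragraph is essentially the reduction the paper makes, with one clarification needed: the colouring and the claim that $f$ maps each face homeomorphically onto a half-plane presuppose that the triangulation of $E$ is the $3$-coloured one pulled back under $f$ from the division of $\Ch$ into upper and lower half-planes. For an arbitrary equilateral triangulation $\mathcal{T}$, the Belyi function constructed in Proposition~\ref{prop:equivalence} maps each face of $\mathcal{T}$ three-to-one onto $\Ch$; the $3$-coloured triangulation you describe is the barycentric subdivision of $\mathcal{T}$. Either description yields the same correct reduction: bounding the valence of the equilateral triangulation bounds the local degree of the associated Belyi function, with preimages of $-1$ and $\infty$ having degrees $2$ and $3$ automatically.

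The more substantial point is that you treat the bounded-degree statement as requiring a new construction layered over the proof of Theorem~\ref{thm:belyi}, and you identify the interaction between seam refinement and valence as ``the main obstacle''. In the paper that tension does not arise. Proposition~\ref{prop:hemmedtriangulation}\ref{item:angles} already guarantees that every vertex of the almost-equilateral triangulation of each hemmed piece has valence at most the universal constant $d_0$, \emph{independently} of the boundary subdivision numbers $d^{\gamma}$: the interior is covered by a fine triangular lattice (valence $6$) and the collars are triangulated with the angle bound $\theta_0$ from Proposition~\ref{prop:rectangle}, which caps valence by $\lfloor 2\pi/\theta_0\rfloor$. The glueing step of the inductive construction then keeps valences at most $2d_0-2$ (this is item~\ref{item:Eboundeddegree} in the proof of Theorem~\ref{thm:triangulation}), so Theorem~\ref{thm:boundeddegree} drops out of the existing proof with no additional bookkeeping.

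Your proposed replacement for Proposition~\ref{prop:hemmedtriangulation}---a fixed finite family of model triangulations of discs, annuli and pairs of pants, joined by $O(\log)$ adapter layers---has a genuine gap. Those tiles range over a continuum of conformal moduli, so no fixed finite family of near-equilateral models can realise them all up to uniformly bounded dilatation; the doubling/halving adapter layers address the combinatorics of the boundary subdivision but not the conformal modulus of the tile itself. What is required is precisely a bounded-angle triangulation scheme that adapts to arbitrary moduli while interpolating between bounded-geometry boundary partitions, which is what the fine-lattice-plus-Proposition-\ref{prop:rectangle} argument supplies. Without a result of this kind your outline does not close to a proof.
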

    
Our proof allows many choices at each stage of the inductive construction,
and hence even shows the existence of uncountably many different Belyi functions 
on $X$. We thus obtain a new 
characterisation of compact Riemann surfaces. 

\begin{cor}\label{cor:uncountable} A Riemann surface $X$ is compact 
if and only if supports at most countably many different Belyi functions,
up to pre-composition by conformal automorphisms. 
\end{cor}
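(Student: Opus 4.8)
The two implications call for entirely different arguments, and only the forward one is elementary. For ``$\Longrightarrow$'', suppose $X$ is compact. Then any Belyi function $f\colon X\to\Ch$ is a proper holomorphic map between compact surfaces, hence has a finite degree $d$, and presents $X$ as a connected branched covering of $\Ch$ of degree $d$ with branch values in $\{-1,1,\infty\}$. By the Riemann existence theorem such coverings are classified, up to isomorphism of coverings of $\Ch$, by the transitive homomorphisms $\pi_1(\Ch\setminus\{-1,1,\infty\})\to S_d$ modulo conjugacy; since this fundamental group is free of rank $2$, there are only finitely many of them for each $d$. An isomorphism between the coverings attached to Belyi functions $f_1$ and $f_2$ is precisely a $\phi\in\operatorname{Aut}(X)$ with $f_1=f_2\circ\phi$, so for each $d$ there are finitely many Belyi functions of degree $d$ on $X$ up to precomposition by $\operatorname{Aut}(X)$, and summing over $d\in\N$ leaves at most countably many in total. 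Here compactness is used only to force the degree to be finite.

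For ``$\Longleftarrow$'' I would exploit the freedom built into the proofs of Theorems~\ref{thm:belyi} and~\ref{thm:boundeddegree}. The key point is that a Belyi function $f$ is determined, up to precomposition by a conformal automorphism of its domain, by the locally finite ribbon graph $G_f\defeq f^{-1}([-1,1])$ (with all faces of finite degree, as in the definition of an equilateral surface): the domain and the function are recovered by glueing a pair of equilateral triangles across each edge according to the cyclic orders at the vertices. In particular, if $f_1=f_2\circ\phi$ with $\phi\in\operatorname{Aut}(X)$, then $\phi$ induces an orientation-preserving homeomorphism taking $G_{f_1}$ to $G_{f_2}$ and respecting the ribbon structure; thus two Belyi functions on $X$ are equivalent precisely when their ribbon graphs are isomorphic. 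Now let $X$ be non-compact. The construction runs through a countable exhaustion $X=\bigcup_n S_n$ and chooses, at each stage, an almost-equilateral triangulation of the new piece, with at least two admissible continuations available at every stage. I would arrange these continuations so that the choice $\epsilon_n\in\{0,1\}$ made at stage $n$ plants a finite ``tag'' in the triangulation: a bounded cluster of faces whose combinatorial type is fixed in advance, is distinct for distinct $n$, and occurs nowhere else in the construction. Carrying out the construction for each sequence $\epsilon\in\{0,1\}^{\N}$ then produces $2^{\aleph_0}$ Belyi functions $f_\epsilon$ on $X$ whose ribbon graphs contain the stage-$n$ tag exactly when $\epsilon_n=1$. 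Since a tag is detected by every ribbon-graph isomorphism, $G_{f_\epsilon}\cong G_{f_{\epsilon'}}$ forces $\epsilon=\epsilon'$, and hence the $f_\epsilon$ are pairwise inequivalent.

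The real work therefore lies in this construction step: one must check that the inductive scheme of Theorem~\ref{thm:belyi} genuinely admits, at each stage, at least two continuations differing by a localised change of triangulation, and that such a change can be given a combinatorial signature that is simultaneously recognisable inside the finished ribbon graph and invariant under ribbon-graph isomorphism. I expect this to be additional bookkeeping on top of the proof of Theorem~\ref{thm:belyi} rather than a genuinely new difficulty. In the generic case where $\operatorname{Aut}(X)$ is countable the tags are not even needed, since each equivalence class is then a countable $\operatorname{Aut}(X)$-orbit, so $2^{\aleph_0}$ distinct Belyi functions already represent $2^{\aleph_0}$ classes; as $\operatorname{Aut}(X)$ is a second-countable Lie group, it fails to be countable only when it is positive-dimensional, and for non-compact $X$ positive dimension forces $X$ to be $\C$, $\Cs$, $\D$, $\Ds$ or a round annulus. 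On those surfaces every conformal automorphism is affine or M\"obius and so cannot erase a finite combinatorial feature of a triangulation, which is exactly why the tags dispose of these remaining cases.
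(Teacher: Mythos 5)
Your forward direction is correct and slightly different in flavour from the paper's: you invoke the Riemann existence theorem to bound the number of degree-$d$ Belyi coverings by the number of transitive pairs in $S_d$ modulo conjugacy, whereas the paper simply notes that there are only finitely many abstract equilateral surfaces with $n$ faces, hence countably many in total, and that Belyi functions correspond to $3$-coloured generalised equilateral triangulations. Both give the same conclusion.

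For the backward direction you have correctly identified the right invariant (the ribbon graph $f^{-1}([-1,1])$, equivalently the equilateral triangulation up to orientation-preserving combinatorial isomorphism) and the right strategy: exploit the freedom in the inductive construction to plant combinatorially recognisable features. But you stop at the point where the real content begins — you assert that one can ``plant a finite tag'' at each stage whose combinatorial type ``is fixed in advance, is distinct for distinct $n$, and occurs nowhere else in the construction,'' and defer the verification as ``additional bookkeeping.'' This is precisely the nontrivial step. The difficulty is that Proposition~\ref{prop:hemmedtriangulation} gives you almost no control over local combinatorics: any low-degree cluster of faces you try to insert inside one piece could, in principle, be reproduced by accident inside some other piece of the triangulation, and distinguishing ``intentional'' from ``accidental'' tags requires some global rigidity which you have not supplied. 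The paper's resolution is Lemma~\ref{lem:subdivision}: it modifies the triangulation of each hemmed piece so that \emph{every} boundary vertex has degree in $[D_1,D_0]$ while every interior vertex has degree $<D_1$, which after gluing forces exactly the vertices on the glued boundary cycles to have degree $\geq 2D_1-2 > D_1$. Thus the boundary cycles are the unique maximal cycles of high-degree vertices, their lengths $d^\gamma$ become genuine combinatorial invariants of the ribbon graph, and encoding an arbitrary infinite set of primes into $\Pi(\mathcal{D})=\{p\text{ prime}: p\mid d^\gamma \text{ for some }\gamma\}$ produces $2^{\aleph_0}$ pairwise non-isomorphic triangulations. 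In other words, the paper makes the ``tag'' a global degree threshold rather than a local pattern, which is what makes it robustly detectable.

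Your digression on $\operatorname{Aut}(X)$ is correct but does not close the gap. If $\operatorname{Aut}(X)$ is countable, then indeed $2^{\aleph_0}$ distinct Belyi functions yield $2^{\aleph_0}$ equivalence classes; but you still need to know the construction produces $2^{\aleph_0}$ \emph{distinct} functions, which again requires arguing that different choice sequences give combinatorially different triangulations on $X$ — and once you can do that, the degree-threshold tagging handles all surfaces uniformly, including the handful with positive-dimensional automorphism group, so the case split buys nothing.
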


\subsection*{Finite-type maps}
  Let $X$ and $Y$ be Riemann surfaces, where  $Y$ is compact. 
    Following Epstein~\cite{adamthesis},   
    a holomorphic function $f\colon X\to Y$ is a \emph{finite-type map}
    if there is a finite set $S$ such that 
       \[ f \colon X\setminus f^{-1}(S) \to Y\setminus S \]
       is a covering map, and furthermore
    $f$ has no removable singularities at any punctures of $X$. The smallest
     such set $S$ is called the \emph{set of singular values}, and denoted by $S(f)$. 
           
    Epstein proved that finite-type maps have
    certain transcendence properties near the boundary, reminiscent of 
    the Ahlfors five islands theorem \cite[Proposition~9]{adamthesis}. 
    In particular, he proved that, when 
    $X\subset Y$, the fundamental results of the classical iteration theory 
    of rational functions, and of entire/meromorphic functions with a 
    finite set of singular values, remain valid for finite-type maps.
    Compare also~\cite{cheritatepstein} and~\cite[Section~2]{hypdim}. 
    
    It is a natural question for which pairs of $X$ and $Y$ finite-type maps exist. 
      Corollary~\ref{cor:branchedcover} shows 
      that there are finite-type maps $X\to \Ch$ for every Riemann surface $X$. 
      In particular, when $X\subsetneq \Ch$ is a proper open subset,  we obtain 
       the existence of many new non-trivial finite-type dynamical systems. 
       
    It is also possible to prove the existence of 
      finite-type maps $f\colon X\to Y$ with $\# S(f) = 1$ 
      for every non-compact Riemann surface $X$ 
      and every torus $Y$. This is achieved by a modification of our methods
      that leads to the existence of a \emph{Shabat function} on $X$; i.e.\ 
      a branched covering map from $X$ to the complex plane $\C$ which
      is branched only over two values. Postcomposing the Shabat function 
      with a projection to
      the torus that identifies the two critical values yields the desired 
      finite-type map. The details of the construction
      will be given in a subsequent article. 
      
     The question of the existence of finite-type maps with target $Y$ becomes
        more subtle when $Y$ is hyperbolic. By Liouville's theorem, $X$ must be hyperbolic 
        if such a map is going to exist. In fact, it is possible to show that 
        the boundary of $X$ must be \emph{uniformly perfect}. That is, 
        the hyperbolic length of any non-contractible closed curve in $X$ is bounded uniformly
        from
        below. 
        
    In \cite[Section~16]{BishopFolding}, the first author uses quasiconformal folding
     to construct finite-type maps
     from certain finite Riemann surfaces $U$ 
     (see Section~\ref{sec:belyibackground}) to all compact
     hyperbolic surfaces. This is achieved by constructing a branched covering
     $U\to \DD$ with only two branched points in $\DD$, and postcomposing with
     the universal covering map. If $U'$ is any finite Riemann surface, then a refinement
     of the method of~\cite[Section~16]{BishopFolding} shows that $U$ can be chosen 
     arbitrarily close to $U'$ in its moduli space. In particular, if $U'$ is a subpiece of 
     some compact Riemann surface $Y$, bounded by disjoint analytic boundary
     circles, then the perturbed surface $U$
     is also embeddable in $Y$ (see Proposition~\ref{prop:straightening} below),
     and we obtain new examples of 
     finite-type dynamical systems with hyperbolic target $Y$. 
     The following
     appears plausible in view of our results.
     
     \begin{conj}
       On every finite Riemann surface $U$, there is a branched covering
       $f\colon U\to\DD$ branched over at most two points. 
       In particular, if $Y$ is any compact hyperbolic surface, and $\pi\colon \DD\to Y$ is its universal
       cover, then $\pi\circ f\colon U\to Y$ is a finite-type map.
     \end{conj}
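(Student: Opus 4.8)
A natural route to the conjecture is to combine quasiconformal folding with the re-embedding technique underlying Theorem~\ref{thm:belyi}. The starting point is \cite[Section~16]{BishopFolding}: there, quasiconformal folding produces on suitable finite Riemann surfaces a branched covering onto $\DD$ with exactly two branched points, and the refinement mentioned before the statement shows that the domain of such a map may be taken arbitrarily close, in its moduli space, to any prescribed finite Riemann surface $U'$. Equivalently, on $U'$ itself one obtains a quasiregular branched covering $g\colon U'\to\DD$ branched over two points whose dilatation is bounded by a universal constant and supported on a set of arbitrarily small area; straightening via the measurable Riemann mapping theorem gives a holomorphic branched covering onto $\DD$, branched over two points, on a surface $\tilde U$ quasiconformally close to $U'$. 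This already establishes a dense version of the conjecture.

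The essential point is then to replace ``$\tilde U$ close to $U$'' by ``$\tilde U$ conformally equivalent to $U$'', i.e.\ to eliminate the perturbation; this is exactly what the inductive construction of the present paper accomplishes for the target $\Ch$, and the plan is to import that scheme. One would exhaust $U$, viewed inside a slightly larger surface, by an increasing sequence of pieces of finite topological type, build on each piece an almost-equilateral partial structure together with a partial branched covering onto a correspondingly exhausted model of $\DD$, arrange that the quasiconformal change needed to make the $n$-th piece holomorphic is small enough that the modified piece re-embeds into $U$ (via a straightening statement in the spirit of Proposition~\ref{prop:straightening}), and control the accumulation of distortion across the stages using the area-distortion estimates for quasiconformal maps employed in the proof of Theorem~\ref{thm:belyi}. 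Passing to the limit would yield a holomorphic branched covering $f\colon U\to\DD$ branched over at most two points, with $U$ as its natural domain.

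The main obstacle, we expect, is the rigidity of the target. With target $\Ch$ and branch values $-1,1,\infty$ there is enormous combinatorial freedom: the equilateral triangulation on each exhausting piece can be chosen so as to absorb the accumulated quasiconformal error, which is what makes the error control in Theorem~\ref{thm:belyi} feasible. With target $\DD$ and only \emph{two} admissible branch values, the combinatorial models available~-- essentially the coverings of a twice-punctured disc, that is, certain subgroups of the free group $F_2$~-- are far more constrained, and it is not clear that they can always be adjusted locally so as to keep the compensating quasiconformal modification both uniformly bounded and of small support. A closely related difficulty concerns the behaviour near $\partial U$: the limiting $f$ must be a branched \emph{covering} of $\DD$ (small discs in $\DD$ must have proper preimage components), equivalently $f$ must fail to extend holomorphically across $\partial U$, so near $\partial U$ the construction has to be compatible with the hyperbolic geometry of the target while leaving the conformal structure of $U$ untouched. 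Making these two requirements coexist is, in our view, the crux of the problem. One might also try to shortcut the argument by starting from a Belyi function on the open surface $\interior U$ furnished by Theorem~\ref{thm:belyi}; but such a function is surjective onto $\Ch$, and we see no way to convert it into a branched covering onto $\DD$, so a genuinely new construction seems to be required.

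Granting the existence of $f$, the ``in particular'' clause follows at once. The universal covering map $\pi\colon\DD\to Y$ is a holomorphic covering with no branch points and no singular values, so $\pi\circ f\colon U\to Y$ is again a branched covering; its branched values lie over the at most two points $\pi(a),\pi(b)$, where $a,b$ are the branch values of $f$, and $S(\pi\circ f)=\pi(S(f))$ has at most two elements. Away from $(\pi\circ f)^{-1}(S(\pi\circ f))$ the map $\pi\circ f$ is a composition of covering maps, hence a covering onto $Y\setminus S(\pi\circ f)$; and since $f$ is defined on its natural domain (compare the remark following the definition of Belyi function), the composition $\pi\circ f$ has no removable singularities. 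Therefore $\pi\circ f$ is a finite-type map with at most two singular values.
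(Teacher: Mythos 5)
This statement is a \emph{conjecture} in the paper, given without proof; the authors merely remark that it ``appears plausible in view of our results,'' citing the evidence from~\cite[Section~16]{BishopFolding}. You correctly treat it as open and do not claim a proof: you sketch the natural strategy (quasiconformal folding for the dense version, then an exhaustion/re-embedding argument in the spirit of Proposition~\ref{prop:straightening} together with the area-distortion control used in the proof of Theorem~\ref{thm:belyi}), and you put your finger on the genuine obstacle~-- the target $\DD$ with only two admissible branch values is far more combinatorially rigid than $\Ch$ with three, so it is unclear that partial branched structures on exhausting pieces can be adjusted locally so as to keep the compensating quasiconformal modification both uniformly bounded and of small support. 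Naming this as the crux is exactly what separates a conjecture from a theorem here, and is the appropriate response.

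Two small points on the ``in particular'' clause, which you handle essentially correctly. First, since a finite Riemann surface by definition has no degenerate boundary components, $U$ has no punctures, so the removable-singularity condition in Epstein's definition of a finite-type map is vacuous; the content is just that $\pi\circ f$ restricted over $Y\setminus\{\pi(a),\pi(b)\}$ is a covering. Second, that this composition of the two restricted coverings is again a covering uses that the base $Y\setminus\{\pi(a),\pi(b)\}$ is a surface and hence admits a universal cover (the composition of covering maps need not be a covering in general). Your observation that the Belyi function from Theorem~\ref{thm:belyi} on $\interior U$, being surjective onto $\Ch$, offers no shortcut to a map into $\DD$ is also correct.
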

     
     The method of~\cite[Section~16]{BishopFolding} can also be used to construct 
      finite type maps to hyperbolic surfaces on some infinitely-connected $U$. It
      is an interesting question whether 
      such functions exist on all hyperbolic surfaces with uniformly perfect boundary.
        
\subsection*{Conformal tilings}
  Bowers and Stephenson~\cite{bowersstephensonpentagonal,bowersstephenson,bowersstephensonII} study 
   \emph{conformal tilings} of a Riemann surface $X$, which are obtained by 
   allowing general regular polygons, of the same fixed side-length, in our 
   construction above. In particular, every equilateral triangulation of $X$ is
   also a conformal tiling. Conversely, the barycentric subdivision of a conformal tiling
   is an equilateral triangulation, so a tiling exists if and only if the surface is
   equilaterally triangulable. 
   
 Bowers and Stephenson are mainly interested in the case where $X$ is simply connected.
   As mentioned above, these surfaces are equilaterally triangulable for elementary
   reasons; the cited articles exhibit many interesting and
   beautiful different such conformal tilings. 
   However,~\cite[Appendix~B]{bowersstephenson} also
   raises the question which multiply-connected surfaces admit conformal tilings;
    this is equivalent to Question~\ref{question:belyi}, and 
    Theorem~\ref{thm:triangulation} (together with Belyi's theorem for the compact case) gives
    a complete answer.
   
 \subsection*{Random equilateral triangulations} There is an extensive literature on random equilateral
triangulations of compact surfaces; see e.g.\ 
\cite{MR2152911,MR3080483,BCP-2019}.
In statistical physics, there has been intensive study of
the metric and conformal structures on  compact surfaces built from random
equilateral triangulations, quadrangulations or more general
random maps, and especially of the limits of these  random surfaces
when the number of triangles tends to infinity but the genus is held constant. For
 example, a recent
 major result of Miller and Sheffield \cite{MR4050102,MS-MapII,MS-MapIII}
  shows that two such limiting objects~-- ``Liouville quantum gravity'' and the ``Brownian map''~--
   are essentially the same. Compare also 
\cite{MR2336042,MR4007665,Miermont-aspects}.
For 
analogous constructions on higher genus compact surfaces, see e.g. \cite{DRV-2015,MR3627425}.

In all of these cases, the distribution of the conformal structures
of the discrete random surfaces is
supported on a countable set in moduli space (Belyi surfaces in the case of random
equilateral triangulations), but for a fixed genus, the distributions
conjecturally converge to continuous distributions.
What can be said about random non-compact triangulations?  For the Euclidean plane,
this  question has been addressed by Angel and Schramm
\cite{AngelSchramm}: they show how to define a probability measure on the metric space of
rooted planar triangulations, 
called a uniform infinite planar triangulation (UIPT). Hyperbolic
versions have also been considered; compare \cite{MR3342664,MR3520011,MR4076778}.

The UIPT can be thought of as a uniformly random
surface with the topology of a plane. Can one also make sense of
the notion of a uniformly random surface with the topology of a cylinder, or
some other non-compact topology, such as a compact surface with a puncture?
Scott Sheffield suggested the following formulation of this problem.
Begin with the UIPT, which comes with a distinguished "origin" triangle, and
then cut out that origin triangle and glue in some finite genus graph.
By our results, 
it is at least possible that there is a continuous limiting distribution. Do all conformal
structures occur if we glue in a random finite genus graph? Does a neighborhood of
a point in moduli space  occur if we glue in a fixed choice?

\subsection*{Basic notation}
  The symbols $\C$ and $\Ch$ denote the complex plane and  Riemann sphere,
   respectively.
   The (Euclidean) disc of radius $\rho$ around $w\in\C$ is
   denoted by $D(w,\rho)$; the unit disc is denoted
   $\D \defeq D(0,1)$.  In a slight abuse of terminology, we also denote the 
   complement of 
   the closed unit disc by $D(\infty,1) = \Ch\setminus \overline{\D}$. 
   For $R>1$ we define the following annuli (see Figure~\ref{fig:annuli}):
          \begin{align*} &\AA(R) \defeq \{1/R < \lvert z\rvert < R\}; \\
          &\AA_-(R) \defeq \{ 1/R < \lvert z \rvert < 1 \} = \AA(R)\cap \D; \quad\text{and}\\
          &\AA_+(R) \defeq \{ 1 < \lvert z \rvert < R\} = \AA(R)\setminus\overline{\D}.
            \end{align*}

   A quasiconformal map $\psi$ on a  planar domain $\Omega$ has a 
   \emph{complex dilatation} $\mu = \psi_{\overline{z}} / \psi_z$. 
   This is a measurable function on $\Omega$ and has $L^\infty$ norm 
   equal to some $k  \in [0,1)$.  The \emph{maximal dilatation} of $\psi$ is 
   denoted $K =  (k+1)/(k-1)$; such a map is called $K$-quasiconformal. 
   Geometrically, this is the maximal eccentricity 
   of the elliptical image of a circle under a tangent map of $\psi$. 
   Note that $k=0$ and $K=1$ for conformal maps.
   The term \emph{dilatation} can refer to either of these 
   quantities; for clarity we distinguish between the ``complex dilatation'' 
   $\mu$  and the ``maximal dilatation'' $K$.  

   In general, $A=B$ denotes equality between two previously defined 
   quantities, and $A:=B$ defines $A$ in terms of $B$.

   We assume throughout that the reader is
   familiar with the theory of Riemann surfaces and quasiconformal mappings,
   and refer e.g.\ to~\cite{forsterriemannsurfaces,lehtovirtanen,hubbardteichmuller} 
   for reference. In addition, the proofs in Section~\ref{sec:teichmueller} 
   use background from Teichm\"uller theory. However, this technique
    is not required to understand the statements of
    the main results in these sections, or their applications in the proofs of
    our main theorems.

 \subsection*{Acknowledgements} This research arose from an interesting and stimulating e-mail discussion
  concerning finite-type maps with Adam Epstein and Alex Eremenko; we thank them both for the 
  initial inspiration and subsequent helpful comments and conversations on this work. We are grateful to J.\ Martel \cite{mathoverflow} for
  pointing out the result of Williams on circle packing, Curt McMullen for helpful comments on the case of compact surfaces, and Daniel Meyer 
  for highlighting the connection with conformal tilings. We thank Dmitry Chelkak and Scott Sheffield for providing helpful
  comments and references on random triangulations. We are grateful to 
  the referee for their thoughtful comments on the manuscript that improved
  the presentation. 
  
 This research was partly conducted while the second author was 
   employed at the University of Liverpool, and he gratefully acknowledges 
   support
   by the University of Liverpool for this research through 
   a number of research visits to Stony Brook University.

\section{Riemann surfaces, triangulations and Belyi functions}%%
\label{sec:belyibackground}

 In this section, we collect background on Riemann surfaces
  and triangulations. In particular, we recall the proof of the fact 
  that a Riemann surface is equilaterally triangulable if and only if
  it supports a Belyi function. 

\subsection*{Riemann surfaces and conformal metrics}
A   \emph{Riemann surface} $X$ is a connected one-dimensional complex
   Hausdorff manifold. 
  By a \emph{conformal metric} on a Riemann surface we mean a length
   element that takes the form
     $\deriv s = \rho(z) \lvert \deriv z\rvert$ in local coordinates (where $\rho$ is a continuous
     positive-valued function). Note that each conformal metric gives rise to
     an area element, $\rho^2(z) \lvert \deriv z\rvert^2$. 
     When such a metric $\rho$ 
     is given, we shall write $\dist_{\rho}$ for the corresponding distance
     function; i.e.\ $\dist_{\rho}(z,w)$ is the largest lower bound for the $\rho$-length of a curve
     connecting $z$ and $w$. (We  
     omit the subscript $\rho$ when it is clear from the context which metric $\rho$
     is to be used.)
     
    By the uniformisation theorem, every Riemann surface can be endowed with a 
      conformal metric of constant curvature; in the case of positive
      or negative curvature,
      this metric becomes unique by requiring that the curvature is $1$ or $-1$,
       respectively. 
      We emphasise that we use conformal metrics only in an inessential
      way, to provide a measure of smallness of area on compact pieces of
      a Riemann surface. Any two conformal metrics on a compact surface
       (or surface-with-boundary) are equivalent; indeed, the quotient of
       their densities is a continuous function and hence 
       assumes a positive and finite maximum and minimum. Thus
       the precise choice of metric
       will be irrelevant. 
       
  \subsection*{Finite pieces of Riemann surfaces}
         A Riemann surface $X$ is said to be \emph{finite} if 
    it is of finite genus with a finite number of boundary components, none of which are degenerate. In other words, $X$ is conformally equivalent to a compact
    Riemann surface with at most finitely many topological discs removed. This notion should not be confused with that of \emph{finite type}: 
   a surface has \emph{finite topological type} if it is 
    homeomorphic to a compact surface with finitely many points removed,
    and it has  \emph{finite conformal type} if this homeomorphism can be chosen analytic. 
   In particular, a non-compact finite Riemann surface has finite topological type, but is never of finite conformal type. 
    (See Figure~\ref{fig:types}.) To avoid ambiguities, we do not use the
   notion of finite conformal type in the remainder of the article. 

\begin{figure}
\begin{center}
\subcaptionbox{Finite Riemann surface}{\includegraphics[width=0.3\textwidth]{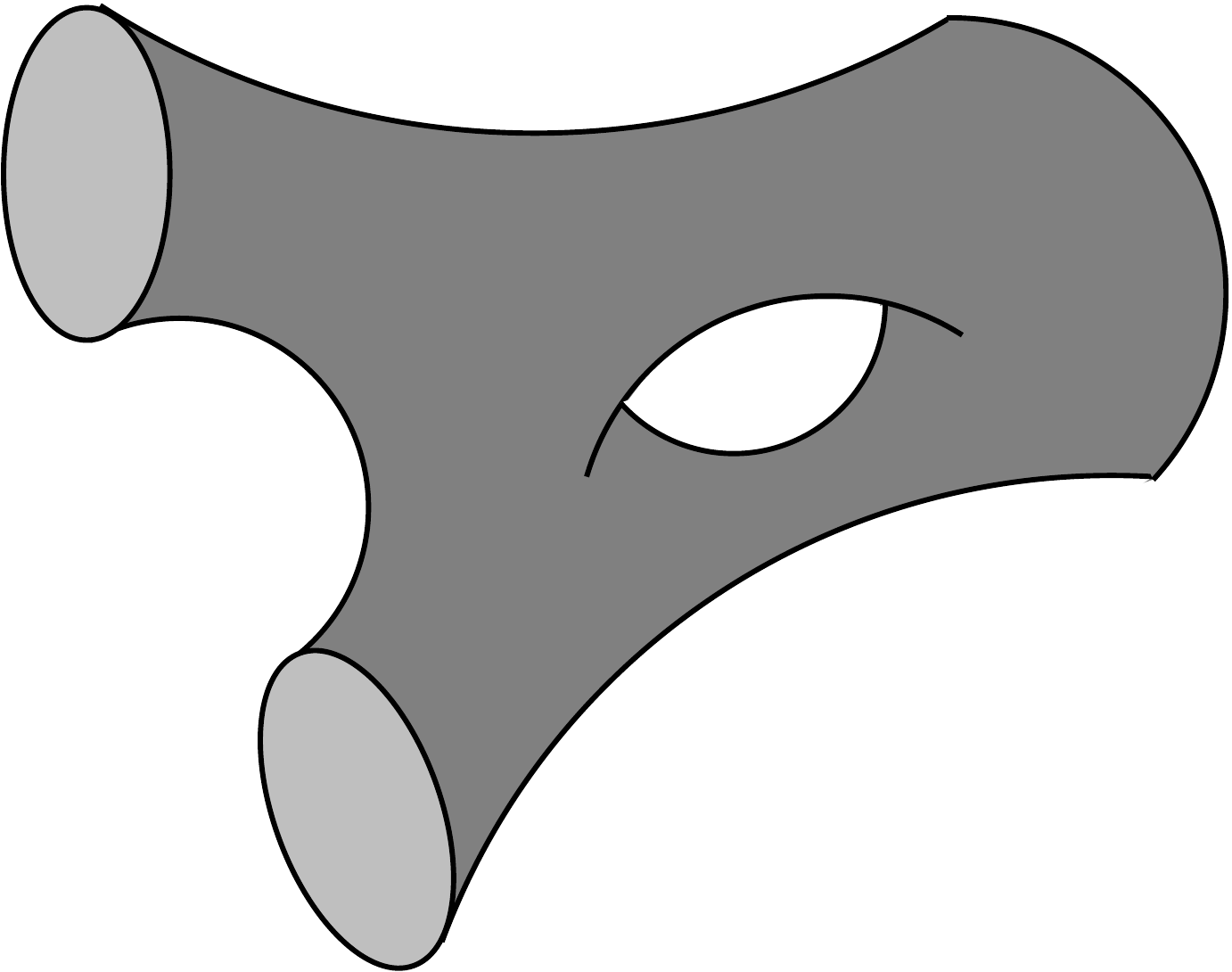}}%
\hfill%
\subcaptionbox{Finite conformal type}{\includegraphics[width=0.3\textwidth]{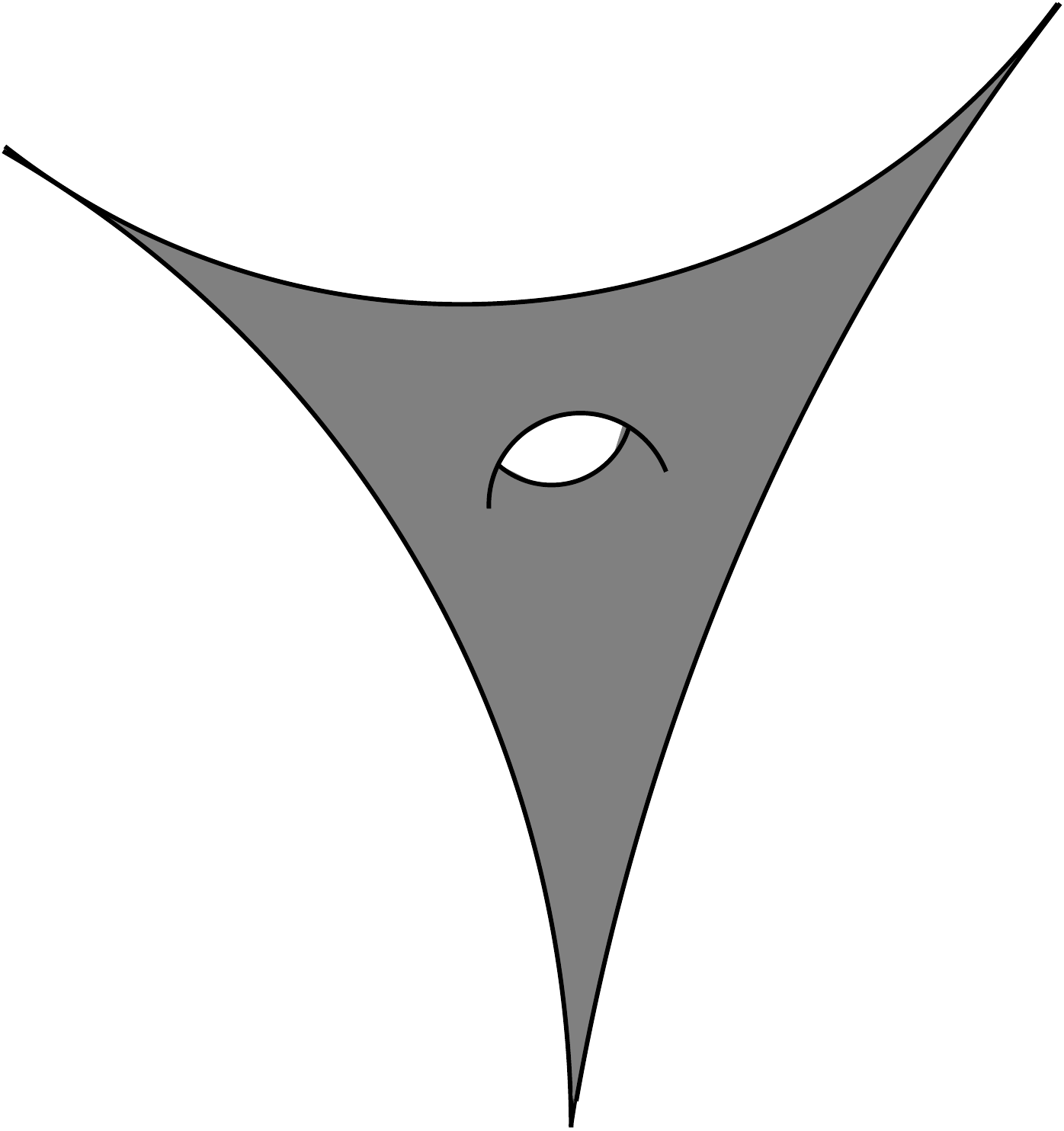}}\hfill
\subcaptionbox{Finite topological type}{\includegraphics[width=0.3\textwidth]{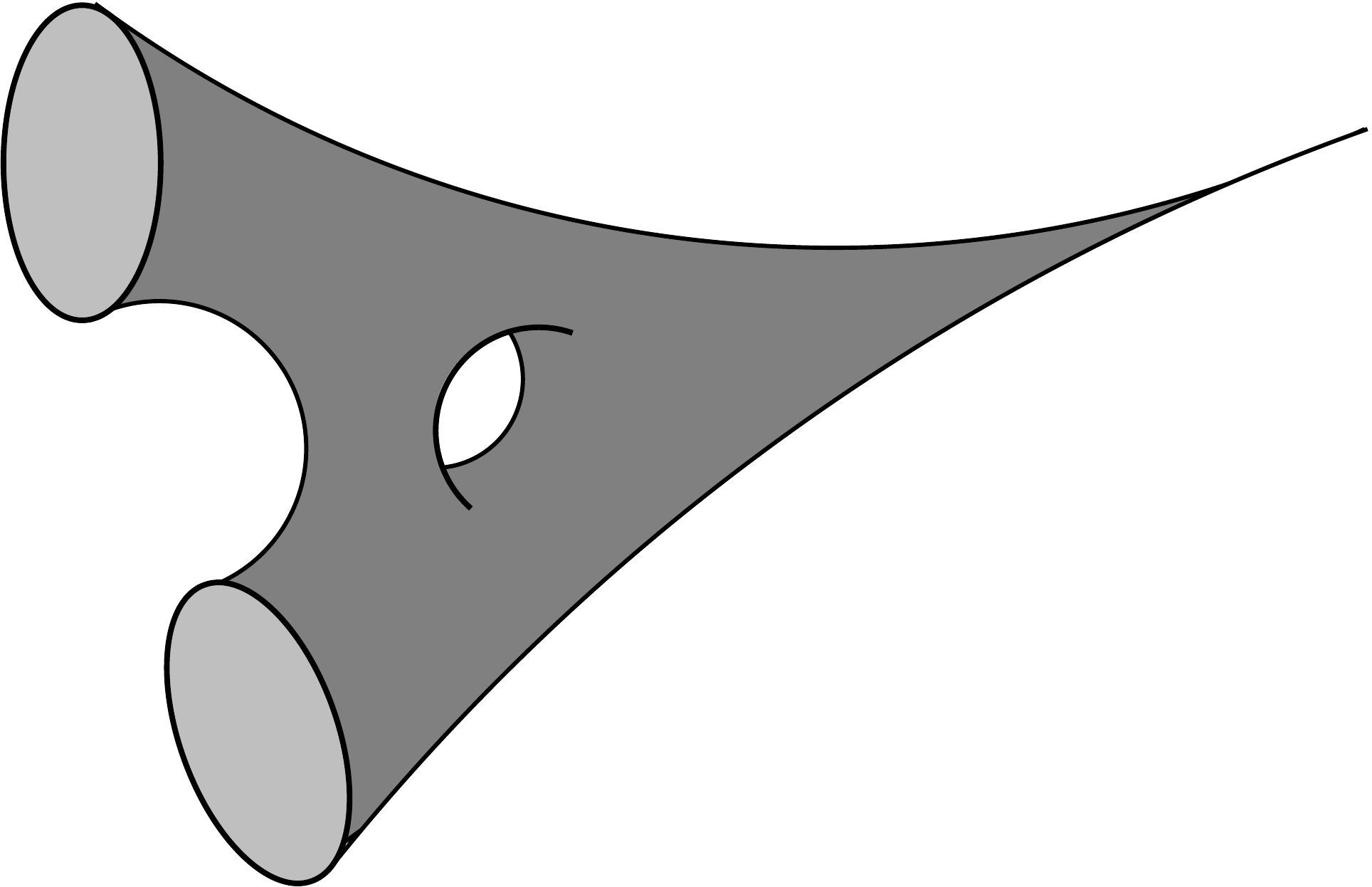}}
\end{center}
\caption{\label{fig:types}Three different notions of finiteness of Riemann
surfaces.}
\end{figure}

 In our context, finite Riemann surfaces often arise as subsets of a larger surface $X$. The following notation will be convenient. 

  \begin{defn}[Finite pieces]\label{defn:finitepiece}
     Let $X$ be a Riemann surface, and  let $U\subsetneq X$ be a finite Riemann surface. If $U$ is pre-compact in $X$, then we say that $U$ is a
     \emph{finite piece} of $X$.  If furthermore $\partial U\subset X$ consists of finitely many analytic Jordan curves (called the \emph{boundary curves} of $U$), then $U$ is said to be 
    \emph{analytically bounded}. 
  \end{defn}

 \subsection*{Boundary coordinates and hemmed surfaces}
    We shall construct triangulations on finite pieces of our Riemann surface $X$. 
      To be able to combine such partial triangulations, we also need to 
      record, for a finite piece, suitable parameterisations of its 
      boundary. We hence introduce the
      following notion. (See Figure~\ref{fig:annuli}.) 
      
    \begin{defn}\label{defn:hemmed}
        A \emph{hemmed Riemann surface} is a non-compact finite Riemann surface $U$, together with 
         analytic parameterisations of  its boundary curves.
       More precisely, let $\Gamma$ be the set of boundary curves of $U$
          (or, in other words, the set of ends of $U$). For each 
          $\gamma\in\Gamma$, let 
            \[ \phi^{\gamma} \colon \AA_-(R^{\gamma}) \to A^{\gamma},\] 
      where $R^{\gamma}>1$, be a conformal map to an annulus $A^{\gamma}\subset U$ such that 
       $\phi^{\gamma}(z)\to \gamma$ as $\lvert z\rvert \to 1$. 
       We furthermore assume that the image annuli $A^{\gamma}$ have
         pairwise disjoint closures.         
        Then we say that $U$ is a \emph{hemmed Riemann surface} with 
          boundary coordinates $(\phi^{\gamma})_{\gamma\in\Gamma}$. 
    \end{defn}

  Observe that the closure of every hemmed Riemann surface is
    a compact Riemann surface-with-boundary, with charts on the boundary
    curve $\gamma$ given by $(\phi^{\gamma})^{-1}$. 
    Conversely, any compact Riemann surface-with-boundary can be given the 
    structure of a hemmed Riemann surface by choosing an annulus $A^{\gamma}$ 
    around each boundary curve, and letting $\phi^{\gamma}$ be a conformal
    map from a round annulus to $A^{\gamma}$. 
    Different choices of annuli will lead to different
    boundary coordinates, and hence to different hemmed surfaces.

\begin{figure}[htb]
  \begin{overpic}[ percent, width = 5in]{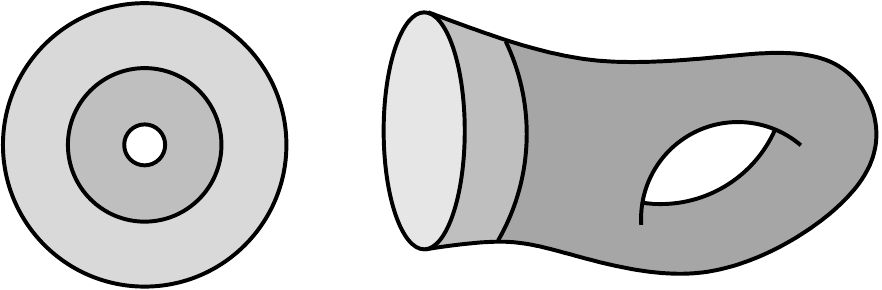}
  \put(-10,20){${\mathbb A}(R)$}
  \put(10,27){${\mathbb A}_+(R)$}
  \put(11,11){${\mathbb A}_-(R)$}
  \put(40,20){$\gamma$}
  \put(53,20){$A^\gamma$}
  \end{overpic}\\[2ex]
  \caption{
	\label{fig:annuli} The annuli $\AA_+(R)$ and $\AA_-(R)$ (left), 
 and a hemmed Riemann surface (right)
        }
\end{figure}

\subsection*{Triangulations} 
\begin{defn} 
  Let $X$ be a Riemann surface, or a Riemann surface-with-boundary. 
   A \emph{triangulation} of $X$ is a countable and locally finite 
   collection of closed topological 
   triangles that cover $X$, such that two triangles intersect only in 
   a full edge or in a vertex.
%   , and such that every edge is either disjoint from
%     or wholly contained in the boundary of $X$. 
\end{defn} 

   In other words, a triangulation furnishes $X$ with the structure
   of a locally finite simplicial complex.    
   By a theorem of Rad\'o 
    from 1925 \cite{Rado1925} (see \cite[\S23]{forsterriemannsurfaces} or
    \cite[Theorem~1.3.3]{hubbardteichmuller}), 
     every Riemann surface is second 
     countable, and hence triangulable. 
          
   Let $\mathcal{T}$ be a triangulation
   and let $\Delta$ be the 
         Euclidean equilateral triangle inscribed in the unit circle, with a vertex at $1$.     
   For each topological triangle $T\in\mathcal{T}$, 
     let $\phi_T$ 
     denote a biholomorphic isomorphism that takes $T$ to $\Delta$,
     mapping vertices to vertices. Observe that
     $\phi_T$ is unique up to postcomposition by a rotational symmetry of $\Delta$. 

\begin{defn}
     The triangulation $\mathcal{T}$ is \emph{equilateral} if, on every edge $e$
      with two  
     adjacent triangles $T$ and $\tilde{T}$, the maps $\phi_T$ and
     $\phi_{\tilde{T}}$ agree up to a reflection symmetry of $\Delta$. 
     If such a triangulation exists, we say that $X$ is \emph{equilaterally triangulable}. 
\end{defn}
     
    It is elementary to see that this agrees with the definition given
     in the introduction, with one caveat: The triangulations mentioned there 
     allowed two triangles to intersect in more than one edge; let
     us call these \emph{generalised triangulations} in the following. 
     Given an equilateral generalised triangulation, we can
     perform a barycentric 
     subdivision of all triangles, 
     inserting a new vertex in the barycenter of each face and the mid-point 
     of each edge. In this triangulation, no two triangles intersect in more than
     one edge. The following observation shows that this 
     triangulation is also equilateral; see Figures~\ref{fig:barycentric} and~\ref{fig:trivialcases}. 
     Compare~\cite[\S1.3]{bowersstephenson}. 

\begin{figure}
\begin{center}
 \includegraphics[width=.7\textwidth]{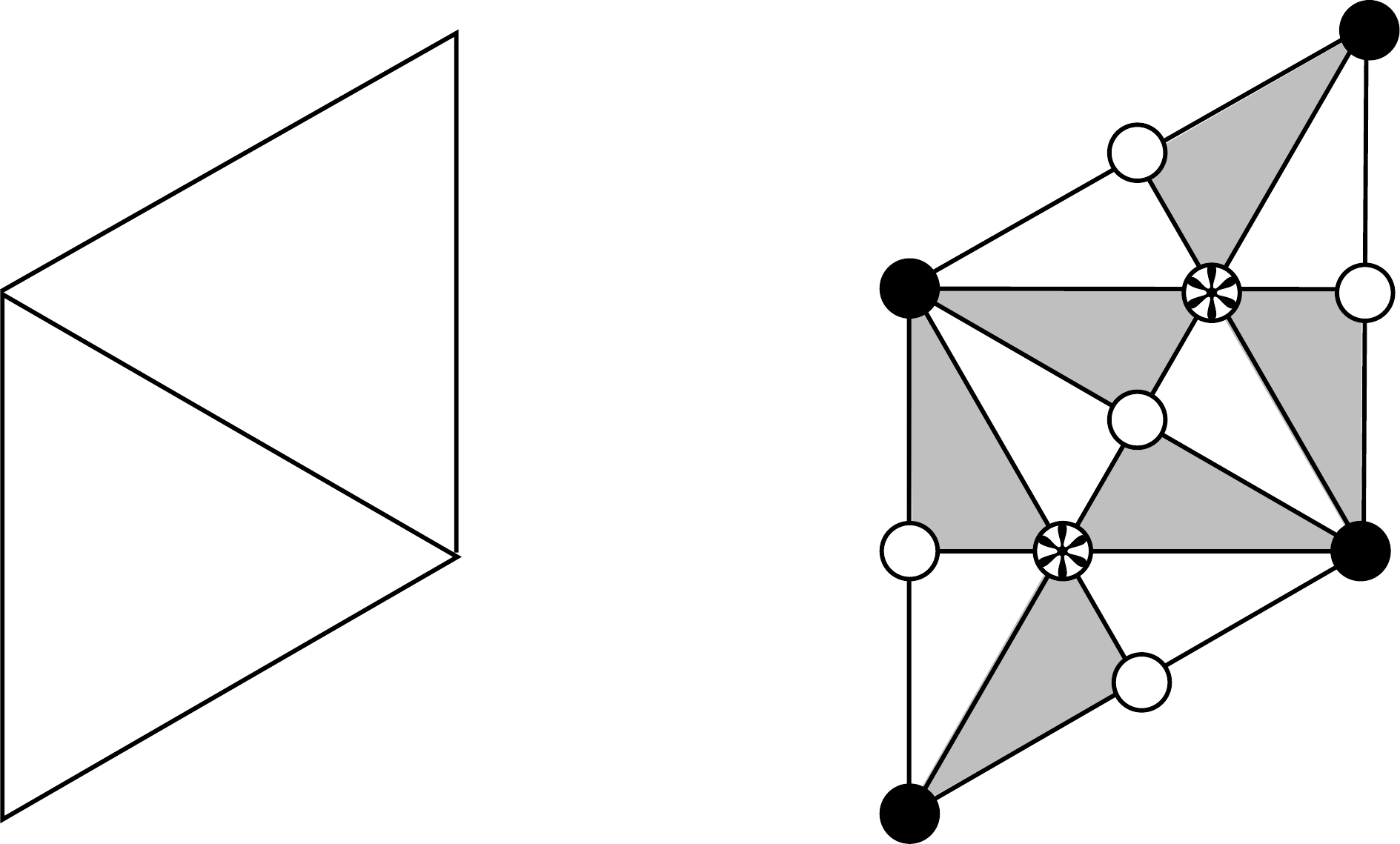}
\end{center}
\caption{\label{fig:barycentric}Any equilateral triangulation
can be refined by barycentric subdivision into a new equilateral triangulation that is bipartite and 3-coloured, as described after the proof of Proposition~\ref{prop:equivalence}.}
\end{figure}
     
  \begin{lem}[Equilateral triangulations and reflections]\label{lem:reflection}
     A generalised triangulation of $X$ 
     is equilateral if and only if the two 
     triangles adjacent to a given edge are related by reflection.      
     That is, suppose that the triangles
      $T$ and $\tilde{T}$ are both adjacent to an edge $e$.
      Then there exists an antiholomorphic homeomorphism
      $\iota\colon T\to\tilde{T}$ that fixes $e$ pointwise and maps the third vertex
      of $T$ to the corresponding vertex of $\tilde{T}$. 
  \end{lem}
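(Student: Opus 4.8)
The plan is to localise the statement to a single edge. Both ``equilateral'' and ``the reflections $\iota$ exist'' are conditions checked edge by edge, so it suffices to prove, for one edge $e$ shared by $T$ and $\tilde{T}$, that the charts $\phi_T,\phi_{\tilde{T}}$ restricted to $e$ agree up to a reflection symmetry of $\Delta$ if and only if there is an antiholomorphic homeomorphism $\iota\colon T\to\tilde{T}$ fixing $e$ pointwise and sending the third vertex of $T$ to that of $\tilde{T}$. In both directions the bridge is the self-map of $\Delta$ obtained by conjugating: from a reflection $\sigma$ of $\Delta$ one forms $\iota\defeq\phi_{\tilde{T}}^{-1}\circ\sigma\circ\phi_T$, and from a map $\iota$ one forms $h\defeq\phi_{\tilde{T}}\circ\iota\circ\phi_T^{-1}$. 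The content of the lemma is the elementary fact that the antiholomorphic automorphisms of $\overline{\Delta}$ permuting the three vertices are exactly the three Euclidean reflection symmetries of $\Delta$.

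First I would record that fact. Restricting a conformal automorphism of $\overline{\Delta}$ to $\interior\Delta$ and transporting it through a Riemann map $\interior\Delta\to\D$ (which extends to a homeomorphism of closures by Carath\'eodory's theorem, $\Delta$ being a Jordan domain) yields a M\"obius automorphism of $\D$; since such a map is determined by its values at any three boundary points, a conformal automorphism of $\overline{\Delta}$ permuting the vertices is determined by the induced permutation. An orientation-preserving homeomorphism of $\overline{\D}$ preserves the cyclic order of boundary points, so only the three cyclic permutations occur, and these are realised by the three rotational symmetries of $\Delta$. Precomposing an anticonformal automorphism with a fixed reflection reduces to this case and shows that the anticonformal automorphisms permuting the vertices are precisely the three reflections of $\Delta$; each fixes one vertex, reverses the opposite edge, and interchanges the two edges at the fixed vertex, and in particular carries an edge $f$ of $\Delta$ to a (possibly equal) edge $f'$ together with the vertex opposite $f$ to the vertex opposite $f'$.

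For ``$\Leftarrow$'', given $\iota$ I would observe that $h\defeq\phi_{\tilde{T}}\circ\iota\circ\phi_T^{-1}$ is an antiholomorphic homeomorphism of $\overline{\Delta}$ onto itself; since $\iota$ fixes the two endpoints of $e$ (common vertices of $T$ and $\tilde{T}$) and maps the third vertex of $T$ to the third vertex of $\tilde{T}$, while $\phi_T$ and $\phi_{\tilde{T}}$ send vertices to vertices, $h$ permutes the vertices of $\Delta$ and hence equals a reflection symmetry $\sigma$. Restricting $\phi_{\tilde{T}}\circ\iota=\sigma\circ\phi_T$ to $e$ and using $\iota|_e=\id$ gives $\phi_{\tilde{T}}|_e=\sigma\circ\phi_T|_e$, the defining condition for equilaterality at $e$. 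Conversely, for ``$\Rightarrow$'', if $\phi_{\tilde{T}}|_e=\sigma\circ\phi_T|_e$ for a reflection $\sigma$ of $\Delta$, then $\iota\defeq\phi_{\tilde{T}}^{-1}\circ\sigma\circ\phi_T\colon T\to\tilde{T}$ is antiholomorphic (two conformal maps and one anticonformal one), restricts to $\id$ on $e$, and, since $\sigma$ carries $\phi_T(e)$ to $\phi_{\tilde{T}}(e)=\sigma(\phi_T(e))$ and the corresponding opposite vertices, sends the third vertex of $T$ to that of $\tilde{T}$.

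The argument is essentially formal, so I do not anticipate a real obstacle; the only points that need a little care are the classification of (anti-)conformal self-maps of $\overline{\Delta}$ permuting the vertices, and the boundary regularity of the charts $\phi_T$ (that they are homeomorphisms of the closed triangles, conformal on the interior), which I would fold into the standing conventions on triangulations.
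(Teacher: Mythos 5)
Your proof is correct and follows essentially the same route as the paper's: both directions are handled by conjugating with the charts $\phi_T,\phi_{\tilde T}$ to pass between an (anti-)holomorphic self-map of $\Delta$ permuting the vertices and the map $\iota$ between the triangles. The only difference is that you spell out the classification of anticonformal automorphisms of $\overline{\Delta}$ permuting the vertices (via Carath\'eodory and the three-point rigidity of M\"obius maps), a fact the paper simply takes as known; this is a welcome elaboration, not a different argument.
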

  \begin{proof}
       Let $e$, $T$ and $\tilde{T}$ be as in the statement, and let
       $\phi_T$ and $\phi_{\tilde{T}}$ be as defined above. Suppose that $\phi_{\tilde T}|_e = R \circ \phi_{T}|_e$, where
     $R$ is a reflection symmetry of $\Delta$. Then 
     \[ \iota \defeq \phi_{\tilde{T}}^{-1} \circ R \circ \phi_T \]
     is an antiholomorphic bijection as in the statement of the observation. 
     
     Conversely, suppose $\iota$ is such a bijection. Then
        $R \defeq \phi_{\tilde{T}}\circ \iota\circ \phi_{T}^{-1}$ is an 
        antiholomorphic automorphism of the triangle $\Delta$, mapping vertices 
        to vertices. Thus $R$ is a reflection symmetry of $\Delta$, as required.
  \end{proof}

  \begin{rmk}\label{rmk:rigidity}
    It follows from the Schwarz reflection principle that, if a reflection 
      $\iota\colon T\to\tilde{T}$ as above exists, then $\iota$ and hence $\tilde{T}$ are
      uniquely determined by $T$. In particular, an equilateral triangulation
      $\mathcal{T}$ is uniquely determined by any given triangle $T\in\mathcal{T}$.
  \end{rmk}
     
   The equivalence of Theorems~\ref{thm:triangulation} and~\ref{thm:belyi} is
     a consequence of the following fact.
     
    \begin{prop}[Triangulations and Belyi functions]\label{prop:equivalence}
      A Riemann surface $X$ is equilaterally triangulable if and only if
         there is a Belyi function on $X$. 
    \end{prop}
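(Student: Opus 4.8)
The plan is to establish both directions of the equivalence by the classical dictionary between equilateral triangulations and Belyi functions, using $\Delta$ (the equilateral triangle inscribed in the unit circle with a vertex at $1$) together with a fixed biholomorphism of $\Delta$ onto the upper half-plane, or more precisely of the two complementary triangles onto the two halves of $\Ch\setminus[-1,1]$. First I would fix once and for all a conformal map $\psi$ from the interior of $\Delta$ onto the upper half-plane $\H$, sending the three vertices of $\Delta$ to $-1$, $1$, $\infty$ in some chosen cyclic order; by the reflection principle $\psi$ extends continuously to the boundary, and the three reflection symmetries of $\Delta$ correspond under $\psi$ to the three anticonformal involutions of $\Ch$ fixing $[-1,1]$, $[1,\infty]$, and $[-\infty,-1]$ respectively. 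The key point (essentially Schwarz reflection) is that if two triangles $T,\tilde T$ of an equilateral triangulation share an edge $e$, then $\psi\circ\phi_T$ and $\psi\circ\phi_{\tilde T}$ are anticonformal reflections of each other across the corresponding segment of $\R\cup\{\infty\}$, so the two maps glue to a single holomorphic map on $\interior(T\cup e\cup\tilde T)$ taking values in $\Ch$ and branched only over $\{-1,1,\infty\}$.

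For the direction ``equilaterally triangulable $\Rightarrow$ Belyi function'', I would first pass, via barycentric subdivision and Lemma~\ref{lem:reflection}, to an equilateral triangulation in which no two triangles share more than one edge and the triangulation is bipartite (two-coloured by ``$\psi$-orientation''): say the ``white'' triangles are sent by $\psi\circ\phi_T$ to $\H$ and the ``black'' ones to the lower half-plane. Then I define $f\colon X\to\Ch$ face by face: on each closed white or black triangle $T$, set $f|_T \defeq \psi\circ\phi_T$, with the rotational ambiguity in $\phi_T$ pinned down by requiring vertices coloured consistently to map to the same one of $-1,1,\infty$. Lemma~\ref{lem:reflection} guarantees these definitions agree on shared edges, so $f$ is well-defined and continuous; it is holomorphic on the interior of each face and each edge, hence (by Morera or by removability of analytic arcs) holomorphic across the edges, and finally extends holomorphically across each vertex because a vertex where $2k$ triangles meet is a local $k$-fold branched point over the corresponding value in $\{-1,1,\infty\}$ — this is where one uses that only finitely many triangles meet at each vertex, together with the power-map charts already described in the introduction. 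Local finiteness of the triangulation makes $f$ a branched covering in the sense of the Remark, and its only critical values are $-1$, $1$, $\infty$, so $f$ is a Belyi function.

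For the converse, given a Belyi function $f\colon X\to\Ch$ I pull back the standard triangulation of $\Ch$: the two closed triangles $\cl{\H}$ and $\cl{\H^-}$ (more precisely, the two components of $\Ch\setminus[-1,1]$, each a topological triangle with vertices $-1,1,\infty$) tile $\Ch$, and I set $\T \defeq \{\, \cl{V} : V \text{ a connected component of } f^{-1}(\H) \text{ or of } f^{-1}(\H^-)\,\}$, subdividing as needed so that each such $\cl V$ is genuinely a topological triangle. Because $f$ is a branched covering branched only over $-1,1,\infty$, each such $V$ maps conformally onto the open half-plane, so $\phi^{-1}\circ f|_{\cl V}$ (with $\phi$ the inverse of the relevant half-plane map to $\interior\Delta$) exhibits $\cl V$ as a copy of $\Delta$; two adjacent triangles lie over the two sides of one of the three boundary arcs, so they are related by the corresponding anticonformal reflection, and Lemma~\ref{lem:reflection} tells us the resulting triangulation is equilateral. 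Local finiteness of $\T$ at vertices is exactly the statement that $f$ has finite local degree at each preimage of $-1,1,\infty$, which holds since $f$ is a branched covering. The main obstacle, and the only genuinely technical point, is bookkeeping the vertex behaviour in both directions — checking that ``$n$ triangles cyclically around a vertex'' translates precisely into ``local degree $\lceil n/2\rceil$ over the corresponding branch value'' and that the conformal structure of $X$ near a vertex really is the one coming from the power-map charts — but this is routine once the reflection principle has been invoked to glue adjacent faces, and it is already sketched in the introduction's discussion of why the complex structure extends over vertices.
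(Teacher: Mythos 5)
Your proposal is correct, and it establishes both directions by the same underlying Schwarz-reflection dictionary the paper uses: pull back the two-triangle decomposition of $\Ch$ by $[-1,1]\cup[1,\infty]\cup[-\infty,-1]$ for one direction, and glue half-plane maps along edges for the other. The "Belyi $\Rightarrow$ triangulable" direction is essentially identical to the paper's; in both, a ``generalised triangulation'' (triangles possibly sharing more than one edge) appears first, and one passes to a genuine triangulation by barycentric subdivision~-- your ``subdividing as needed'' is precisely this step, which the paper makes explicit.

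The ``triangulable $\Rightarrow$ Belyi'' direction is where you take a genuinely different (though equivalent) technical route. You first perform a barycentric subdivision, which is bipartite on faces and $3$-colourable on vertices; you then pin down each $\phi_T$ by the colouring and set $f|_T = \psi\circ\phi_T$ or its conjugate according to the face colour, gluing by Schwarz reflection and verifying holomorphicity at vertices via the power-map charts. The paper instead defines $f(z)=F_3(\psi(\phi_T(z)))$ directly on the given triangulation, where $F_3(z)=\tfrac12(z^3+z^{-3})$ is a degree-$6$ rational map invariant under the full dihedral symmetry group of $\Delta$; this invariance makes the definition independent of the rotational ambiguity in $\phi_T$, so no subdivision or colouring is needed, and well-definedness across edges is automatic. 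The two constructions yield the same Belyi function: $F_3\circ\psi$ on a single triangle is exactly the map one obtains by barycentrically subdividing that triangle and applying your half-plane recipe (vertices $\mapsto 1$, edge midpoints $\mapsto -1$, barycenter $\mapsto\infty$). This is precisely the content of the paper's Remark~\ref{rmk:3colouring}, so your argument is sound~-- it is just the ``unfolded'' version of the paper's more compact formula. One small slip of wording: the ``three reflection symmetries of $\Delta$'' are antiholomorphic involutions fixing a vertex and an opposite edge midpoint, not a full edge; what you actually invoke in the gluing step is the anticonformal reflection across a shared edge into the neighbouring triangle, and that part of your argument is stated correctly, so this does not affect the proof.
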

    \begin{proof}
       Proposition~\ref{prop:equivalence} is well-known in the compact case;
        see~\cite{voevodskiishabat}, and 
        the proof in the general case is
         the same~\cite[\S1.3]{bowersstephenson}.
          For the reader's convenience, we present it briefly. 
         First suppose that $f\colon X\to \Ch$ is 
         a Belyi function. Consider the generalised triangulation of the sphere 
         into two triangles corresponding to the upper and lower half-plane, with 
         vertices at $1$, $-1$ and $\infty$. By the Schwarz reflection principle
         and Lemma~\ref{lem:reflection}, this
         triangulation is equilateral. Since the critical values of $f$ are at the
         vertices of the triangulation, we may lift it to $X$, to obtain a generalised
         equilateral triangulation. As discussed above, a barycentric subdivision 
         leads to a triangulation in the stricter sense, and the proof of the 
         ``if'' direction is complete.

       Now suppose that an equilateral triangulation of the surface $X$ is given. Let
         $\mathcal{T}$ be the corresponding collection of topological triangles,
         with conformal maps $\phi_T\colon T\to \Delta$ for $T\in\mathcal{T}$, as above. 
         Let $\psi\colon \Delta \to \D$ be the conformal isomorphism 
           that fixes $0$ and $1$, and
         consider the function 
            \[ f\colon X\to \Ch; \colon z\mapsto F_3(\psi(\phi_T(z))) \qquad (z\in T),\] 
           where $F_3$ is the degree $6$ rational map 
           \[
          F_3(z) := \frac{1}{2}(z^3+z^{-3}).  \]

       Let $\rho$ denote rotation by {60{\degree}} around $0$,
        and let $\sigma$ denote  
         complex conjugation. 
        Observe that $\psi$ commutes with both operations,
         and that $F_3 \circ \rho = F_3 \circ \sigma = F_3$ on $\partial \D$. 
          The group of symmetries of $\Delta$ is generated by $\rho$ and $\sigma$, 
          and thus 
          $f$ is indeed a well-defined holomorphic function on $X$. Clearly $f$ is
          a branched covering with no critical values outside of $-1$, $1$ and $\infty$; so
          $f$ is a Belyi function.
    \end{proof}
    \begin{rmk}\label{rmk:3colouring}
     The generalised 
       equilateral triangulation obtained from the Belyi function $f$ in the above 
       proof is \emph{3-colourable}: Its vertices may be coloured with the three colours
       $\{-1,1,\infty\}$ in such a way that adjacent vertices have different colours. 
       Conversely, suppose $\mathcal{T}$ is a generalised equilateral triangulation
       together with a 3-colouring of its vertices; let us call this a \emph{3-coloured}
       triangulation. Then the three vertices of any triangle $T\in \mathcal{T}$ may
       be coloured with the three different colours $-1$, $1$ and $\infty$, and 
       we may map $T$ 
       conformally to either the upper or lower half-plane in such a way that 
       each vertex corresponds to the point indicated by its colour.
       By Schwarz reflection the collection of these conformal maps
       extends to a Belyi function on $X$. Hence the Belyi functions on $X$ are
       in one-to-one correspondence with the 3-coloured generalised equilateral
       triangulations on $X$. 
       
      Not every equilateral triangulation $\mathcal{T}$ 
       (generalised or otherwise) can be 3-coloured;
        consider, for example, the triangulation of the sphere into four 
        congruent spherical equilateral triangles. However, the barycentric 
        subdivision of $\mathcal{T}$ is always $3$-colourable; indeed, we may 
        mark the original vertices with the colour $1$, the new vertices added on
        existing edges with the colour $-1$, and the vertices added in each face 
        with $\infty$ (Figure~\ref{fig:barycentric}). This yields precisely the triangulation corresponding
        to the Belyi function in the ``only if'' direction of 
        Proposition~\ref{prop:equivalence}.
    \end{rmk}

\begin{figure}
\begin{center}
\subcaptionbox{The complex plane $\C$\label{fig:triangular-lattice}}{\includegraphics[height=0.17\textheight]{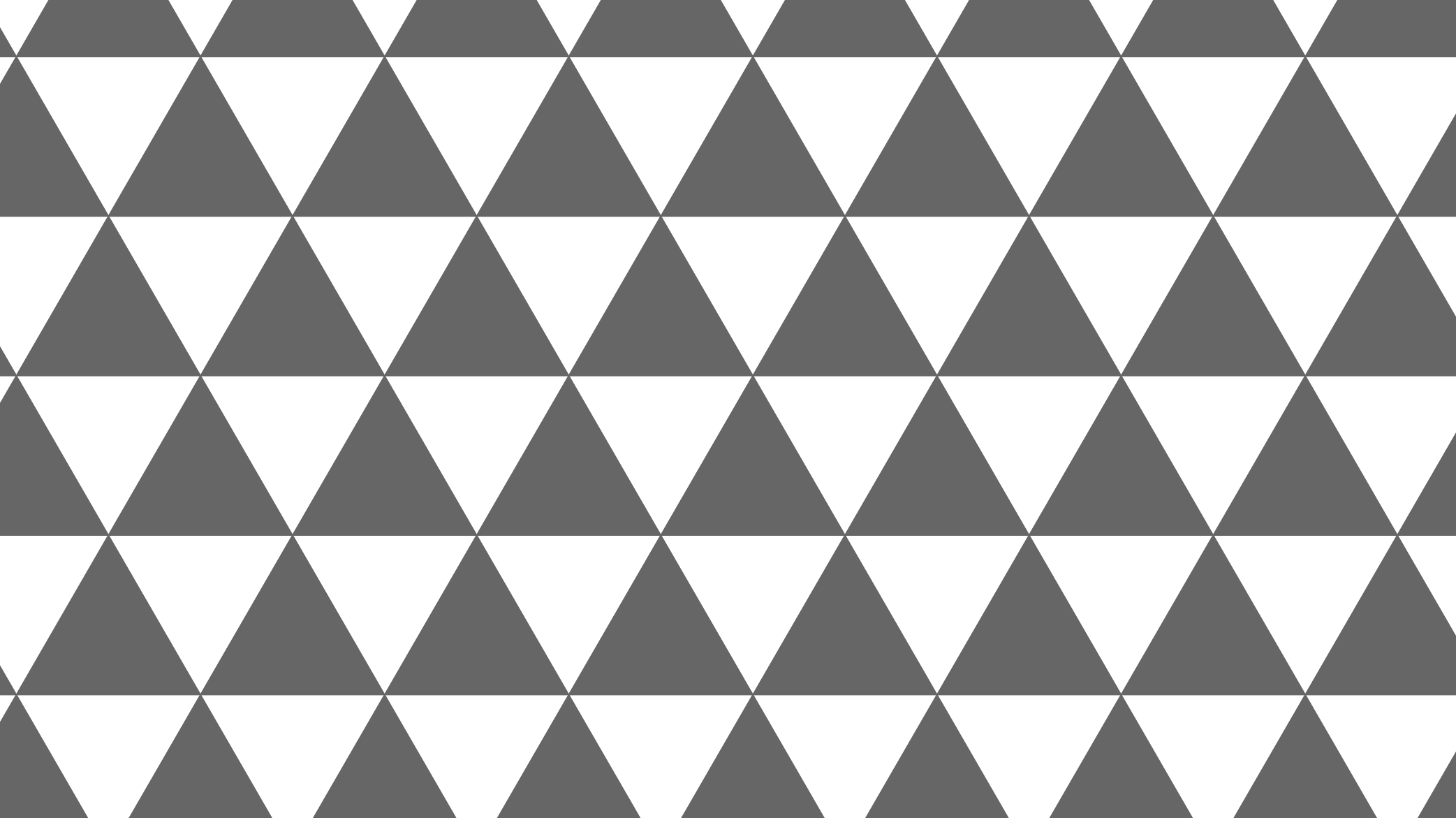}}%
\hfill%
\subcaptionbox{The punctured plane $\C^*$}{\includegraphics[height=0.17\textheight]%
{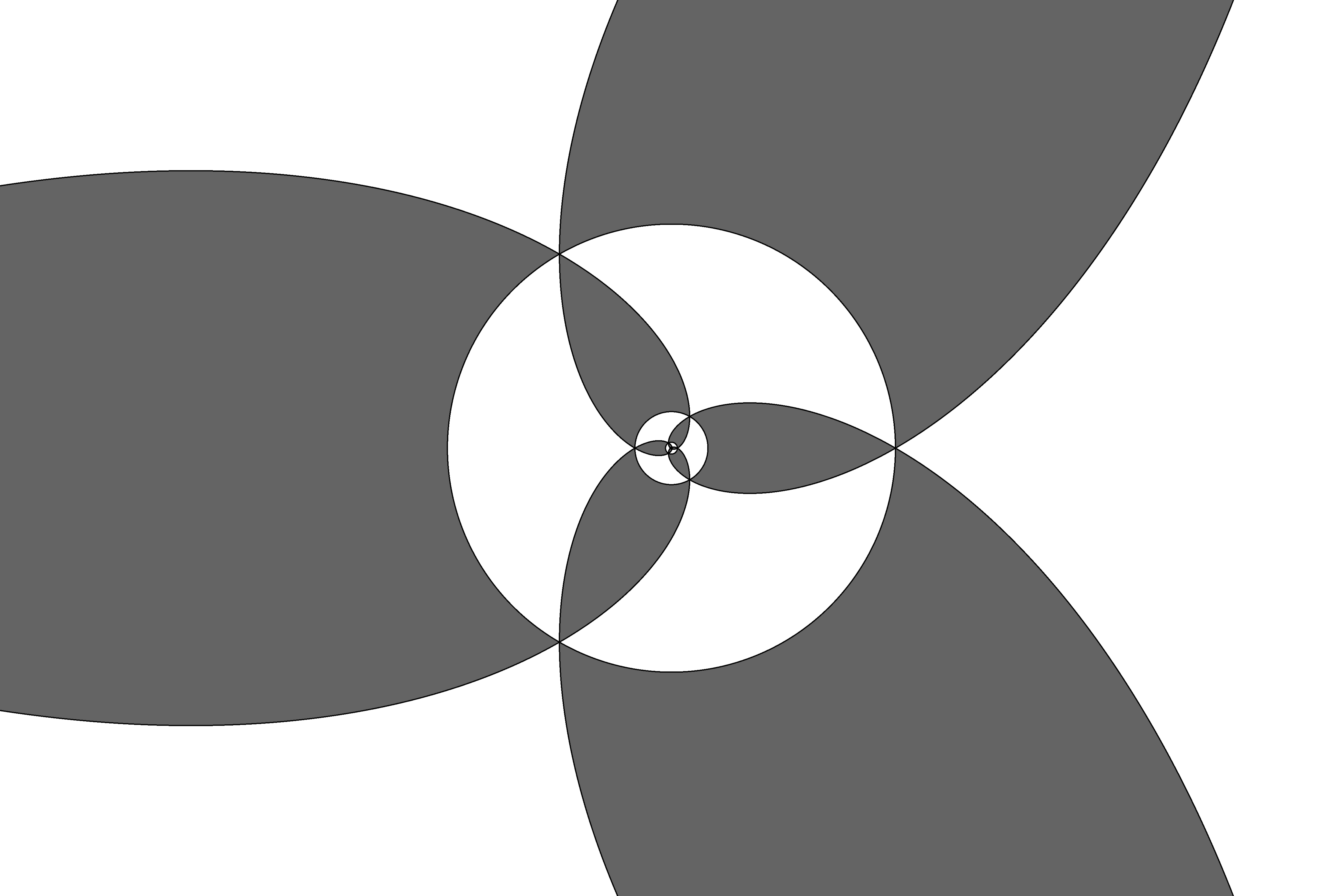}}
\end{center}\mbox{}\\ \vspace{.5cm} 
\begin{center}
\subcaptionbox{The three-punctured sphere\label{fig:3-punctured}}%
{\includegraphics[height=0.26\textheight]{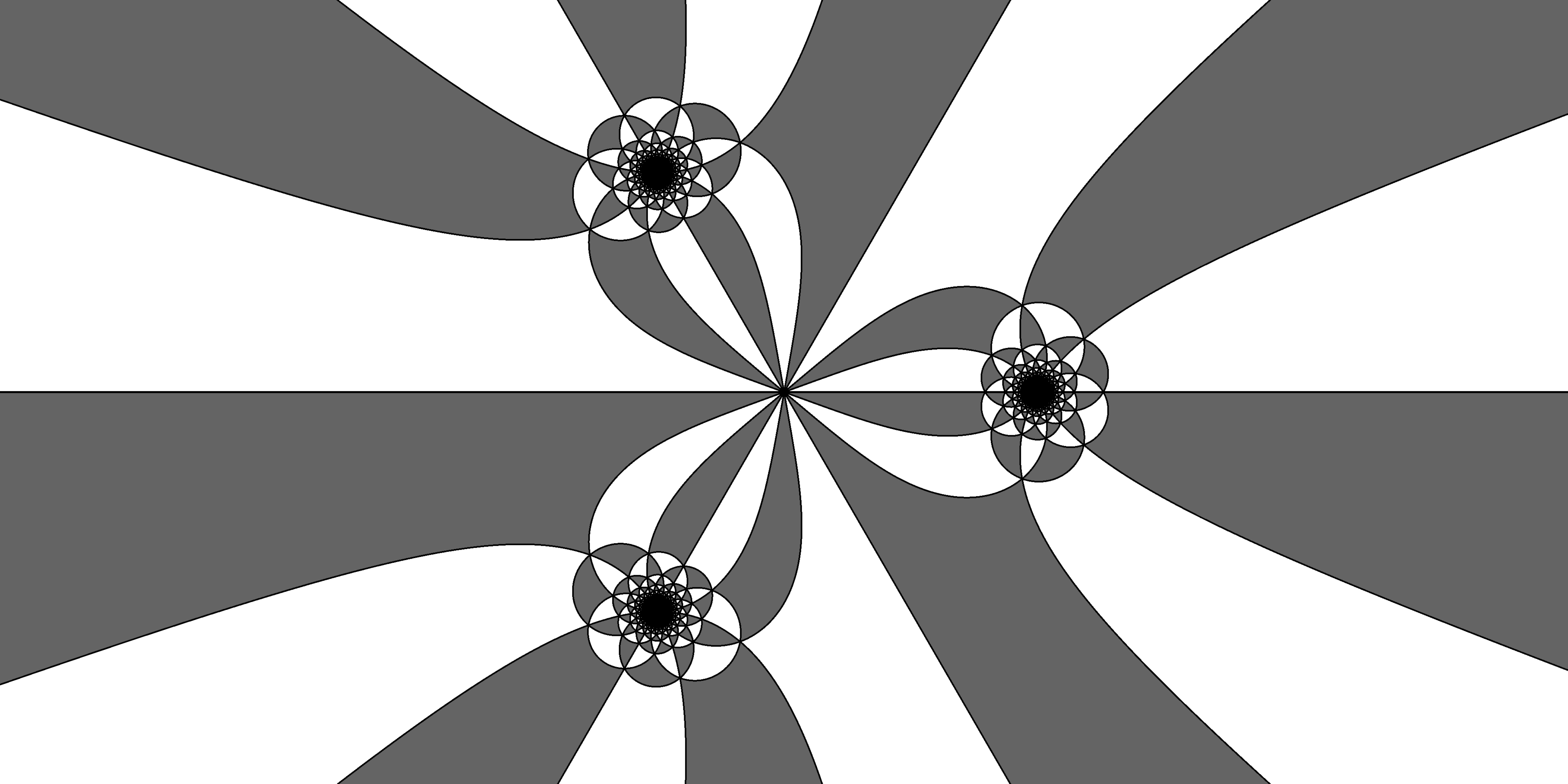}}%
\end{center}\mbox{}\\ \vspace{.5cm}
\begin{center}
\subcaptionbox{The unit disc $\DD$\label{fig:disc}}{\includegraphics[height=0.3\textheight]{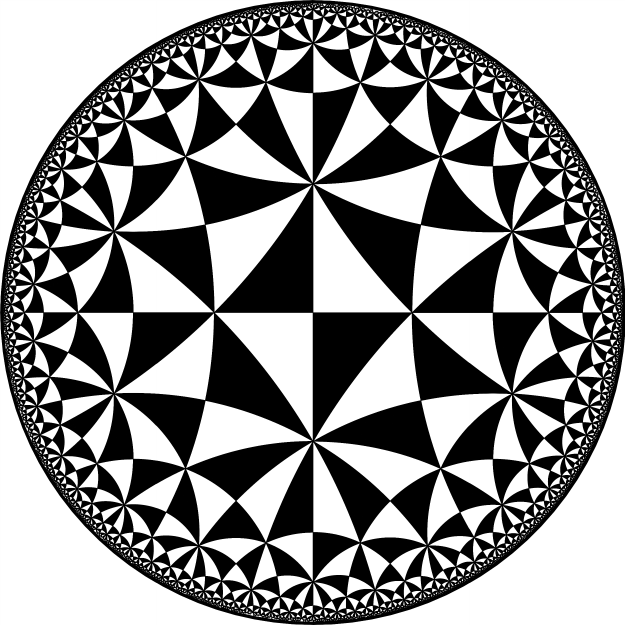}}\hfill
\subcaptionbox{The punctured disc $\DD^*$\label{fig:punctureddisc}}{\includegraphics[height=0.3\textheight]{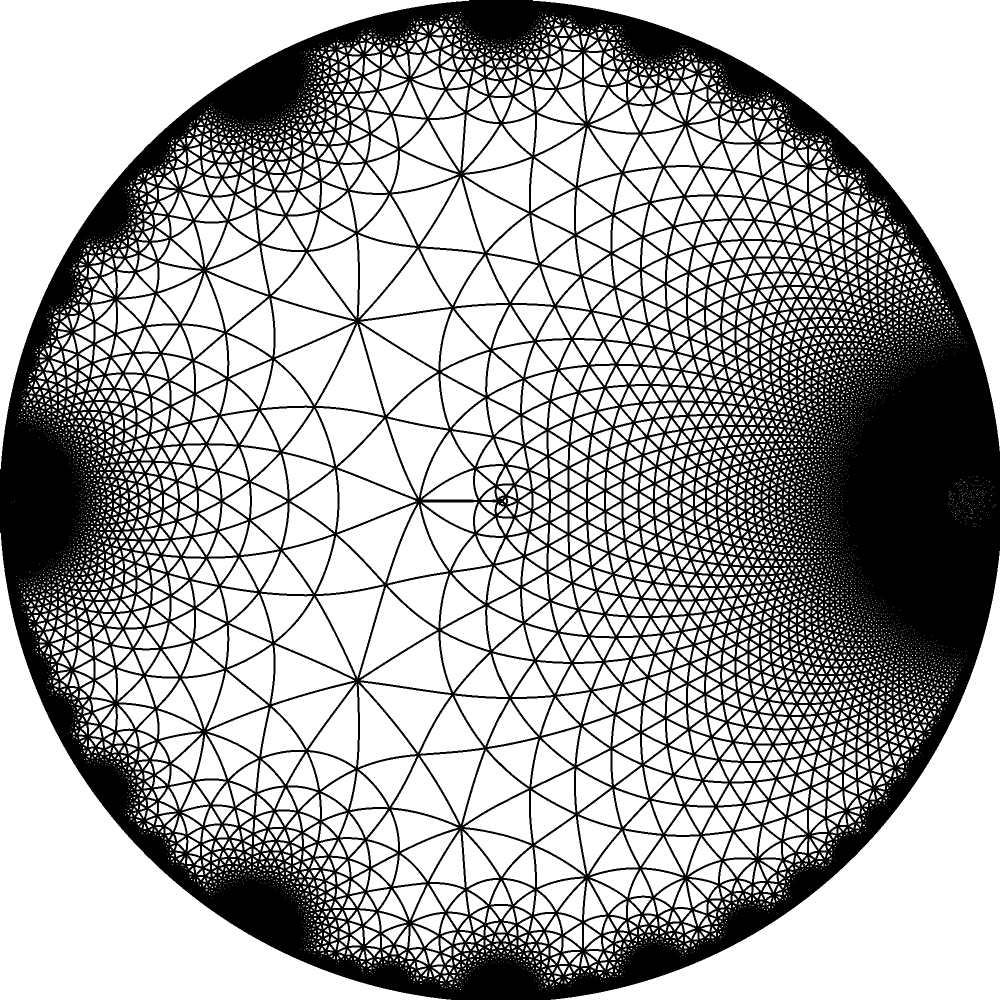}}
\end{center}
\caption{\label{fig:trivialcases}Equilateral triangulations of non-compact Riemann surfaces with trivial moduli spaces.}
\end{figure}

 \subsection*{Elementary cases of Theorem~\ref{thm:triangulation}}    
    The triangular lattice, which tesselates the
      plane into equilateral triangles, provides an equilateral
      triangulation of both the plane and the bi-infinite cylinder; i.e., the punctured
      plane. This triangulation is $3$-colourable; 
      the corresponding Belyi function is elliptic, and can
      be described as the universal orbifold covering map of the sphere with signature
      $(3,3,3)$; see~\cite[Appendix~E]{milnordynamics}. 
      
   The unit disc $\DD$ is equilaterally triangulated by the classical hyperbolic triangle 
     groups. We may obtain an equilateral triangulation of the punctured disc $\DD^*=\DD\setminus\{0\}$ as follows. 
     The \emph{Klein $j$-invariant} $j\colon \HH^+\to\C$ is a branched covering map from the 
     upper half-plane $\HH^+$ to the complex plane which is invariant under the modular group and has
     only two branched values, which we may arrange to be $0$ and $1$. In particular,
     $j(\zeta +1)=j(\zeta)$ for all $z\in\HH^+$, and hence $J\colon\DD^*\to\C; z\mapsto j( \log z / (2\pi i) )$ is a well-defined 
     branched covering map with branched values $0$ and $1$.   
     Let $\T$ be a triangulation of the complex plane for which $0$ and $1$ are vertices 
     (for example, the triangular lattice $\T_{\eucl}$ discussed above, chosen such that $[0,1]$ is the edge of one of the triangles). 
     Then the preimage of $\T$ under $J$ is an equilateral triangulation of $\D^*$; see Figure~\ref{fig:punctureddisc}.

    A similar construction leads to triangulations of multiply-punctured spheres. Note that 
      \[  g\colon \Ch\to\Ch; \quad z\mapsto \frac{z^n}{z^n-1} \] 
      is a degree $n$ branched covering of the sphere, branched over $0$ and $1$. 
      The preimage under $g$ of an equilateral triangulation $\T$ of $\Ch\setminus \{1\}$ (for example, 
      the image of the triangular lattice under $z\mapsto (z+1)/z$)
      is an equilateral triangulation
      of the sphere punctured at the $n$-th roots of unity; see 
       Figure~\ref{fig:3-punctured}.

   In particular, the thrice-punctured sphere is equilaterally triangulable, 
     and there exist equilaterally triangulable $n$-punctured spheres for all $n$. 
     However, 
     for $n>3$, we have equilaterally triangulated only 
     one specific member of the moduli space of 
     $n$-punctured spheres, which has positive dimension. We may 
     obtain others by modifying the construction, e.g.\ by using different
     degree $d$ covering maps whose critical values lie in the triangular lattice. 
     Nonetheless, this yields at most countably many different
     surfaces among the uncountably many possible choices. 

\section{Triangulations of hemmed Riemann surfaces}\label{sec:foldingnew}
  Let $U$ be a hemmed Riemann surface, in the sense of Definition~\ref{defn:hemmed}. Our goal in this section
   is to show that 
   there is a triangulation of $U$ that is close to an equilateral triangulation, in 
   a quasiconformal sense. Moreover, in boundary coordinates, the triangulation
   will simply subdivide each boundary circle $\gamma$ into a large
   number $d^{\gamma}$ of equal arcs, where the $d^{\gamma}$ can be chosen
   independently of each other as long as they are sufficiently large. This will later allow us to
   glue together triangulations of different finite pieces of a given Riemann surface. 
   
 To make this statement precise, we use the following notion. 
   \begin{defn}
       Let $d\geq 1$, and let 
     \[ \Xi_d \defeq \{e^{2\pi i j/d}\colon
                      j\in\Z\} \]
        denote the set of all $d$-th roots of unity. We call $\Xi_d$ the 
        \emph{standard partition} of $S^1$ of size $d$; the 
        intervals of $S^1\setminus \Xi_d$ are called the
        \emph{edges} of the partition.
   \end{defn}
   
\begin{prop}[Triangulations on hemmed Riemann surfaces]\label{prop:hemmedtriangulation}
  There are $K_0 >1$, $s_0\geq 6$ and 
    a function $\Deltafn\colon (1,\infty)\to \N$, with the following property. 
  
  Let $U$ be a hemmed Riemann surface with 
     boundary coordinates 
   \[ \phi^{\gamma} \colon \AA_-(R^{\gamma})  \to A^{\gamma}. \] 
  Denote the set of all boundary curves by $\Gamma$, and let $\rho$ be a conformal metric on $U$. 
    Fix
     $d^{\gamma}\geq \Deltafn(R^{\gamma})$ for each $\gamma\in\Gamma$, and let $\eta>0$.
  
  Then there is a homeomorphism
    $g$ from $\overline{U}$ to a finite equilateral surface-with-boundary $E$ such 
    that the following hold. 
  \begin{enumerate}[(a)] 
    \item Every vertex of $E$ is incident to at most $s_0$ edges.\label{item:angles} 
    \item For $\gamma\in\Gamma$, 
         the map $g\circ \phi^{\gamma}\colon S^1\to g(\gamma)$ maps each edge of
           $\Xi_{d^{\gamma}}$ to a boundary edge of $E$ in length-respecting fashion.\label{item:length-respecting} 
    \item $g$ is $K_0$-quasiconformal on $U$.\label{item:gqc}
    \item The  complex dilatation of $g$ is supported on the
    union $\bigcup_{\gamma} A^{\gamma}$, together with a set
  that has measure at most $\eta$ with respect to the metric $\rho$. 
   \end{enumerate} 
 \end{prop} 
 \begin{remark}[Remark 1]
    A map \emph{respects length} if it changes distances by a constant
      factor~\cite[\S4]{BishopFolding}. 
In other words, if $e$ and $f$ are two rectifiable arcs, then a homeomorphism 
$\psi\colon e \to f$ is 
length-respecting if, for any measureable set $X \subset e$, we have 
$\ell(\psi(X)) = \ell(X) \cdot \ell(f)/\ell(e)$, where $\ell$ denotes 
arc-length measure. There are exactly two such maps between $e$ and $f$, 
depending on how the endpoints of $e$ are mapped to the endpoints of $f$. 
If $e$ and $f$ have the same endpoints and these are fixed by $\psi$, 
then $\psi$ is unique.
      
    Here length on $E$ is measured with respect to the       
      natural distance inherited from its representation by equilateral triangles. This is a flat conformal
     metric, except possibly for cone singularities at the vertices of the triangulation.

   We may rephrase~\ref{item:length-respecting} more explicitly as follows.  Let $e$ be an edge of $\Xi_{d^{\gamma}}$.
       Then $g(\phi^{\gamma}(e))$ is a boundary edge of $E$; let $T$ be 
        the unique adjacent face. 
        By the definition of an equilateral surface, $T$ is a copy of
     a planar equilateral triangle; in these coordinates, 
      $g\circ\phi^{\gamma}\circ \exp$, restricted to
      a component of $\log e$, is required to be 
      the restriction of a complex affine map.
 \end{remark}
 \begin{remark}[Remark 2]
   It is crucial that the number 
    $d_{\gamma}$ can be chosen arbitrarily large on each
    boundary curve $\gamma$, independently of the choice for the others. 
 \end{remark}
 
  The idea of the proof of Proposition~\ref{prop:hemmedtriangulation} can be summarised as follows.
   \begin{enumerate}[(I)]
    \item By cutting along finitely many essential curves, we may assume that $U$
        has genus $0$, and hence is a subset of the plane.\label{item:reductiontogenus0}
   \item We cover most of the domain $U$ (more precisely, 
       a domain obtained from
      $U$ by removing an annulus contained in $A^{\gamma}$ 
      for each $\gamma\in\Gamma$) with 
      small Euclidean equilateral triangles arranged in a triangular lattice.\label{item:stepII}
    \item We are left with finitely many annuli, one in each $A^{\gamma}$, between $\gamma$
      and a curve consisting of edges taken from the above lattice. We interpolate
       between the 
      partitions of these two boundaries by a triangulation that has \emph{bounded geometry}, and hence is
      quasiconformally equivalent to an equilateral triangulation.\label{item:annulitriangulation}
   \end{enumerate}
    
 We begin by developing a number of elementary lemmas 
   that will be used in the final step~\ref{item:annulitriangulation}. 
   The first goal is to show that, when we cover a surface as in
   step~\ref{item:stepII}
    by sufficiently small equilateral triangles, we obtain an equilateral surface-with-boundary with the same number
   of boundary curves, and furthermore the edges of these boundary curves are not close to being perpendicular
   to the original boundaries.
   
  The next lemma makes this idea more precise. For $\eps>0$, 
      let $\mathcal{L}_{\eps}$ be a tiling of the plane by (closed) Euclidean equilateral triangles of
      side-length $\eps$, as in Figure~\ref{fig:triangular-lattice}. 
      If $\gamma\subset\C$ is a smooth Jordan curve, define $\Omega(\gamma,\eps)$ to be 
      the union of all triangles of $\mathcal{L}_{\eps}$ that intersect $\gamma$; see Figure~\ref{fig:DefnOmega}.
      By a \emph{segment} of $\partial \Omega(\gamma,\eps)$ we mean an edge of a triangle in the triangulation
      $\mathcal{L}_{\eps}$ that lies on $\partial \Omega(\gamma,\eps)$.

\begin{figure}
  %\vspace{3ex plus .5ex minus .2ex}
  \begin{overpic}[percent, width=.6\textwidth]{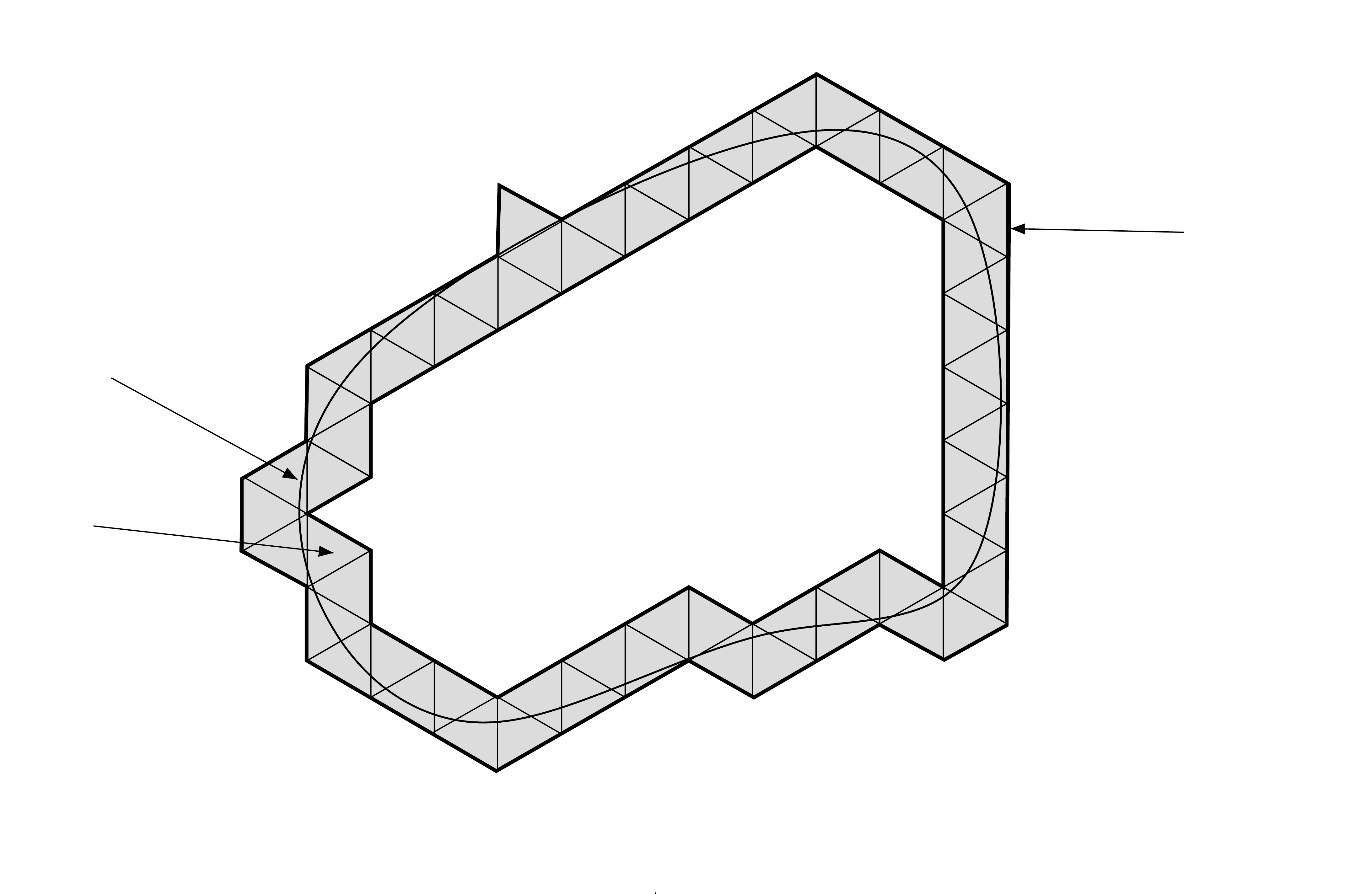}
  \put(89,47){$\partial \Omega(\gamma,\eps)$}
  \put(5,37){$\gamma$}
  \put(2,23){$\Omega(\gamma,\eps)$}
  \end{overpic}\\[2ex]
  \caption{\label{fig:DefnOmega}
        Definition of $\Omega(\gamma,\eps)$. 
        }
\end{figure}

 \begin{lem}[Boundary approximation]\label{lem:boundaryapprox}
   Let $\gamma\subset\C$ be a smooth Jordan curve. If $\eps>0$ is sufficiently small, then the following hold.
    \begin{enumerate}[(a)]
     \item 
       The equilateral surface-with-boundary $\Omega(\gamma,\eps)$ is a closed topological annulus that is bounded by two Jordan curves.
    \item Suppose that $I$ is a segment of $\partial \Omega(\gamma,\eps)$. If $x\in\gamma$ is within
    distance $\eps$ of $I$, then the tangent line of $\gamma$ at $x$ makes an angle at most $(5/12)\pi$ with
    the line containing $I$. (By convention, two parallel lines make an angle of $0$ with each other.)\label{item:angle}
    \end{enumerate}
 \end{lem}
 \begin{proof} 
  By definition $\Omega(\gamma,\eps)$ is an equilateral surface-with-boundary. 
Fix $\delta>0$ sufficiently small 
  (as we see below, $\delta < \pi/12$ suffices). 
Since $\gamma$ is a smooth Jordan curve, for sufficiently small
$\eps\in (0,\diam(\gamma))$ the following holds. If $x,y\in \gamma$ 
and $\lvert x - y\rvert < 3\eps$, then the tangent lines of $\gamma$ at $x$ and $y$ 
differ by an angle that is less than $\delta$. Furthermore, if $\eps$ is small enough, then 
$\gamma\cap D(x,3\eps)$ is an arc for all $x\in\gamma$. 
This arc is then contained in the union of two sectors of opening angle $2\delta$, 
 centered at the tangent line to $\gamma$ at $x$. 

 Suppose that $I$ is a segment of $\partial \Omega(\gamma,\eps)$. Then $I$ is a side of a triangle
  $T$ of $\mathcal{L}_{\eps}$ that hits $\gamma$, while the other triangle that
  has $I$ as an edge does not. In particular, $I\cap \gamma =\emptyset$. Fix a point 
  $x\in \partial T\cap \gamma$. 
   
 Let $\theta$ be an isometry of the plane that maps $x$ to $0$ and the tangent line to $\gamma$ 
   through $x$ to the real axis. Then $\theta$ maps the arc $\gamma\cap D(x,3\eps)$ to points
   with arguments in $(-\delta,\delta)$ and $(\pi-\delta,\pi+\delta)$. Since $I$ does not intersect
   $\gamma$, this means that we may choose $\theta$ (composing with a rotation or reflection) 
      such that all points of $\theta(I)$ have
   	arguments between $-\delta$ and $\pi+\delta$. (See the left-hand side of 
   	Figure~\ref{fig:NotPerp}.) If $\theta(I)$ made an angle 
   	greater than $(5/12)\pi$ with the real axis, then the lowest side of 
   	$\theta(T)$ would make an angle greater than
   	$(5/12)\pi - \pi/3 = \pi/12 > \delta$ with the real axis. But this easily implies
   	that $\theta(T)$ lies entirely in the sector at arguments in $(-\delta,\pi+\delta)$, contradicting the
   	fact that $0\in \theta(T)$. (See the right-hand side of Figure~\ref{fig:NotPerp}.)  This concludes the proof
   	of~\ref{item:angle}. 
   	
  If $T\in\mathcal{L}$ is a triangle that intersects $\gamma$ in some point $x$, then triangles
   adjacent to $T$ can intersect $\gamma$ only in the segment $\gamma\cap D(x,2\eps)$, which is
   again contained in the union of two thin sectors. 
   It follows that every segment of $\partial \Omega(\gamma,\eps)$ is adjacent
   to exactly two other such segments. So every connected component of $\partial \Omega(\gamma,\eps)$ is
   a Jordan curve. The same fact implies that every boundary component intersects a $2\eps$-neighbourhood
   of every point of $\gamma$. It follows that there are only two such components, one inside and one
   outside of $\gamma$. 
\end{proof}

\begin{figure}
\begin{center}
  \begin{overpic}[percent,grid=false,height=2in]{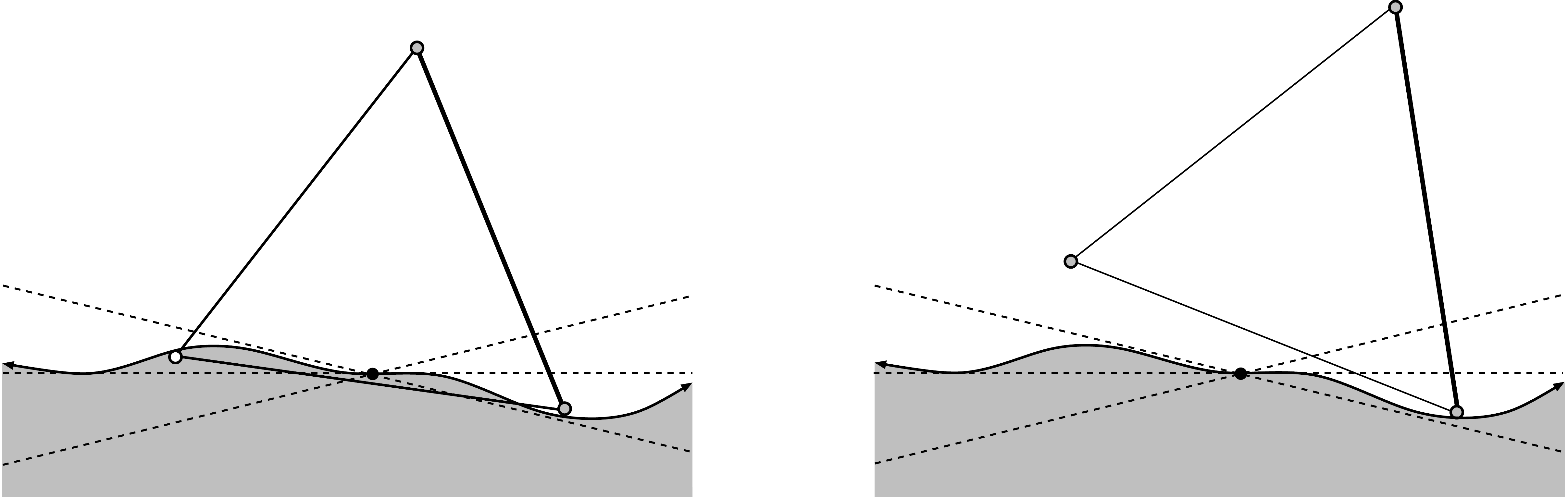}
%  \put(10,6.5){\Large $v$}
  \put(22.8,9){\Large $0$}
  \put(33,13.7){\Large$\theta(I)$}
%  \put(31.9,20){\Large$1$}
%  \put(47,7.5){\Large$2\delta$}
  \put(78.5,4){\Large $0$}
  \put(92.5,13.7){\Large$\theta(I)$}
  \end{overpic}
\end{center}
	\caption{\label{fig:NotPerp}  
	Proof that $\alpha(\gamma)$ is not close to being perpendicular to $\gamma$.}
\end{figure}

We wish to apply Lemma~\ref{lem:boundaryapprox} 
  to the core curve 
 of an annulus $A$ (where $A$ is one of the hems $A^{\gamma}$ of the Riemann surface $U$ in Proposition~\ref{prop:hemmedtriangulation}). 
  More precisely, let 
 $\Phi\colon \AA_+(R)\to A\subset\C$ be a conformal isomorphism (for some $R>1$). 
  Let $\gamma$ be the 
  connected component of the boundary of $A$ (in $\Ch$) 
  corresponding to the limit set of 
  $\Phi(z)$ as $\lvert z\rvert \to 1$. (In our applications, 
  $\Phi$ extends analytically to $\AA_+(R)\cup\overline{\DD}$, and in particular
  $\gamma  = \Phi(S^1)\subset\C$ is an analytic curve. However, we do not require this
  property here.) Let
  $\tilde{\gamma}$ be the core curve of $A$; i.e., $\tilde{\gamma} \defeq \Phi(\partial D(0,\sqrt{R}))$. Suppose
  that 
  $\eps$ is small enough so that the conclusion of
  Lemma~\ref{lem:boundaryapprox} holds for $\Omega(\gamma,\eps)$, and let 
  $\alpha$ be the connected
  component of $\partial \Omega(\gamma,\eps)$ that separates $\gamma$ from $\tilde{\gamma}$. 
  Our goal is to relate the annulus bounded by $\gamma$ and $\alpha$ to that
  bounded by $\gamma$ and $\tilde{\gamma}$ using a suitable quasiconformal homeomorphism. 
 
  It is useful to lift our picture by the exponential map. 
  Set
\[ \Sigma_+ \defeq \exp^{-1}(\Phi^{-1}(\alpha)); \]
see Figure~\ref{fig:DefnS}.
Let $V_+$ and $E_+$ denote the sets of vertices and edges
of $\Sigma_+$; that is, the preimages of the vertices and edges of the polygonal curve $\alpha$ under $\Phi\circ\exp$. 
Also let $\Sigma_-$ denote the imaginary axis, and let $S$ denote the domain bounded by $\Sigma_-$ on the left and $\Sigma_+$ on the right. So 
$\Phi\circ \exp$ maps the topological strip $S$ 
to the annulus bounded by $\alpha$ and $\gamma$ as a 
universal covering map. Also define the vertical strip 
\[ \tilde{S} \defeq \{ x + iy\colon 0 < x < \rho \defeq (\log R)/2 \}, \]
and let $\tilde{\Sigma}_+ = \rho + i\R$ be its right boundary. 
Note that $\Phi(\exp(\tilde{S}))$ is bounded by $\gamma$ and $\tilde{\gamma}$, and in particular $S\subset \tilde{S}$.  
(See Figure~\ref{fig:DefnS}.) 

All of the above objects depend on $\eps$, $R$ and $\Phi$. We show the following.

\begin{figure}
\begin{center} 
 \begin{overpic}[percent,grid=false,height=2in]{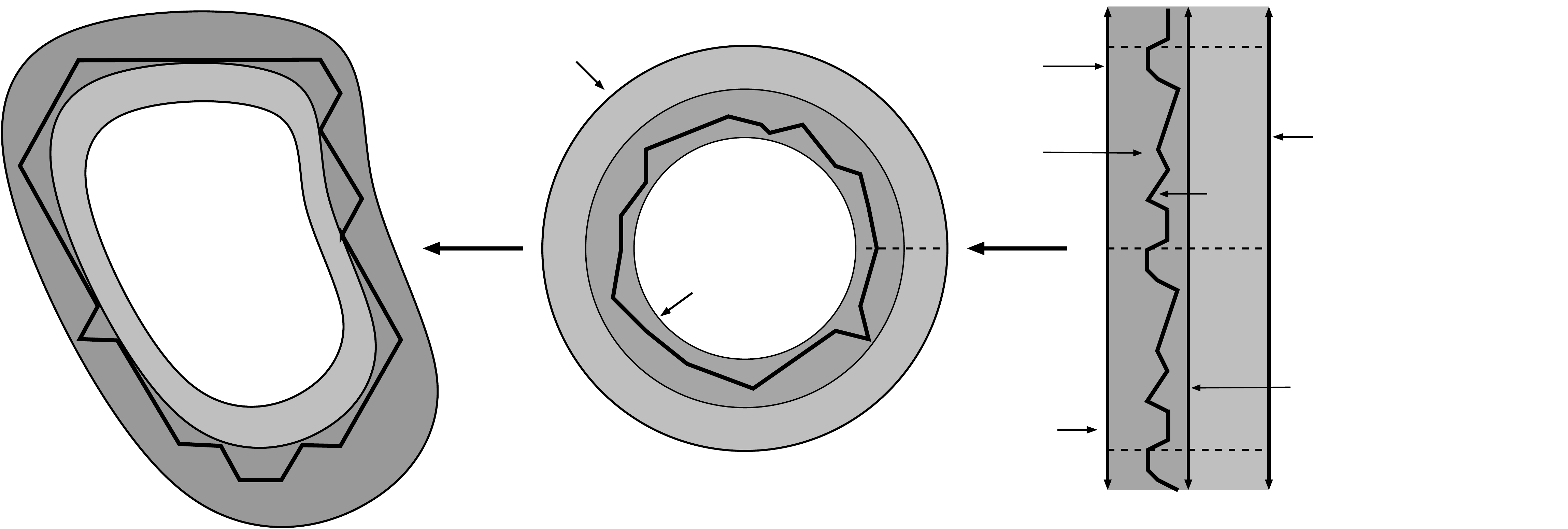}
	 \put(27,31){$\{|z|=R^\gamma$\}}
  \put(29,19){$\Phi$}
  \put(42,16){$\{|z|=1\}$}
  \put(63,19){exp}
  \put(63,29){$\Sigma_-$}
  \put(63,23.5){$\Sigma_+$}
  \put(58.0,5.8){$\{x=0\}$}
  \put(71.5,13){$S$}
  \put(78,20.5){$\widetilde S$}
  \put(84,24.5){$\{x=2\rho\}$}
  \put(83,8.5){$\widetilde \Sigma_+=\{x=\rho\}$}
  \end{overpic}
\end{center} 
  \caption{\label{fig:DefnS}
	The definition of the strips $S$ and $\tilde{S}$.
  }
\end{figure}

\begin{lem}[Quasiconformal map straightening $S$]\label{lem:qcstripmap}
There are universal constants $K_1>1$ and $\lambda_1>1$ with the following property. Let $R>0$, let 
$\Phi\colon \AA_+(R)\to A\subset\C$ be a conformal isomorphism, and let 
$\eps>0$ be sufficiently small. Then (using the notation introduced above) there is 
a $K_1$-quasiconformal homeomorphism 
$ \Psi \colon  S\to \tilde{S}  $
such that:
\begin{enumerate}[(a)]
\item $\Psi(z+2\pi i) = \psi(z)+2\pi i$ for all $z$;\label{item:commuteswithtranslation}
\item $\Psi(z)= z$ for $z\in \Sigma_-$;\label{item:identityonleft}
\item $\Psi(V_{+})$ contains $\rho=(\log R)/2$;\label{item:real}
\item the length of the intervals in $\tilde{E}_+\defeq \{\Psi(e)\colon e\in E_+\}$
is bounded above by $\rho$;\label{item:lengthupperbound}
\item the lengths of adjacent intervals in $\tilde{E}_+$ differ at
most by a factor of $\lambda_1$;\label{item:boundedgeometry}
\item Let $\tilde{e}\in \tilde{E}_+$. Then 
$\Phi\circ \exp \circ \Psi^{-1}$ respects length  on 
$\tilde{e}$.\label{item:lengthrespectinginneredge}
          \end{enumerate}
\end{lem}

The basic idea of the proof of the lemma is straightforward: We first map each horizontal line of $S$ linearly to the corresponding
horizontal line of $\tilde{S}$. Lemma~\ref{lem:boundaryapprox} ensures that this gives a quasiconformal map 
$h\colon S\to\tilde{S}$ whose maximal
dilatation
is bounded by a universal constant. (If $\eps$ is small enough, the edges of $\Sigma_+$ are almost straight line 
segments, by Koebe's theorem.) The resulting map satisfies all desired properties
 except~\ref{item:real} and~\ref{item:lengthrespectinginneredge}; the latter two are easily ensured by further 
 quasiconformal changes arbitrarily close to the identity. 
  To provide the details, we require the following elementary result.
 
\begin{lem}[Bi-Lipschitz map between strips] \label{lem:biLip_est} 
Suppose $0 < r < 1$ and $L < \infty$ and suppose 
$\chi\colon {\mathbb R}  \to  {\mathbb R}$ is $L$-Lipschitz and satisfies 
$ r \leq \chi(y)   \leq  1/r $ for all $y\in\R$.
Set $S_1\defeq\{ (x,y)\in\R^2\colon 0< x < \chi(y)\}$ and 
	$S_2\defeq\{ (x,y)\in\R^2\colon 0 < x < 1\}$. 
Then the map $\Theta\colon S_1 \to S_2$ defined by $\Theta(x,y) = 
(x/\chi(y),y)$ is bi-Lipschitz and hence quasiconformal. The 
bi-Lipschitz constant and the maximal dilatation of
$\Theta$ are bounded by a constant depending only on $r$ and $L$. 
\end{lem}
\begin{proof}
	Observe that 
\begin{eqnarray*}
	|\Theta(x,y) - \Theta(a,b)| 
	&\leq & |y-b| + \left|\frac{x-a}{\chi(y)}\right| 
	+ \left|\frac {(\chi(y)-\chi(b))a} {\chi(y) \chi(b) }\right| \\
	&\leq & |y-b| + \frac{\lvert x-a\rvert }{r}
	+ \frac {L| y-b| }{r^3} ,
\end{eqnarray*} 
and hence $\Theta$ is Lipschitz with a constant 
depending only on $r$ and $L$. The inverse of 
$\Theta$ is defined on $S_2$ by the analogous formula for 
$\psi = 1/\chi$, which satisfies $r\leq \psi \leq 1/r$
and is $L/r^2$-Lipschitz. So the same calculation as above shows that the 
inverse is also Lipschitz. Thus $\Theta$ is bi-Lipschitz.

Since $\Theta$ is  bi-Lipschitz with constant depending only on 
$r$ and $L$, it is also quasiconformal with a constant
depending only on these two quantities. Alternatively, 
one can  compute $\Theta_{\overline z}$ and  $\Theta_z$ and 
prove directly that 
$|\mu| = |\Theta_{\overline z}/\Theta_z|$ is bounded by a constant strictly less than $1$.
\end{proof}

\begin{cor}[Map of horizontal segments]\label{lem:bilipschitzstripmap}
 In the setting of Lemma~\ref{lem:qcstripmap}, if $\eps>0$ is 
  sufficiently small, then 
  there is a bi-Lipschitz homeomorphism $h\colon S \to \tilde S$
  that maps horizontal segments to horizontal segments at the  
  same height, that is the identity on $\Sigma_-$, and whose bi-Lipschitz constant
  (and hence maximal dilatation) is bounded by a universal constant.  
  
  Moreover, set $\hat{E}_+\defeq \{h(e)\colon e\in E_+\}$. Then (for sufficiently small $\eps$) 
    the length of the intervals in $\hat{E}$ is bounded above by $\rho$, and the lengths of
    adjacent intervals in $\hat{E}_+$ differ at most by a universal multiplicative constant $\lambda_1$. 
\end{cor} 
\begin{proof} 
If $\eps$ is sufficiently small, then 
the map  $\exp^{-1} \circ \Phi^{-1}$ is conformal on 
a disk $D(x,M \eps)$ around any point $x \in \alpha$,
where $M$ is fixed, but may be taken as large as we wish if 
$\eps$ is sufficiently small. If $M$ is large enough then by 
Koebe's distortion theorem,  the map $\exp^{-1} \circ \Phi^{-1}$ 
is as close to a complex affine map on $D(x, 2 \epsilon)$ as we wish. In particular, 
the angle between any segment in $\alpha(\gamma)$  and  the tangent 
line of $\partial \tilde U$ at a point $y \in \partial \tilde U \cap 
D(x,2 \eps) $ is nearly preserved by the map. 
Since $\partial \tilde U$ is 
mapped to a vertical line, we deduce that  $\alpha$ is mapped to a
piecewise analytic curve 
whose tangents (where they exist) deviate from vertical by at most $(5/12)\pi+o(1) $.
The second term can be made as small as we wish by taking $\eps$ small enough, and hence $M$ large enough. 
In particular, we can choose $M$ and $\eps$ so that this angle is less than $ (11/24)\pi$. 
It follows that $\Sigma_+$ is the graph of an $L$-Lipschitz function, where $L = \tan((11/24)\pi)$ is a universal constant. 

For $y\in\R$ let $\chi(y)\leq 1$ be the number such 
 that $\rho\cdot (\chi(y)+iy)$ is the unique point of 
$\alpha$ at imaginary part $\rho y$. As we just saw, $\chi$ is $L$-Lipschitz.
Clearly also %%%The distortion theorem also implies that 
$\chi(y)\geq 1/2$ for all $y\in\R$ if $\eps$ is small enough. 
Let $\Theta$ be the map from Lemma~\ref{lem:biLip_est} and set
$h(z)\defeq \rho\cdot \Theta(z/\rho)$. Then $h$ is a homeomorphism from
 $S$ to $\tilde{S}$. Since $\Theta$ is bi-Lipschitz with universal bi-Lipschitz constant and universally bounded maximal dilation, the same is true of
  $h$. 
  
 Moreover, the lengths of the edges of $\Sigma_+$ tend to zero as
  $\eps\to 0$, and lengths of adjacent edges are comparable up to a universal factor
  by Koebe's distortion theorem. 
  Since $h$ is bi-Lipschitz, the same is true for the images of these 
  edges under $h$; i.e., the elements of
  $\hat{E}_+$.
\end{proof}

\begin{proof}[Proof of Lemma~\ref{lem:qcstripmap}]
Let $h\colon S\to\tilde{S}$ be the bi-Lipschitz map from Lemma~\ref{lem:bilipschitzstripmap} and 
$\hat{E}_+\defeq \{h(e)\colon e\in E_+\}$. The map already satisfies \ref{item:commuteswithtranslation},
\ref{item:identityonleft}, \ref{item:lengthupperbound} and~\ref{item:boundedgeometry}. We first
discuss how to modify $h$ as to obtain also property~\ref{item:lengthrespectinginneredge} on 
$\hat{E}_+$, by pre-composing $h$ with a suitable quasiconformal homeomorphism.

Let $\hat{e}$ be an element of $\hat{E}_+$. 
Since $\hat{e}$ and $\Phi(\exp(h^{-1}(\hat{e})))$ are both straight line
 segments, there is a unique affine map that maps the former to the latter and
 agrees with $\Phi\circ\exp\circ h^{-1}$ on the endpoints of $\hat{e}$. Define
 $\psi_1\colon \hat{e}\to \hat{e}$ to be the unique homeomorphism such that
 $\Phi\circ \exp \circ h^{-1}\circ \psi_1^{-1}$ agrees with this affine map, and hence
 respects length, on $\hat{e}$. 
 This defines a homeomorphism 
 $\psi_1\colon \tilde{\Sigma}_+\to\tilde{\Sigma}_+$. 
Applying Koebe's theorem again, we see that the derivative of 
 $\psi_1$ on $\tilde{e}$ tends to $1$ uniformly as $\eps\to 0$. 
Extend $\psi_1$ to a map $\psi_1\colon \tilde{S}\to \tilde{S}$ that agrees with the 
identity on $\Sigma_-$ and is affine on each horizontal segment
of $\tilde{S}$. By the above fact on the derivative of $\psi_1$ on $\tilde{\Sigma}_+$, 
the maximal dilatation of the extension tends to $1$ as $\eps\to 0$. 
          
Finally, let $\psi_2\colon \tilde{S}\to\tilde{S}$ be the real-affine
map that is the identity on $\tilde{\Sigma}_-$ and a translation 
on $\tilde{\Sigma}_+$ that maps the point of $h(V_+)$ with
smallest positive imaginary part to $\rho$. As $\eps\to 0$, the maximal
dilatation of this map tends to $1$. Hence, if $K_1>K_2$, then the composition
$\Psi\defeq \psi_2\circ \psi_1\circ h$ is $K_1$-quasiconformal 
for sufficiently small $\eps$. 
          
Since each of the maps 
$\psi_2$, $\psi_1$ and $h$ satisfies~\ref{item:commuteswithtranslation} 
and~\ref{item:identityonleft}, so does $\Psi$. Claim~\ref{item:real} holds
by choice of $\psi_2$. We have $\tilde{E}_+ \defeq \Psi(E_+) = \psi_2(\hat{E}_+)$. Since
$\hat{E}_+$ satisfies~\ref{item:lengthupperbound} and~\ref{item:boundedgeometry}, its translate
$\tilde{E}$ does also. 
Finally,~\ref{item:lengthrespectinginneredge} holds by definition of 
$\psi_1$. This concludes the proof.
\end{proof}

Finally, we require an elementary fact about extending partitions of the boundary of a rectangle to 
 a triangulation of its interior.

  \begin{defn}[Bounded-geometry partition of a rectangle boundary]\label{def:partition}
    Let $R$ be a Euclidean rectangle. By a  \emph{boundary partition} of $R$ we mean
     a finite set $P$ of points on $\partial R$ that includes the four vertices of $R$
     (i.e., a union of partitions of the four sides of $\partial R$). 
     The  \emph{edges} of the partition are the connected components of
     $\partial R\setminus P$; two edges are \emph{adjacent} if they have a common
     endpoint. 
     
    We say that the boundary partition $P$ has \emph{bounded geometry} 
     with constant $\lambda>1$ if 
       \begin{enumerate}[(a)]
         \item the lengths of 
          adjacent edges differ by at most a factor of $\lambda$, and 
          \item all edges have length at most $\lambda \ell$, where $\ell$ is the length of the two shorter sides of
             $R$. 
     \end{enumerate}
  \end{defn}
  
  \begin{prop}[Triangulations of a rectangle]\label{prop:rectangle}
	  Let $\lambda>1$. Then there is a constant $\theta_0>0$ with the following
      property. If $Q$ is a rectangle and $P$ is a bounded-geometry boundary partition
      with constant $\lambda$, then there is a triangulation $T$ 
      of the closed rectangle $Q$ into finitely many Euclidean triangles such that 
      \begin{enumerate}[(1)]
        \item all angles in all triangles in $T$ are bounded below by $\theta_0$;
        \item the vertices of $T$ on $\partial Q$ are precisely the elements of $P$. 
      \end{enumerate}  
  \end{prop}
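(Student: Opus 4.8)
The plan is to prove Proposition~\ref{prop:rectangle} by first reducing to the case of a square, and then giving an explicit construction that processes the boundary partition inward, layer by layer, always maintaining bounded geometry.

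First I would normalise: after an affine rescaling we may assume the two short sides of $Q$ have length $\ell = 1$, so $Q = [0,L]\times[0,1]$ with $L\geq 1$. Since the bounded-geometry condition caps every edge length at $\lambda$, the partition of each horizontal side consists of at least $L/\lambda$ edges; slicing $Q$ into roughly $L$ unit squares along vertical lines chosen to (approximately) respect the given partition of the long sides, and adding $O(1)$ points on the new internal vertical segments, reduces the problem to triangulating a \emph{bounded} number of near-unit rectangles, each carrying a bounded-geometry boundary partition with a (slightly enlarged) constant $\lambda'$ depending only on $\lambda$. Thus it suffices to triangulate a single unit square $Q=[0,1]^2$ with a bounded-geometry boundary partition $P$ with some constant $\lambda$, producing angles bounded below by a constant $\theta_0=\theta_0(\lambda)$.

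For the square, the idea is a \textbf{quad-tree / shrinking-layers construction}. The shortest edge of $P$ has some length $h$; the bounded-geometry hypothesis forces the edge lengths to vary only by factors of $\lambda$ between neighbours, so on each side the lengths grow or shrink geometrically, and near any point of $\partial Q$ the local edge scale is comparable to the distance to the ``finest'' part of that side. I would build a sequence of nested simple closed polygonal curves $\partial Q = C_0 \supset C_1 \supset C_2 \supset \dots$ inside $Q$, where $C_{k}$ is obtained from $C_{k-1}$ by moving inward a distance comparable to the current local edge scale and \emph{coarsening} the partition by a bounded factor (merging groups of a bounded number of adjacent edges wherever the scale permits, and pushing vertices straight in where it does not). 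Each dyadic annular region between $C_{k-1}$ and $C_k$ is a union of quadrilaterals of uniformly bounded aspect ratio, each of which is cut into a bounded number of triangles with angles bounded below; standard ``red/green'' refinement templates (as used in adaptive mesh generation) handle the finitely many combinatorial cases of how many children an edge spawns. After finitely many layers the innermost curve $C_N$ bounds a region whose boundary partition has a \emph{bounded} number of edges of comparable size, i.e.\ a quasi-regular polygon, which is triangulated by coning to its barycenter. Because every edge length at every stage changes only by universally bounded factors, and every triangle template used is drawn from a finite list, all angles are bounded below by a constant $\theta_0$ depending only on $\lambda$, and the construction terminates after $O(\log(1/h))$ steps, giving finitely many triangles; by construction the only vertices on $\partial Q$ are the points of $P$.

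The main obstacle is the bookkeeping for the ``layers'': one must choose, at each stage and at each boundary point, how far to step inward and which groups of adjacent edges to merge, in a way that (i) keeps the intermediate polygons simple (non-self-intersecting) and (ii) keeps every resulting quadrilateral of bounded aspect ratio even where the partition is highly non-uniform (a fine cluster of tiny edges adjacent to a region of large edges). The key quantitative input that makes this work is precisely the two-part bounded-geometry hypothesis: part~(a) controls the \emph{ratio} of neighbouring scales, so the ``staircase'' of edge sizes along a side has bounded slope, and part~(b) prevents any edge from being longer than the side itself, so the inward steps never overshoot. I expect that making the merging rule and the inward-step function explicit — so that the finitely many quadrilateral shapes that arise all lie in a compact family bounded away from degeneracy — is the technical heart of the argument; once that is set up, the lower angle bound and finiteness follow formally. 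An alternative, if one prefers to avoid the combinatorics, is to first produce \emph{any} triangulation of $Q$ with the prescribed boundary vertices (for instance by a constrained Delaunay triangulation, which exists since $P$ is a finite point set on $\partial Q$ in convex position) and then improve it by a bounded number of edge flips and Steiner-point insertions to achieve the angle bound, using that the boundary edge lengths are already comparable at neighbours; but the layered construction has the advantage of being self-contained and of making the dependence $\theta_0=\theta_0(\lambda)$ transparent.
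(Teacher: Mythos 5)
Your shrinking-layers construction is a genuinely different route from the paper's, though both exploit the same core insight that near each boundary point one must work at a scale comparable to the local partition-edge length. The paper avoids the iterative coarsening entirely: it fixes the standard Whitney decomposition of the (affinely normalised) rectangle's interior, which is already a nested family of dyadic ``rings'' of progressively smaller squares toward $\partial R$, and then performs a \emph{single} interpolation step rather than many. Concretely, each partition point $x_k$ is lifted to $z_k=x_k+i\,d_k$ at a height $d_k$ comparable to $D_k/\lambda$ (the local edge scale), giving one polygonal arc $\sigma$ per side with slopes $\leq 1/4$; the union of Whitney squares separated from $\partial R$ by the $\sigma$'s has an axis-parallel staircase boundary $\gamma$, and the thin region between $\gamma$ and $\partial R$ is triangulated directly by coning the bounded number of $\gamma$-vertices above each partition edge $I$ to one endpoint of $I$. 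The angle bound then falls out from the slope bound on $\sigma$ and the fact that $\gamma$ is locally horizontal above each $x_k$; no merging rule and no coarsening hierarchy are needed, and the interior Whitney squares are triangulated trivially. What your proposal leaves open is precisely what you flag as the ``technical heart'': a merging and inward-stepping rule guaranteeing that (i) the offset curves $C_k$ stay simple, in particular near the four corners of $Q$, and (ii) every quadrilateral produced lies in a compact family bounded away from degeneracy. The paper sidesteps (i) because the Whitney decomposition's behaviour at corners is automatic (the arcs $\sigma_j$ for adjacent sides meet inside a single Whitney square), and sidesteps (ii) because there is only one non-Whitney layer. Your constrained-Delaunay alternative has a further, concrete defect: Ruppert-type Delaunay refinement achieves its angle bound by splitting encroached boundary segments, which would insert vertices on $\partial Q$ and violate condition~(2); with boundary vertices frozen, it is not clear that interior Steiner insertions and flips alone yield an angle bound depending only on $\lambda$.
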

  Since we are not
    aware of a reference, we give a proof of Proposition~\ref{prop:rectangle} in an appendix
     (Section~\ref{sec:appendixtriangles}.) We remark that the result can also be obtained using
      the (much more general) methods used in~\cite{quadrilateralmeshes}.
      
With these preparations, we are now ready to prove Proposition~\ref{prop:hemmedtriangulation}.
  
  \begin{proof}[Proof of Proposition~\ref{prop:hemmedtriangulation}]
         Set $\Deltafn(R) \defeq 1/\log R$.  As mentioned in~\ref{item:reductiontogenus0}, we prove the proposition first when $U$ has genus $0$. In this case, it turns out that
	  the complex dilatation is supported only
           on the annuli $A^{\gamma}$, so we  can even take $\eta=0$. 
           
           For each $\gamma\in\Gamma$, 
             glue a copy $D^{\gamma}$ 
          of the closed disc $\overline{D(\infty,1)}=\Ch\setminus \D$ into the surface $U$
           at $\gamma$.  More precisely,
      we obtain a Riemann surface structure on $U\cup \bigcup_{\gamma} D^{\gamma}$
      by using the original charts of $U$, and adding the charts (with values in $\Ch$)
         \[ z\mapsto \begin{cases} z & \text{if } z\in D^{\gamma} \\
                      (\phi^{\gamma})^{-1}(z) & \text{if }z\in A^{\gamma} \end{cases} \]
       on $D^{\gamma}\cup A^{\gamma}$. (For simplicity of notation,
        we use $z$ to denote both the point of $D^{\gamma}$ and the one of
     $\overline{D(\infty,1)}$ that it represents.) 
             
     The result is a compact Riemann surface of genus $0$, and hence 
       conformally equivalent to the Riemann sphere $\Ch$. 
       In other words, we are now
        in the following situation: 
        \[U = \Ch\setminus \bigcup_{\gamma\in\Gamma} \Phi^{\gamma}(\overline{\D}) \]
         is an analytically bounded surface, 
        bounded by the curves
         $\gamma = \Phi^{\gamma}(S^1)$. Here each $\Phi^{\gamma}$ is a 
         conformal map defined on the disc $D(0,R^{\gamma})$, and the images of
         these functions have disjoint closures.  Note that
         $\Phi^{\gamma}(z)=\phi^{\gamma}(1/z)$ on $\AA_+(R^{\gamma})$.
         We may choose coordinates
         on the sphere such that $\Phi^{\gamma}(0)=\infty$ 
         for some $\gamma$, so that $\overline{U}\subset\C$. 
        Let \[ \tilde{\gamma} \defeq \Phi^{\gamma}( \partial(D(0, \sqrt{R^{\gamma}}))) \] denote the core curve of 
      the annulus $A^{\gamma}$, and let $\tilde{U}$ denote the subdomain of $U$ bounded by the curves $\tilde{\gamma}$. 
     (In Figure~\ref{fig:DefnAlpha}, $\tilde{U}$ is the union of the light grey annulus and the white region.)

\begin{figure}
  %\vspace{3ex plus .5ex minus .2ex}
  \begin{overpic}[percent, width=.6\textwidth]{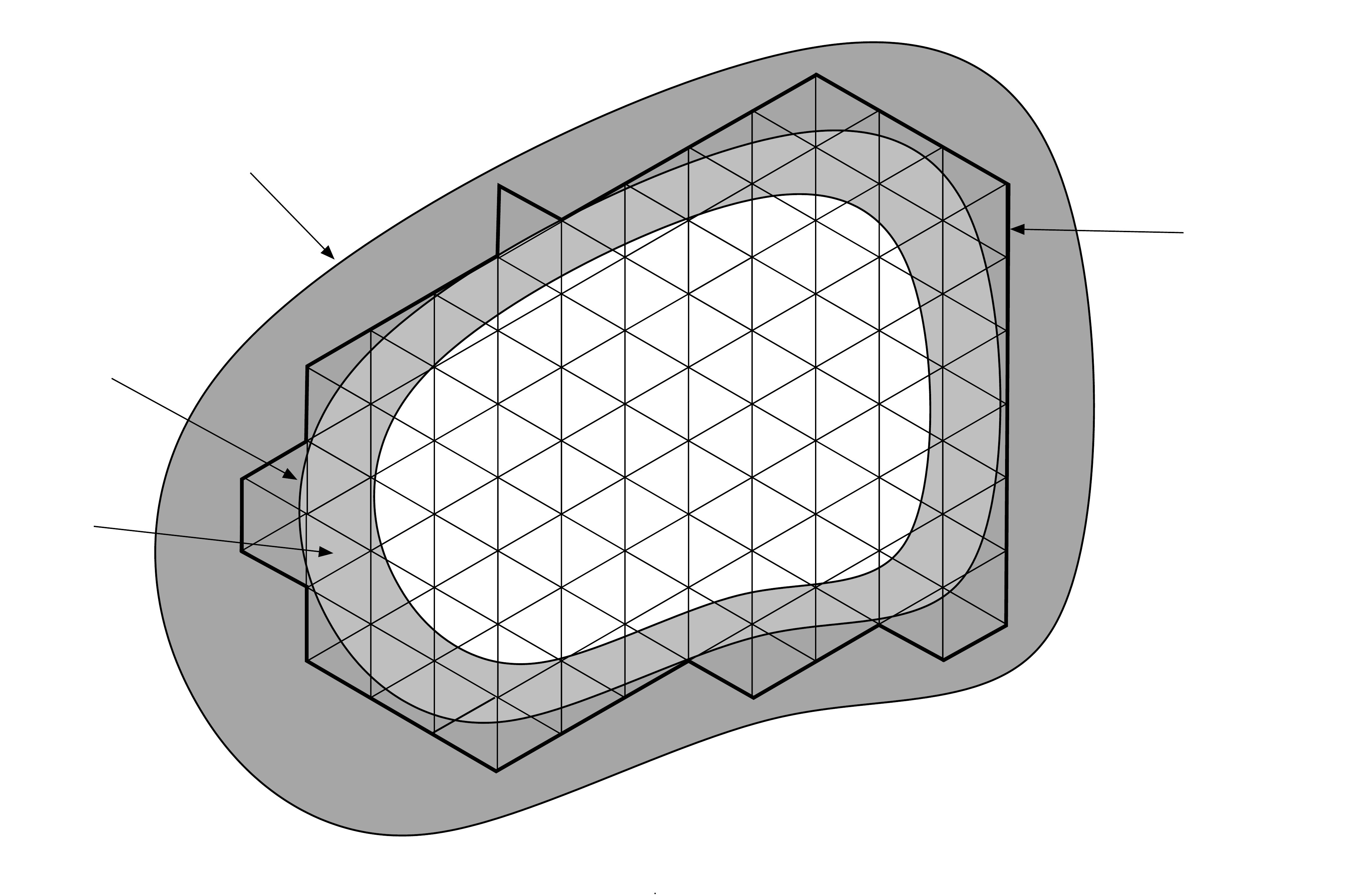}
  \put(89,47){$\alpha(\gamma)$}
  \put(15,53){$\gamma$}
  \put(5,37){$\widetilde\gamma$}
  \put(20,10){$U$}
  \put(2,25){$\tilde U$}
  \end{overpic}\\[2ex]
  \caption{\label{fig:DefnAlpha}
        Definition of $\alpha(\gamma)$. 
	For simplicity, the figure is drawn when $U$ is 
	simply connected, with a single boundary curve $\gamma$.
        }
\end{figure}

      Recall that $\mathcal{L}=\mathcal{L}_{\eps}$ is a tiling of the plane by equilateral triangles of
       side-length $\eps$. Let $\tilde{E}$ be the union of all triangles of $\mathcal{L}$ that intersect  $\tilde{U}$. 
       By Lemma~\ref{lem:boundaryapprox}, if $\eps>0$ is small enough, $\tilde{E}$ is a finite equilateral 
       Riemann surface with boundary, 
        with one boundary curve $\alpha(\gamma)$ contained in each $A^{\gamma}$ and homotopic to $\gamma$. See Figure~\ref{fig:DefnAlpha}.

    For each boundary curve $\gamma\in\Gamma$, we are in the situation described by Lemma~\ref{lem:qcstripmap}
     (where $A=A^{\gamma}$, $R=R^{\gamma}$, $\Phi=\Phi^{\gamma}|_{\AA_+(R^{\gamma})}$ and so 
     on).
     We assume that
     $\eps$ is chosen sufficiently small such that the conclusion of the lemma holds for each $\gamma$.

 We now explain how to extend the triangulation of $\tilde{E}$ to a triangulation 
  of $U^{\gamma}\setminus \tilde{E}$ for each $\gamma\in\Gamma$, using Lemma~\ref{lem:qcstripmap} and Proposition~\ref{prop:rectangle}. Fix $\gamma\in\Gamma$ and let the
  strips $S$ and $\tilde{S}$, the boundary $\Sigma_+$ and its vertex set $V_+$, and the map
  $\Psi=\Psi^{\gamma}\colon S\to\tilde{S}$ be as in Lemma~\ref{lem:qcstripmap}. Recall also that $\rho = (\log R^{\gamma})/2$.

Consider the rectangle 
\[ Q\defeq Q^{\gamma} \defeq \{a + ib \colon 0 \leq a \leq \rho \text{ and } 0\leq b \leq 2\pi\} \subset \tilde{S}. \]
        The set $P_+ \defeq \Psi(V_+)\cap Q$ is a bounded-geometry partition of the left vertical side of $Q$
        (with constant $\lambda_1>1$, which is the universal constant from Lemma~\ref{lem:qcstripmap}). Set 
            \[ P_- \defeq \{ 2\pi i j/d^{\gamma}\colon j=0,\dots,d^{\gamma} \}; \]
         this provides a partition of the right side of $Q$. 
        By Lemma~\ref{lem:qcstripmap}~\ref{item:lengthupperbound}, 
        and since $d^{\gamma} \geq \Deltafn(R^{\gamma})\geq 1/\log R^{\gamma}$, all of the edges of the two partitions have length at most
          $2\pi \log R^{\gamma}$. It follows easily that we can extend $P_+\cup P_-$ to a bounded-geometry partition of $\partial Q$
          in the sense of Definition~\ref{def:partition}, with universal constant $\lambda > \lambda_1$, 
          and where furthermore 
          the partition of the upper and lower boundary agree up to translation by $2\pi i$. 
          Now apply Proposition~\ref{prop:rectangle} to obtain 
          a triangulation $\mathcal{Q}^{\gamma}$ of $Q = Q^{\gamma}$ by Euclidean triangles, where the angles of all triangles are bounded below by $\theta_0$. Observe that,
          in particular, no vertex is incident to more than $s_1\defeq \lfloor 2\pi/\theta_0\rfloor$ edges. 

        Map $\mathcal{Q}^{\gamma}$ to an equilateral surface $E^{\gamma}$ by a homeomorphism $g^{\gamma}$ that is real-affine on each triangle. Then 
          $g^{\gamma}$ is $K_3$-quasiconformal, where $K_3$ depends only on $\theta_0$, and hence is a universal constant. 
          We form an equilateral surface $E$ as the union of $\tilde{E}$ and all $E^{\gamma}$, by identifying each 
          boundary edge $e$ of $\tilde{E}$ on $\alpha(\gamma)$ with the corresponding edge $g^{\gamma}(\Psi(\Log((\Phi^{\gamma})^{-1}(e))))$ of $E^{\gamma}$. 
          (Here $\Log$ is the branch of the logarithm taking imaginary parts between $0$ and $2\pi$.) 

          By the length-respecting property of $\Psi$, the function
           \[ g\colon \overline{U}\to E; z\mapsto \begin{cases} z & \text{if }z\in \tilde{E} \\
                                                           g^{\gamma}(\Psi^{\gamma}(\Log( (\Phi^{\gamma})^{-1}(z)))) & \text{if }z\in  \overline{U^{\gamma}}\setminus \tilde{E} \end{cases} \]
           is continuous, and hence a $K_0\defeq K_1\cdot K_3$-quasiconformal homeomorphism which is conformal on $\tilde{E}$. Every vertex of $E$ is incident to at most
             $s_2\defeq \max(6, s_1 + 4)$ edges. Finally, for any edge of $\Xi_{d^{\gamma}}$, we have
                \[ g \circ \phi^{\gamma} = g^{\gamma}\circ \Log \]
             on $E$. $\Log$ takes $e$ to one of the complementary intervals of $A_-$ in length-respecting fashion; this interval in turn is one of the edges of the
             triangulation $\mathcal{Q}^{\gamma}$. The restriction of $g^{\gamma}$ to this edge is a real-affine map, and hence length-respecting.
               This establishes~\ref{item:length-respecting} and completes the proof when $U$ has genus $0$.
      
If $U$ has positive genus $g>0$, then by definition $g$ is the largest
number such that there
are $g$ pairwise disjoint closed curves $\beta_1,\dots,\beta_g\subset U$ such that
\[ \tilde{U} \defeq U\setminus \bigcup_{i=1}^g \beta_i \] 
is connected. We may choose the $\beta_i$ to be 
analytic. Let $\psi_i\colon S^1\to \beta_i$ be analytic parameterisations,
which extend to analytic biholomorphic maps 
\[ \psi_i \colon \AA(R_i) \to A_i \subset U \]
for some $R_i>1$. We choose the $R_i$ 
sufficiently small to ensure that the closures of the annuli $A_i$ are 
pairwise disjoint, and also disjoint from the closures of the 
 $A^{\gamma}$, and that additionally their combined $\rho$-area is at most
$\eta$. 
          
Clearly $\tilde{U}$ has genus $0$. 
We can think of $\tilde{U}$ as a hemmed Riemann surface, whose boundary
curves are those inherited from $U$, together with 
two copies $\beta_i^+$ and $\beta_i^-$ of each $\beta_i$. The boundary
parameterisations are given by
\[ \phi^{\beta_i^-}\colon \AA_-(R_i) \to \tilde{U}; \quad
  z\mapsto \psi_i(z) \qquad\text{and}\qquad 
 \phi^{\beta_i^+}\colon \AA_-(R_i)\to\tilde{U};
  z\mapsto \psi_i(1/z). \]

 Now apply Proposition~\ref{prop:hemmedtriangulation} to the genus $0$ surface $\tilde{U}$, 
         where we take 
         $d^{\beta_i^+} = d^{\beta_i^-} \geq \Deltafn(R_i)$
         for each $i$. We obtain an equilateral surface-with-boundary $\tilde{E}$ and a quasiconformal map
         $\tilde{g}\colon \tilde{U}\to\tilde{E}$. For each edge $e$ of the partition of $\beta_i$ given by
         $\psi_i(\Xi_{d^{\beta_i}})$, there are two corresponding intervals $e_+$ and $e_-$ on 
         $\beta_i^+$ and $\beta_i^-$. Identifying the edges $g(e_+)$ and $g(e_-)$ on $\tilde{E}$, for every edge $e$,
          we obtain a new equilateral surface-with-boundary $E$. Every vertex of $E$ is incident to at most
          $s_0\defeq 2s_2-2$ edges.
          Conditions~\ref{item:length-respecting} and~\ref{item:gqc} for $\tilde{g}$ ensure
           that $\tilde{g}$ induces a homeomorphism 
         $g\colon U\to E$ that also satisfies this conditions. The complex dilatation of $g$ 
         is supported on the union of 
         $\bigcup_{\gamma} A^{\gamma}$ and $\bigcup_{i=1}^g A^{\beta_i^+}\cup A^{\beta_i^-}$. The latter 
         set has area at most $\eta$, as required.
 \end{proof}

 \section{Corrections on Riemann surfaces}\label{sec:teichmueller}
 
  As previously mentioned, our goal is to 
      build the desired triangulations of the non-compact surface $X$
     piece by piece on finite pieces (in the sense of Definition~\ref{defn:finitepiece}) 
     of $X$, 
     applying the construction of the preceding
      section. Recall that we may straighten these triangulations by a quasiconformal
      map to obtain an equilateral triangulation. This straightening 
      changes the surface on which the triangulation is defined, but  
      by Proposition~\ref{prop:hemmedtriangulation} the  maximal dilatation of
      the quasiconformal maps
      in question is bounded and supported
      on sets of small area. Our goal is now to justify that 
      the change to the complex structure 
      is so small that the resulting perturbed piece can be re-embedded
      into our original 
       surface $X$. 
       
\begin{prop}[Realising quasiconformal changes] \label{prop:straightening}
Let $X$ be a Riemann surface, equipped with a conformal metric $\rho$,
and let $S\subsetneq X$ be 
an analytically bounded finite piece of $X$. 
Let $K\geq 1$, and let $\delta>0$. Then there is a constant
$\eta>0$ with the following property.
Let $\mu$ be a Beltrami form on $S$ whose support $\supp(\mu)$ has area
at most $\eta$ (with respect to the metric
on $X$) and whose maximal 
dilatation is bounded by $K$. Then there is a quasiconformal homeomorphism
 \[ \psi\colon S\to \psi(S)\subset X\]
whose complex dilatation is $\mu$, which is isotopic to the identity and which satisfies
\[ \dist(z,\psi(z))<\delta \]
for all $z\in S$.
\end{prop}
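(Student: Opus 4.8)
The plan is to work in the Teichmüller theory of the finite surface $S$, using the fact that a small Beltrami form produces a conformal structure close to the original one in the Teichmüller metric, and that quasiconformal maps depending continuously on the dilatation can be chosen to move points a bounded (hence small) amount. First I would observe that it suffices to prove the statement under the additional hypothesis that $\mu$ is supported in a fixed compact subset $S' \Subset S$ whose complement is a union of collar annuli around the boundary curves: indeed, since $S$ is analytically bounded and precompact in $X$, we may first shrink slightly to an analytically bounded $S'' \Subset S$ with $\supp(\mu) \subset S''$ once $\eta$ is small enough that the $\rho$-area condition forces $\mu$ to avoid a fixed collar — but more honestly, this reduction is not automatic, so instead I would argue directly on $S$ and only use that $\supp(\mu)$ has small area, controlling the quasiconformal map near $\partial S$ by the quasiconformal distortion estimate for maps whose dilatation is supported on a small-area set.

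The key analytic input is the following: if $\phi^{\mu}$ denotes the normalised solution of the Beltrami equation with dilatation $\mu$ on $S$ (more precisely, extend $\mu$ by $0$ to a larger surface or to $\C$ via charts and solve globally, then restrict), then as the area of $\supp(\mu)$ tends to $0$ with dilatation bounded by $K<1$, the map $\phi^{\mu}$ converges to the identity uniformly on compact subsets, and in fact in a strong enough sense (e.g.\ uniformly on $\overline{S}$ after suitable normalisation) that the induced new complex structure on $S$ lies within Teichmüller distance $\epsilon(\eta) \to 0$ of the original one. This is a standard consequence of the explicit dependence of solutions of the Beltrami equation on the dilatation (the Ahlfors--Bers theorem together with area-distortion estimates for quasiconformal maps, e.g.\ of Astala type, or simply the $L^p$-continuity of the Beurling transform). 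Concretely, I would fix a conformal embedding realising $\overline{S}$ as an analytically bounded subsurface, transport $\mu$ to a Beltrami coefficient on that model, solve, and use Koebe-type distortion together with the small support to conclude $|\phi^{\mu}(z) - z| < \delta/2$ in the ambient metric, provided $\eta$ is small. Since $\phi^\mu$ is a quasiconformal homeomorphism of $S$ onto a nearby subsurface of $X$ with complex dilatation exactly $\mu$, and since it is close to the identity, it is isotopic to the identity (its image is a slight perturbation of $S$, and the straight-line homotopy in the ambient charts stays inside $X$ once the displacement is below the distance from $\overline{S}$ to $\partial X$).

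The main obstacle I expect is the passage from ``small area of support'' to ``small Teichmüller distance / small displacement'' while keeping the bound on the dilatation only as $K<1$ rather than $K\to 0$: a quasiconformal map with dilatation bounded away from $0$ on a set of small area need not be close to the identity in general — one must use that the \emph{area} of the support, not merely its diameter, is small, and invoke the quantitative area-distortion theorem (e.g.\ that a $K$-quasiconformal map whose dilatation is supported on a set of area $\eta$ distorts the modulus of any annulus, and hence Euclidean distances on compacta, by a factor tending to $1$ as $\eta\to 0$, uniformly in $K$ on $[0,k]$). Granting this, the displacement estimate follows by covering $\overline{S}$ with finitely many coordinate disks and applying the estimate in each. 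Finally, I would assemble the pieces: choose $\eta$ so small that (i) $\supp(\mu)$ is disjoint from a fixed collar of $\partial S$ — not needed if we argue as above — (ii) the area-distortion bound forces the normalised solution to displace every point of $\overline{S}$ by less than $\delta$, and (iii) this displacement is less than $\mathrm{dist}_\rho(\overline{S}, \partial X)$, so that $\psi \defeq \phi^{\mu}$ maps $S$ into $X$; isotopy to the identity is then immediate from the smallness of the displacement. The rest is bookkeeping with charts and the definition of the Teichmüller metric, which I would not spell out in detail.
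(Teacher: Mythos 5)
Your proposal has a genuine gap that goes to the heart of the proposition: you never explain how to land the deformed structure back \emph{inside} $X$. You write ``extend $\mu$ by $0$ to a larger surface or to $\C$ via charts and solve globally, then restrict,'' but a general Riemann surface $X$ cannot be globally embedded in $\C$, and if $X$ is compact there is no larger surface. The only global option is to lift to the universal cover $\DD$ and solve there. The normalized solution $\phi^{\tilde{\mu}}\colon \DD\to\DD$ then conjugates the Fuchsian group $G$ to a \emph{different} group $G_\mu$, so it descends to a quasiconformal map $X\to X_\mu$ onto a genuinely different Riemann surface, not a self-map of $X$. Projecting $\phi^{\tilde{\mu}}$ back down by $\pi_X$ does not produce a well-defined map on $S$ either: a connected component of $\pi_X^{-1}(S)$ is the universal cover of $S$, which is a nontrivial covering unless $S$ is simply connected, and $\phi^{\tilde\mu}$ does not respect the $\pi_1(S)$-action (it conjugates it into $G_\mu$). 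So the ``restrict'' step in your proposal is not available, and closeness to the identity of $\phi^{\tilde\mu}$ is beside the point until you have a map into $X$ to measure.

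There is a second problem: you propose to quantify smallness using the Teichm\"uller theory of the finite piece $S$ itself, arguing that small support forces small Teichm\"uller distance. But $S$ is a bordered surface, so $\Teich(S)$ is infinite-dimensional, and in that setting the implication ``representatives converge to $0$ in measure $\Rightarrow$ $[\mu_n]_T\to[0]_T$'' is \emph{false}; the paper points this out explicitly (Remark~1 after Lemma~\ref{lem:teichmueller}, citing Gardiner). The paper avoids both difficulties simultaneously by first compactifying (Lemma~\ref{lem:reduction}) so that $\Teich(X)$ is finite-dimensional, and then~-- and this is the idea missing from your proposal~-- \emph{compensating} the deformation $\mu$ by a Beltrami form $\nu$ supported on a small disc $D\subset X\setminus\overline{S}$ lying in the same Teichm\"uller class. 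Finite-dimensionality (via Lemma~\ref{lem:tangent}, Hahn--Banach, and the inverse function theorem) guarantees such a $\nu$ exists; the resulting self-map $\psi\colon X\to X$ satisfying $\psi^*(\nu)=\mu$ is isotopic to the identity and has dilatation exactly $\mu$ on $S$ once $\eta$ is small enough to force $\psi^{-1}(D)\cap S=\emptyset$. Your area-distortion and compactness observations are correct and do appear in the paper (Lemmas~\ref{lem:compactness} and~\ref{lem:areadistortion}), but they are used there to prove that the compensated map $\psi$ converges to the identity, not as a substitute for the compensation construction, which is the essential content of the proof.
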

 
  \begin{lem}\label{lem:reduction}
   To establish Proposition~\ref{prop:straightening}, it is
    sufficient to prove it in the special case where 
     $X$ is compact and hyperbolic, and $\rho$ is the 
     hyperbolic metric on $X$.
  \end{lem}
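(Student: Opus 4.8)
The plan is to exploit that $\overline S$ is compact in $X$: Proposition~\ref{prop:straightening} is effectively a local statement about a compact neighbourhood of $\overline S$, so the ambient surface $X$ may be replaced by any other surface that contains a conformal copy of such a neighbourhood. I would choose this replacement to be a compact hyperbolic surface and then transfer the hypotheses and the conclusion across the identification, using that any two conformal metrics on a compact set are comparable (as already noted in Section~\ref{sec:belyibackground}).

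First I would enlarge $S$. Since $S$ is precompact in $X$ and $\partial S$ consists of finitely many analytic Jordan curves, I can push each boundary curve a small distance outward along a bicollar into $X\setminus\overline S$, obtaining an analytically bounded finite piece $S_1$ with $\overline S\subset S_1$ and with $\overline{S_1}$ a compact bordered Riemann surface contained in $X$. Capping each boundary curve of $\overline{S_1}$ with a conformal disc produces a closed Riemann surface into which $S_1$ embeds conformally, and attaching finitely many handles in the complement of $\overline{S_1}$ if necessary yields a compact Riemann surface $Y$ of genus at least $2$ together with a conformal embedding $\iota\colon S_1\hookrightarrow Y$. By uniformisation, $Y$ carries a hyperbolic metric $\rho_Y$. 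Identifying $S_1$ with its image in $Y$, both $\rho$ and $\rho_Y$ are continuous and positive on the compact set $\overline{S_1}$, so there is a constant $C\ge 1$ with $C^{-1}\rho_Y\le\rho\le C\rho_Y$ on $\overline{S_1}$. Hence, on subsets of $\overline{S_1}$, the $\rho_Y$-area is at most $C^2$ times the $\rho$-area, and the $\rho$- and $\rho_Y$-lengths of curves contained in $\overline{S_1}$ differ by at most the factor $C$.

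Now suppose Proposition~\ref{prop:straightening} holds in the special case. Given $0<K<1$ and $\delta>0$, let $r_0>0$ be smaller than the $\rho_Y$-distance from $\overline S$ to $Y\setminus S_1$ and than the injectivity radius of $(Y,\rho_Y)$ at every point of $\overline S$, and set $\delta':=\min(\delta/C,\;r_0/2)$. Applying the special case to $Y$, $\rho_Y$, $S$, $K$ and $\delta'$ gives a constant $\eta'>0$; put $\eta:=\eta'/C^2$. If $\mu$ is a Beltrami form on $S$ with dilatation bounded by $K$ and with $\supp(\mu)$ of $\rho$-area at most $\eta$, then $\supp(\mu)$ has $\rho_Y$-area at most $\eta'$, so we obtain a quasiconformal map $\psi_Y\colon S\to\psi_Y(S)\subset Y$ with complex dilatation $\mu$ and $\dist_{\rho_Y}(z,\psi_Y(z))<\delta'$ for all $z\in S$. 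Since $\delta'<r_0$, for each $z\in S$ the $\rho_Y$-geodesic from $z$ to $\psi_Y(z)$ is unique and stays inside $S_1$; in particular $\psi_Y(S)\subset S_1$, so through $\iota$ we may regard $\psi_Y$ as a quasiconformal homeomorphism $\psi\colon S\to\psi(S)\subset X$, still of complex dilatation $\mu$, and the length comparison gives $\dist_\rho(z,\psi(z))\le C\,\dist_{\rho_Y}(z,\psi_Y(z))<C\delta'\le\delta$. Finally, sliding each point $z$ along the unique ($S_1$-contained) $\rho_Y$-geodesic joining it to $\psi(z)$ yields an isotopy inside $X$ from $\id_S$ to $\psi$, each time-$t$ map being a homeomorphism onto its image for $\delta'$ small, as a $C^0$-small perturbation of the inclusion $S\hookrightarrow X$. (Notably, the conclusion that $\psi_Y$ is isotopic to the identity in $Y$ is not even needed.) Thus $\psi$ satisfies all requirements of Proposition~\ref{prop:straightening} over $X$, which establishes the reduction.

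The main work is the soft topology of the first steps: producing $S_1$, and checking that capping its analytic boundary curves with conformal discs and adding handles really yields a compact hyperbolic surface into which $S_1$ embeds conformally; after that, the only thing to watch is choosing $\delta'$ small enough that the image $\psi(S)$, the distance bound and the connecting isotopy all stay inside the embedded copy of $S_1$. No analytic difficulty arises — the content is simply that, because $S$ is compactly contained in $X$, only a compact neighbourhood of $\overline S$, and hence the auxiliary compact hyperbolic surface $Y$, is relevant to the statement.
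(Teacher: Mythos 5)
Your reduction coincides in substance with the paper's: enlarge $S$ to an analytically bounded finite piece $S_1$ with $\overline{S}\subset S_1$, glue compact Riemann surfaces onto the boundary curves to produce a compact hyperbolic surface $Y$ into which $S_1$ (hence $S$) embeds conformally, and transfer the constants between $(X,\rho)$ and $(Y,\rho_Y)$ using comparability of the two conformal metrics on the compact set $\overline{S_1}$. The bookkeeping with $C$, $r_0$, $\delta'$ and $\eta$ plays exactly the role of $\tilde\delta$ and $\tilde\eta$ in the paper, and the area and distance transfers are fine.

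There is, however, a genuine gap in your isotopy argument. You parenthetically claim that the conclusion ``isotopic to the identity in $Y$'' from the special case is not needed, and instead assert that sliding each $z$ along the $\rho_Y$-geodesic from $z$ to $\psi(z)$ yields an isotopy because each time-$t$ map is ``a homeomorphism onto its image for $\delta'$ small, as a $C^0$-small perturbation of the inclusion''. That step is false as stated: being $C^0$-close to an embedding does not force injectivity. Already on $[0,1]$, the map $x\mapsto x+\eps\sin(x/\eps^2)$ is within $\eps$ of the identity in the sup norm but has a non-monotone graph (its derivative $1+\eps^{-1}\cos(x/\eps^2)$ is frequently negative), hence is not injective for small $\eps$; the same pathology can occur for the intermediate maps $\Psi_t$, since linear or geodesic interpolation between two embeddings need not pass through embeddings. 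The cleanest repair is precisely to use the isotopy supplied by the compact case that you discard: in the compact-case proof $\psi_Y$ is obtained as a small perturbation of $\id_Y$ together with an ambient isotopy of $Y$, and for $\eta$ small this isotopy can be taken $C^0$-small, hence to stay inside $S_1\subset X$. The paper leaves this transfer implicit; your version tries to sidestep it, but the sidestep as written does not work.
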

  \begin{proof}
    If $X$ is not compact, 
     let $\tilde{S}$ be a larger finite  piece of $X$, 
     extending $S$ by a small annulus at each boundary curve; so 
     $\overline{S}\subset \tilde{S}\subset X$.
     Now form a new, compact, Riemann surface $\tilde{X}$ by glueing, 
     into each boundary curve of $\tilde{S}$, a compact Riemann surface with a disc removed. 
     By choosing at least one of these surfaces to have genus at least $2$, we ensure that $\tilde{X}$ is hyperbolic. 

   Let $\tilde{\rho}$ be the hyperbolic
     metric on $\tilde{X}$. Since $\rho$ and $\tilde{\rho}$ are comparable on the 
     closure of $\tilde{S}$, there is $\tilde{\delta}>0$ with the following property.
     If $z\in S$ and $w\in \tilde{X}$ are such that 
     $\dist_{\tilde{\rho}}(z,w)<\tilde{\delta}$, then
     $w\in \tilde{S}$ and
     $\dist_{\rho}(z,w)<\delta$.
     
     Suppose that Proposition~\ref{prop:straightening} has been
        proved for the compact surface $\tilde{X}$; we apply it with 
        $S$, $K$ and
        $\tilde{\delta}$ to obtain a number $\tilde{\eta}>0$. Let $\eta>0$
        be so small that any subset of $S$ 
        of $\rho$-area at most $\eta$
        has $\tilde{\rho}$-area at most $\tilde{\eta}$. (Again, this is possible by
        comparability of the
        Riemannian metrics.) Then $\eta$ satisfies
        the conclusion of Proposition~\ref{prop:straightening} for $X$, $S$, $K$ and 
        $\delta$.
   \end{proof}
        
   So it remains to establish Proposition~\ref{prop:straightening} for $X$ compact and hyperbolic\footnote{%
        The requirement that $X$ be hyperbolic is made purely for convenience. Everything that
          follows is true in a suitable sense also for tori and the Riemann sphere, but assuming hyperbolicity
          means that we can avoid normalisation assumptions in the statements and considerations of special cases in the proofs.}.
     To do so, we 
    require some well-known results from the theory of Riemann surfaces, 
    quasiconformal
     mappings and
     Teichm\"uller spaces. Let us begin with two simple facts related to the compactness
     of quasiconformal mappings. 
     
 \begin{lem}[Compactness of quasiconformal mappings]\label{lem:compactness}
   Let $X$ be a compact hyperbolic Riemann surface, let $K\geq 1$, and let $\psi_n\colon X\to X$ be
     a sequence of $K$-quasiconformal self-maps of $X$.  
   Then there is a subsequence $(\psi_{n_k})_{k=0}^{\infty}$ that converges
     uniformly to a quasiconformal map $\psi\colon X\to X$. Moreover, if 
	 the complex  dilatations $\mu_{n_k}$ of $\psi_{n_k}$ 
     converge in measure to some Beltrami differential
	 $\mu$, then $\mu$ is the complex dilatation of $\psi$. 
  \end{lem}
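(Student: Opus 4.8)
The plan is to run the standard normal-family argument: deduce equicontinuity of the sequence $(\psi_n)$ from the uniform local modulus of continuity of $K$-quasiconformal maps, extract a uniformly convergent subsequence by the Arzel\`a--Ascoli theorem, check that the limit is a homeomorphism by passing to the limit also in the inverse maps, and identify the dilatation of the limit by means of the $W^{1,2}$-characterisation of quasiconformality.

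First I would fix a finite atlas of $X$ by coordinate discs and write $d$ for the hyperbolic distance on $X$. A $K$-quasiconformal homeomorphism between planar domains obeys a H\"older estimate with exponent $1/K$ and a constant depending only on $K$ and the geometry of the domains (see e.g.\ \cite{lehtovirtanen}). Reading this off in the finitely many charts, and using compactness of $X$ to see that the image $\psi_n(B)$ of a fixed small ball $B$ cannot be too large relative to $X$ (so that the constants do not degenerate), one obtains a modulus of continuity $\omega=\omega_{K,X}$, independent of $n$, with $d(\psi_n(z),\psi_n(w))\le\omega(d(z,w))$ for all $z,w\in X$; since the inverses $\psi_n^{-1}$ are again $K$-quasiconformal, the same estimate applies to them. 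By Arzel\`a--Ascoli I pass to a subsequence (not relabelled) along which $\psi_n\to\psi$ and $\psi_n^{-1}\to\phi$ uniformly on $X$, with $\psi,\phi$ continuous. Passing to the limit in $\psi_n\circ\psi_n^{-1}=\id=\psi_n^{-1}\circ\psi_n$, using uniform convergence together with continuity, gives $\psi\circ\phi=\phi\circ\psi=\id$; so $\psi$ is a homeomorphism of $X$, and it is $K$-quasiconformal by the standard fact that a locally uniform limit of $K$-quasiconformal homeomorphisms is again $K$-quasiconformal.

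For the final assertion, suppose in addition that $\mu_n\to\mu$ in measure; write $k\defeq(K-1)/(K+1)$. The pointwise bound $|D\psi_n|^2\le 2K\,J_{\psi_n}$ for $K$-quasiconformal maps (differential and Jacobian measured in a fixed smooth metric on $X$), together with $\int_X J_{\psi_n}=\area(X)$ from the change-of-variables formula, bounds the Dirichlet energy of $\psi_n$ uniformly, so $(\psi_n)$ is bounded in $W^{1,2}_{\mathrm{loc}}$ in every chart. As $\psi_n\to\psi$ uniformly, hence strongly in $L^2_{\mathrm{loc}}$, every subsequence of $(\psi_n)$ has a further subsequence converging weakly in $W^{1,2}_{\mathrm{loc}}$, necessarily to $\psi$; thus $\psi\in W^{1,2}_{\mathrm{loc}}$ and $\partial\psi_n\rightharpoonup\partial\psi$, $\overline\partial\psi_n\rightharpoonup\overline\partial\psi$ weakly in $L^2_{\mathrm{loc}}$. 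Since $\|\mu_n\|_\infty\le k<1$ and $X$ has finite area, convergence in measure upgrades to $\mu_n\to\mu$ in $L^2$. Working in a small ball whose image lies in a single target chart, so that $\overline\partial\psi_n=\mu_n\,\partial\psi_n$ a.e.\ makes literal sense, for a compactly supported $L^\infty$ test function $\chi$ we get
\[
  \int\chi\,\overline\partial\psi_n \;=\; \int\chi\,(\mu_n-\mu)\,\partial\psi_n \;+\; \int\chi\,\mu\,\partial\psi_n \;\longrightarrow\; \int\chi\,\mu\,\partial\psi,
\]
because $\|\mu_n-\mu\|_{L^2}\to0$ while $\|\partial\psi_n\|_{L^2}$ stays bounded, and $\chi\mu\in L^2$ tests the weak convergence $\partial\psi_n\rightharpoonup\partial\psi$. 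On the other hand $\int\chi\,\overline\partial\psi_n\to\int\chi\,\overline\partial\psi$, whence $\overline\partial\psi=\mu\,\partial\psi$ a.e. A quasiconformal homeomorphism has positive Jacobian a.e., so $\partial\psi\neq0$ a.e., and therefore $\mu_\psi=\mu$ a.e., as claimed.

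I expect the one delicate point to be the equicontinuity step: the H\"older constant obtained for $\psi_n$ in a chart must not blow up as the image $\psi_n(B)$ is pushed around $X$, and this is exactly what compactness of $X$ prevents. Alternatively one could lift the $\psi_n$ to $K$-quasiconformal self-maps of $\mathbb D$ and appeal to the classical planar normal-family theory, at the cost of tracking the (generally non-fixed) deck transformations, since the $\psi_n$ need not be homotopic to the identity. Everything else --- Arzel\`a--Ascoli, the uniform $W^{1,2}$ bound, uniqueness of weak limits, and a.e.\ positivity of the Jacobian --- is routine.
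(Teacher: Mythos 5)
Your proof of the first assertion follows essentially the same route as the paper: equicontinuity of $K$-quasiconformal self-maps of a compact surface plus Arzel\`a--Ascoli (the paper simply cites \cite[Theorem~4.4.1]{hubbardteichmuller} for the equicontinuity, while you sketch the H\"older estimate in charts; both proofs then invoke the identical inverse-map trick to get a homeomorphic limit). For the second assertion, however, your argument differs from the paper's. The paper lifts $\psi_n$ to the universal cover $\pi\colon\DD\to X$, passes to a further subsequence along which the lifted dilatations converge almost everywhere, and then cites Lehto--Virtanen's convergence theorem \cite[Theorem~I.4.6]{lehtounivalent} for normal solutions of the Beltrami equation on $\DD$. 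You instead work directly on $X$: a uniform Dirichlet-energy bound coming from $|D\psi_n|^2\lesssim K J_{\psi_n}$ and $\int_X J_{\psi_n}=\area(X)$ gives a $W^{1,2}_{\mathrm{loc}}$ bound; you then pass to weak limits of $\partial\psi_n,\bar\partial\psi_n$, upgrade the convergence in measure of $\mu_n$ to $L^2$ convergence (finite total area and $\|\mu_n\|_\infty\le k<1$), and split $\int\chi\,\mu_n\,\partial\psi_n$ to conclude $\bar\partial\psi=\mu\,\partial\psi$ a.e.; positivity a.e.\ of the Jacobian finishes. This is a correct, self-contained alternative that avoids both the lift and the black-box citation, at the cost of a somewhat longer Sobolev-space argument; the paper's route is shorter because it defers the analytic core to a standard reference, but requires one to manage the non-uniqueness of the lifts (implicitly handled by the normalisation built into Lehto's theorem). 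One small point worth noting in your version: the identity $\bar\partial\psi_n=\mu_n\,\partial\psi_n$ written in a fixed pair of source/target charts needs $\psi_n(B)$ to land in the target chart for all large $n$; you observe this follows from uniform convergence, which is exactly right.
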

  \begin{proof}
    According to~\cite[Theorem~4.4.1]{hubbardteichmuller}, the family of $K$-quasiconformal self-maps 
     of $X$ is equi\-continuous. Since $X$ is compact and the inverse of a $K$-quasiconformal map is $K$-quasiconformal,
     the family is indeed compact, proving the first claim. 

    The second claim follows from \cite[Theorem~I.4.6]{lehtounivalent} by lifting the maps to the disc via the universal
      covering map $\pi\colon\DD\to X$. (Recall that, if $\mu_{n_k}\to \mu$ in measure, then there is a subsequence along which it converges almost everywhere.)
  \end{proof}
  
\begin{lem}[Area distortion]\label{lem:areadistortion}
 Let $X$ be a compact hyperbolic Riemann surface, with its hyperbolic
    metric $\rho_X$, and let $K\geq 1$ and 
    $\theta>0$. 
    Then there is $\eta>0$ with the following property: 
      If $E\subset X$ is compact with
      $\area_X(E)\leq \eta$, then $\area_X(\psi(E))\leq \theta$ for all $K$-quasiconformal maps $\psi\colon X\to X$. 
 \end{lem}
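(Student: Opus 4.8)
The plan is to reduce everything to a single quantitative fact: the equi-integrability of the Jacobians of all $K$-quasiconformal self-maps of $X$. Write $A\defeq\area_X(X)$, and for a $K$-quasiconformal homeomorphism $\psi\colon X\to X$ let $J_\psi\geq 0$ denote its Jacobian with respect to the area element of $\rho_X$; recall that $\psi$ is differentiable almost everywhere, satisfies Lusin's condition (N), and obeys the area formula $\area_X(\psi(E))=\int_E J_\psi\,d\area_X$ for every measurable $E\subseteq X$. In particular $\int_X J_\psi\,d\area_X=\area_X(\psi(X))=A$, and since in any local conformal coordinate the operator norm of the derivative satisfies $\|D\psi\|^{2}\leq K\,J_\psi$ pointwise almost everywhere, we obtain the bound
\[ \int_X \|D\psi\|^{2}\,d\area_X\;\leq\;K A, \]
which holds for \emph{all} $K$-quasiconformal self-maps of $X$ simultaneously.

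The main step is to upgrade this $L^2$-bound to an $L^{2p}$-bound for some $p=p(K)>1$, uniformly over the whole family. Fix once and for all a finite atlas of conformal charts on $X$, together with round discs $Q_i\Subset 2Q_i$ lying in these charts such that the $Q_i$ already cover $X$; on $\bigcup_i 2Q_i$ the area element of $\rho_X$ is comparable to the Euclidean one, and the comparison constants will be absorbed below. By the higher integrability theorem for the derivative of a quasiconformal mapping (Gehring) there are $p=p(K)>1$ and $C=C(K)$, depending only on $K$, so that each $K$-quasiconformal $\psi$ obeys a reverse Hölder inequality of exponent $2p$ on each $Q_i$; combined with $\int_{2Q_i}\|D\psi\|^{2}\leq KA$ this gives $\int_{Q_i}\|D\psi\|^{2p}\,d\area_X\leq C_i$ with $C_i=C_i(K,X,\rho_X)$, and summing over the finitely many $i$ yields $\int_X\|D\psi\|^{2p}\,d\area_X\leq C_1=C_1(K,X,\rho_X)$. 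Since $J_\psi\leq\|D\psi\|^{2}$ pointwise, this furnishes the uniform bound $\|J_\psi\|_{L^p(X,\,\area_X)}\leq C_0\defeq C_1^{1/p}$, with $C_0$ independent of $\psi$.

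Assembling the pieces is now immediate. Let $q\defeq p/(p-1)$ be the conjugate exponent. For any compact $E\subseteq X$ and any $K$-quasiconformal self-map $\psi$ of $X$, Hölder's inequality gives
\[ \area_X(\psi(E))\;=\;\int_E J_\psi\,d\area_X\;\leq\;\|J_\psi\|_{L^p(X)}\,\bigl\|\mathbf{1}_E\bigr\|_{L^q(X)}\;\leq\;C_0\,\area_X(E)^{(p-1)/p}. \]
Hence, given $\theta>0$, it suffices to take $\eta\defeq(\theta/C_0)^{p/(p-1)}$: then $\area_X(E)\leq\eta$ forces $\area_X(\psi(E))\leq C_0\,\eta^{(p-1)/p}=\theta$, and $\eta$ does not depend on $\psi$.

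I expect the only real difficulty to lie in the uniformity of the second paragraph. One must verify that the reverse Hölder constant supplied by Gehring's theorem depends on $K$ alone (this is standard, following from the quasiconformal distortion together with a Caccioppoli-type estimate), and that the patching over the \emph{fixed} finite atlas does not secretly reintroduce a dependence on $\psi$; it is the globally uniform bound $\int_X\|D\psi\|^{2}\leq KA$ that rules this out. I also note that a purely soft argument based on the compactness of the family of $K$-quasiconformal self-maps (Lemma~\ref{lem:compactness}) does not appear to suffice: a single $K$-quasiconformal map can already concentrate a definite amount of image area inside arbitrarily small sets around a point (e.g.\ near a branch-like singularity of the form $z\mapsto z\lvert z\rvert^{1/K-1}$ in a chart), and it is precisely the self-improving higher integrability of the derivative that prevents this phenomenon from occurring uniformly over the family.
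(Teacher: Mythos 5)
Your proof is correct and reaches the same conclusion via a genuinely different route. The paper's argument lifts $\psi$ to a $K$-quasiconformal self-map $\tilde\psi$ of $\DD$ via the universal cover, normalizes by a M\"obius transformation so that the result lies in the compact family $\Psi_\DD$ of $K$-quasiconformal self-maps of $\DD$ fixing $0$, then invokes the classical planar area-distortion estimate of Bojarski (and Gehring--Reich, Astala) for that normalized family, and finally transfers the estimate back to $X$ by comparing the Euclidean and hyperbolic metrics on a compact fundamental polygon. You instead stay on $X$, use the area formula together with the pointwise bound $J_\psi\leq\|D\psi\|^{2}\leq K J_\psi$ to get the uniform $L^1$-bound $\int_X J_\psi\leq\area_X(X)$ and then $\int_X\|D\psi\|^{2}\leq K\area_X(X)$, and upgrade this to a uniform $L^p$-bound on $J_\psi$ (for some $p=p(K)>1$) via Gehring's reverse H\"older inequality applied on a fixed finite atlas; H\"older's inequality then does the rest. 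This is essentially the proof of Bojarski's theorem itself, reproduced on the surface rather than quoted, so the two arguments differ mainly in where the analytic input (higher integrability of quasiconformal Jacobians) is invoked. What the paper's route buys is a cleaner reduction: by lifting to $\DD$ one applies a quoted planar theorem to maps that are literally planar and normalized. What your route buys is directness and a slightly sharper conclusion (you get a power-law estimate $\area_X(\psi(E))\leq C_0\,\area_X(E)^{1-1/p}$ for free). Your closing remark, that a soft compactness argument via Lemma~\ref{lem:compactness} does not suffice on its own because uniform convergence of the maps does not force $L^1$-convergence of their Jacobians, is also a correct and useful observation.

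One small technical caveat worth flagging in your second paragraph: Gehring's reverse H\"older inequality is stated for quasiconformal maps between planar domains, so to apply it on the chart $2Q_i$ you implicitly need $\psi(2Q_i)$ to be representable in planar coordinates. Since $X$ is compact and the $K$-quasiconformal self-maps of $X$ form an equicontinuous family (the paper's Lemma~\ref{lem:compactness}), you can shrink the $Q_i$ once and for all so that $\psi(2Q_i)$ always lands inside a single chart, uniformly over all such $\psi$; alternatively one can lift to the universal cover and run the Gehring estimate there, which is close to collapsing your argument back into the paper's. Either way the gap is cosmetic and easy to close, but as written the step deserves a sentence of justification.
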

 \begin{proof}
   It was first observed 
     by Bojarski  \cite{bojarskibeltrami} that
     $K$-quasiconformal mappings, suitably normalised, 
     distort area by a power depending only on $K$; 
      see the first paragraph of~\cite{gehringreicharea}. Also compare~\cite{astalaarea,eremenkoarea} for
      the optimal result. 
       These results are normally stated for self-maps of the unit disc fixing the origin. 
       In particular, the statement of Lemma~\ref{lem:areadistortion}
       holds when $X$ is replaced by $\DD$, equipped with the \emph{Euclidean} metric $\rho_{\C}$, 
       and $\psi\in \Psi_{\DD}$, where $\Psi_{\DD}$ consists of all $K$-quasiconformal self-maps of $\DD$ fixing the origin. 

      Now let $X$ be compact and hyperbolic, and let $\pi\colon\D\to X$ be a universal covering. 
        Let $A\subset \D$ with $0\in A$ be a fundamental hyperbolic polygon for the deck transformations of $\pi$. 
       If $\psi\colon X\to X$ is $K$-quasiconformal, then we may lift $\psi$ to a quasiconformal map
       $\tilde{\psi}\colon \D\to\D$ with $\pi\circ\tilde{\psi} = \psi\circ \pi$, and such that 
       $\tilde{\psi}(0)\in A$. Let $\alpha\colon\D\to\D$ be the M\"obius transformation that maps 
        $\tilde{\psi}(0)$ to $0$; then
       \[ \phi \defeq \alpha\circ\tilde{\psi}\in \Psi_{\DD}. \]

      The set $\Psi_{\DD}$ is compact by~\cite[Corollary~4.4.3]{hubbardteichmuller}; it follows that 
       there is $r$, depending only on $A$ and $K$, such that 
        $\phi(A)\subset D(0,r)$. The Euclidean and hyperbolic metrics are comparable on
         $\overline{D(0,r)}$ by a factor of at most $C\defeq 2/(1-r^2)$.

      Let $\theta>0$. By Bojarski's observation, there is $\eta>0$ (depending on $K$ and $r$) such that 
       \begin{equation}\label{eqn:distortionproof}
          \area_{\C}(\phi(\tilde{E})) \leq \frac{\theta\cdot (1-r^2)^2}{4} 
      \end{equation}
      whenever $\tilde{E}\subset \D$ has area at most $\theta$. 

    Now let $E\subset X$ have hyperbolic area at most $\eta$, and let $\tilde{E} = \pi^{-1}(E)\cap \overline{A}$. 
      Then 
        \begin{align*} &\area_{\C}(\tilde{E}) < \area_{\DD}(\tilde{E}) = \area_X(\tilde{E}) \leq \eta \qquad\text{and hence}\\
              &\area_X(\psi(E)) = \area_{\DD}(\tilde{\psi}(\tilde{E})) = 
               \area_{\DD}(\phi(\tilde{E})) \leq \frac{4}{(1-r^2)} \cdot \area_{\C}(\phi(\tilde{E})) \leq \theta \end{align*}
     by~\eqref{eqn:distortionproof}. 
    \end{proof}

     If $X$ is a hyperbolic Riemann surface, we denote by $\Teich(X)$ the Teichm\"uller
       space of $X$. Recall that $\Teich(X)$ can be defined as the set of
       equivalence classes $[\mu]_{T}$ of bounded measurable Beltrami differentials with
       $\lVert \mu\rVert_{\infty} < 1$ \cite[Proposition~6.4.11]{hubbardteichmuller}. Here two such differentials $\mu$ and $\nu$
      are equivalent
       if there is a quasiconformal homeomorphism $\psi\colon X\to X$,
       isotopic to the identity relative the ideal boundary of $X$, 
       such that $\psi^*(\nu)=\mu$ \cite[Proposition~6.4.11]{hubbardteichmuller}.    
       Here $\psi^*(\nu)$ is the pull-back of the differential $\nu$ by $\psi$;
       see \cite[Definition~4.8.10 and Formula~4.8.34]{hubbardteichmuller}.
        
       Alternatively, 
       lift $\mu$ and $\nu$ to $\DD$ via the universal covering map. Then $\nu\in[\mu]_T$ 
      if and only if the solutions $\phi_{\mu},\phi_{\nu}\colon \D\to\D$ of the 
      corresponding Beltrami equations, normalised to fix $0$ and $1$, agree on $\partial \D$. 
             $\Teich(X)$ is a complex Banach manifold, which is finite-dimensional
        if and only if $X$ is a compact surface with at most 
        finitely many punctures removed; see \cite[Section~6.5]{hubbardteichmuller}. 

  \begin{lem}\label{lem:teichmueller}
     Let $X$ be a compact Riemann surface, let $K\geq 1$, and let $(\mu_n)_{n=0}^{\infty}$ 
	  be Beltrami differentials on $X$ of maximal dilatation at most $K$. 

     Then $[\mu_n]_T \to [0]_T$ in Teichm\"uller space if and only if 
        there are representatives $\nu_n\in [\mu_n]_T$ that converge to $0$ in measure.
  \end{lem}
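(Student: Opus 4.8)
The plan is to prove both directions of Lemma~\ref{lem:teichmueller} using the
characterisation of Teichm\"uller equivalence via boundary values of normalised
solutions of Beltrami equations on the disc, together with the compactness
statement of Lemma~\ref{lem:compactness}.

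\textbf{The ``if'' direction.} Suppose $\nu_n\in[\mu_n]_T$ with $\nu_n\to 0$ in
measure. Lift each $\nu_n$ to a Beltrami differential $\tilde{\nu}_n$ on $\DD$
via the universal covering $\pi\colon\DD\to X$, and let
$\phi_n\colon\DD\to\DD$ be the solution of the Beltrami equation with dilatation
$\tilde{\nu}_n$, normalised to fix $0$ and $1$ (and, say, a third boundary point,
or fixing the action on the deck group appropriately so that $\phi_n$ descends to
a self-map of $X$). Since $\lVert\tilde{\nu}_n\rVert_\infty\leq k<1$ uniformly,
the $\phi_n$ are uniformly $K$-quasiconformal, hence form a normal family. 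By
Lemma~\ref{lem:compactness} (applied on $\DD$, or rather its second assertion via
\cite[Theorem~I.4.6]{lehtounivalent}), any locally uniform limit $\phi$ of a
subsequence has dilatation equal to the limit in measure of the
$\tilde{\nu}_n$, namely $0$; so $\phi$ is conformal, and by the normalisation
$\phi=\id$. As the limit is independent of the subsequence, $\phi_n\to\id$
locally uniformly on $\DD$, hence uniformly on $\overline{\DD}$ by equicontinuity
of $K$-quasiconformal maps. In particular the boundary values $\phi_n|_{\partial\DD}$
converge to the identity. Comparing with the normalised solution for $0$ (which is
$\id$), this shows $[\nu_n]_T=[\mu_n]_T\to[0]_T$ in the Teichm\"uller metric.

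\textbf{The ``only if'' direction.} Conversely, suppose $[\mu_n]_T\to[0]_T$. By
definition of the Teichm\"uller metric there exist representatives
$\nu_n\in[\mu_n]_T$ with $\lVert\nu_n\rVert_\infty\to 0$; in particular, after
discarding finitely many terms we may assume $\lVert\nu_n\rVert_\infty\leq k$ for
some fixed $k<1$, and $\nu_n\to 0$ uniformly, hence in measure. (One must check
that such representatives can be taken with dilatation bounded by a fixed
constant: this is immediate since Teichm\"uller distance to the basepoint going to
$0$ means the extremal dilatations of maps in the class go to~$1$, and any
Beltrami form achieving dilatation close to~$1$ has $\lVert\cdot\rVert_\infty$
close to~$0$.) This already gives the desired $\nu_n$, so this direction is
essentially a restatement of the definition.

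\textbf{Main obstacle.} The routine but slightly delicate point is the
normalisation of the lifted solutions $\phi_n$ so that (a) they are genuinely
self-maps of $X$ (equivalently, conjugate the deck group to itself) and (b) the
limiting argument pins down the limit uniquely. This is where one uses the
equivalence ``$\nu\in[\mu]_T$ iff the normalised disc solutions agree on
$\partial\DD$'' stated in the excerpt: it reduces the whole statement to
convergence of boundary values of normalised solutions, which Lemma~\ref{lem:compactness}
and the compactness of $K$-quasiconformal families deliver. I expect no serious
difficulty beyond keeping the normalisations consistent; the compactness input
does all the real work.
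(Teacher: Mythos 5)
Your overall strategy coincides with the paper's: for ``only if'' you unpack the definition of the Teichm\"uller metric, and for ``if'' you lift to the disc, solve the Beltrami equations, and invoke normality of $K$-quasiconformal families together with \cite[Theorem~I.4.6]{lehtounivalent} to conclude that the normalised solutions converge to the identity on $\overline{\DD}$. The ``only if'' direction is fine as written.

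There is, however, a genuine gap at the very last step of the ``if'' direction. You deduce that the boundary values $\phi_n|_{\partial\DD}$ converge uniformly to the identity and then write ``this shows $[\mu_n]_T\to[0]_T$.'' That passage from convergence of normalised boundary values to convergence in the Teichm\"uller metric is exactly the nontrivial point, and it is \emph{not} automatic: the map $[\mu]_T\mapsto \phi_\mu|_{\partial\DD}$ is a continuous injection, but continuity of its inverse on the range is what is being asserted, and that does not follow for free. Indeed, the paper explicitly flags (in the remark following the lemma) that the ``if'' direction \emph{fails} when $\Teich(X)$ is infinite-dimensional, so no argument that ignores finite-dimensionality can be correct. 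The missing ingredient is that, because $X$ is compact, $\Teich(X)$ is finite-dimensional and hence the closed Teichm\"uller ball $\{[\mu]_T : d_T([\mu]_T,[0]_T)\le \log K\}$ (which contains every $[\mu_n]_T$) is compact; a continuous injection from a compact Hausdorff space is a homeomorphism onto its image, and \emph{only then} does convergence of the boundary traces force $[\mu_n]_T\to[0]_T$. Equivalently: by compactness extract a subsequence $[\mu_{n_k}]_T\to[\mu]_T$ in $\Teich(X)$; since boundary values depend continuously on the class and the limit of the boundary traces is the identity, $[\mu]_T=[0]_T$; as the limit is independent of the subsequence, the whole sequence converges. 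Without this compactness step the argument, as in the infinite-dimensional case, does not close. As a minor secondary point, the dilatations of the normalised solutions are the lifts of $\nu_n$, not of $\mu_n$; since $\nu_n\in[\mu_n]_T$ the boundary values agree, but this should be said explicitly when transferring the conclusion from $\phi_{\nu_n}$ to $[\mu_n]_T$.
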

  \begin{remark}[Remark 1]
    We shall only require the ``if'' direction. Note that this direction is false 
      when $\Teich(X)$ is infinite-dimensional; 
         compare~\cite[Section~7]{gardinerapproximation}.
  \end{remark}
  \begin{proof}
    We use the Teichm\"uller metric on $\Teich(X)$; see~\cite[Proposition~and~Definition~6.4.4]{hubbardteichmuller}. 
     With respect to this metric, the distance between $[\mu_n]_T$ and $[0]_T$ is 
	  $\log K$, where $K$ is the infimum of the maximal
	  dilatations of $\mu\in [\mu_n]$. In particular, 
       if $[\mu_n]_T \to [0]_T$, then there are representatives of $[\mu_n]$ whose maximal dilatation converges to $1$. 
       Hence these differentials converge to $0$ in measure. 

     For the ``if'' direction, note that the points having Teichm\"uller distance at most $\log K$ from $[0]_T$ is
      compact. (It is here that we use the fact that our Teichm\"uller space is 
       finite-dimensional.) Now lift the Beltrami differentials $\mu_n$ to the universal
       cover and solve the Beltrami equation, obtaining $K$-quasiconformal maps $\phi_{\mu_n}\colon\D\to\D$ fixing $0$ and $1$. 
       By \cite[Theorem I.4.6]{lehtounivalent}, the only limit function of $\phi_{\mu_n}$ as $n\to\infty$ is given by the identity, showing that
      indeed $[\mu_n]_T\to[0]_T$. 
  \end{proof}

             %%% Gardiner: Approximation of infinite dimensional Teichmuller spaces,
             %%%   Theorem 7.1.

 We also require a result concerning the tangent space of $\Teich(X)$ at $X$, which is 
     represented by \emph{infinitesimal classes} 
       $[\mu]_B$ of
       bounded measurable Beltrami differentials. By definition, $\mu\in[0]_B$ if 
           \begin{equation}\label{eqn:pairing}
             \langle \mu , q\rangle \defeq \int_X \mu\cdot q = 0 \end{equation}
      for all $q\in A^1(X)$, and $\mu\in [\nu]_B$ are infinitesimally equivalent if $\mu-\nu\in[0]_B$. 
       Here $A^1(X)$ is the Bergman space of
       integrable holomorphic quadratic differentials on $X$. 
       In fact, the pairing~\eqref{eqn:pairing} induces an isomorphism between
       the tangent space to Teichm\"uller space and the dual space of
       $A^1(X)$~\cite[Proposition~6.6.2]{hubbardteichmuller}. 

\begin{lem}\label{lem:tangent}
 Let $X$ be a compact hyperbolic Riemann surface, $D\subset X$ a non-empty 
  sub-surface, and let
   $\mu$ be a Beltrami differential on $X$. Then there is 
   $\nu \in [\mu]_B$ such that
    $\nu=0$ a.e.\ on $X\setminus D$. 
\end{lem}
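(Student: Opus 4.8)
<br>

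The plan is to use the duality between the tangent space $T_X\Teich(X)$ and the Bergman space $A^1(X)$, together with a standard averaging/support-shifting argument. Recall that $[\mu]_B$ is determined by the values $\langle\mu,q\rangle = \int_X \mu q$ for $q\in A^1(X)$. So it suffices to produce a Beltrami differential $\nu$, supported on $\overline{D}$, with $\langle\nu,q\rangle = \langle\mu,q\rangle$ for every $q\in A^1(X)$.

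First I would consider the restriction map $r\colon A^1(X)\to A^1(D)$ (or rather to $L^1(D)$), $q\mapsto q|_D$. Since $X$ is compact hyperbolic, $A^1(X)$ is finite-dimensional, and the map $q\mapsto q|_D$ is injective: a holomorphic quadratic differential vanishing on the open set $D$ vanishes identically by the identity theorem. Hence $r$ is an injective linear map from the finite-dimensional space $A^1(X)$ into $L^1(D)$, and its image is a finite-dimensional subspace $W\subset L^1(D)$. The linear functional $q\mapsto \langle\mu,q\rangle$ factors through $r$: it is $\ell\circ r$ where $\ell$ is the functional on $W$ given by $\ell(q|_D) = \int_X\mu q$. (This is well-defined precisely because $r$ is injective.) By Hahn–Banach, or more simply because $W$ is finite-dimensional and $L^\infty(D)=L^1(D)^*$ acts on it, there exists $\nu_0\in L^\infty(D)$ with $\int_D \nu_0 (q|_D) = \ell(q|_D) = \int_X\mu q$ for all $q\in A^1(X)$. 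Extend $\nu_0$ by zero to all of $X$ to get $\nu$; then $\nu=0$ a.e.\ on $X\setminus D$ and $\langle\nu,q\rangle = \langle\mu,q\rangle$ for all $q$, so $\nu-\mu\in[0]_B$, i.e.\ $\nu\in[\mu]_B$, as required.

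The one point requiring slight care is that a Beltrami \emph{differential} is not just an $L^\infty$ function but a $(-1,1)$-form $\nu\,d\bar z/dz$; however, the pairing $\langle\nu,q\rangle$ and the condition "$\nu=0$ a.e.\ on $X\setminus D$" are both coordinate-independent, and the Hahn–Banach step can be carried out locally and patched, or simply phrased intrinsically as choosing an element of the space $L^\infty$ of Beltrami differentials that induces the prescribed functional on the finite-dimensional space $A^1(X)$, supported on $\overline D$. There is no need to bound $\|\nu\|_\infty$ in the statement, so we need not worry about the norm of the Hahn–Banach extension.

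The main (and essentially only) obstacle is the injectivity of the restriction $A^1(X)\to A^1(D)$, which is what forces us to use finite-dimensionality of $A^1(X)$ — equivalently compactness of $X$ — and the identity theorem for holomorphic quadratic differentials. Everything else is elementary linear algebra and the definition of infinitesimal equivalence via the pairing~\eqref{eqn:pairing}. I expect the write-up to be short.
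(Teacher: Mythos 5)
Your proof is correct and takes essentially the same route as the paper: both view $A^1(X)$ as a finite-dimensional subspace of a function space on $D$, apply Hahn--Banach to extend the functional $q\mapsto\langle\mu,q\rangle$, and represent the extension by an $L^\infty$ element supported in $D$ which is then extended by zero. The only cosmetic differences are that you make the injectivity of the restriction map explicit (the paper uses it implicitly when identifying $A^1(X)$ with a subspace of $A^1(D)$) and you invoke $L^\infty(D)=L^1(D)^*$ directly rather than the paper's citation of the Teichm\"uller-theoretic duality between Beltrami differentials and the dual of $A^1(D)$.
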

\begin{proof}
    Let $\phi$ be the linear functional on $A^1(X)$ induced by $\mu$ via the 
       pairing~\eqref{eqn:pairing}. 

    The 
    restriction of any element of $A^1(X)$ to $D$ is an element of $A^1(D)$. 
    So we can think of $A^1(X)$ as a finite-dimensional linear subspace of 
    $A^1(D)$. Since the space is finite-dimensional, the linear functional
    $\phi$ is continuous
    also with respect to the norm on $A^1(X)$ induced from that of $A^1(D)$. 
    By the Hahn--Banach theorem, $\phi$ extends to a continuous
    linear map $\tilde{\phi}\colon A^1(D)\to \C$. 
    By \cite[Proposition~6.6.2]{hubbardteichmuller}, this functional
    $\tilde{\phi}$ is generated by some Beltrami differential $\tilde{\nu}$ on $D$. 
    
   Extend $\tilde{\nu}$ to $X$ by setting it to be $0$ outside of $D$. Then 
     $\tilde{\nu}$ is in the same infinitesimal class as $\mu$ by construction,
     and we are done. 
\end{proof}

 Now we are ready to prove Proposition~\ref{prop:straightening}. 

 \begin{proof}[Proof of Proposition~\ref{prop:straightening}]
   By Lemma~\ref{lem:reduction}, we may assume that $X$ is compact and hyperbolic, and endowed with
    the hyperbolic metric. Let $D$ be an open disc in 
     $X\setminus \overline{S}$. 
     Let $\hat{\mathcal{V}}$ denote the set of Beltrami differentials supported on
	 $D$ and whose maximal dilatation is bounded by $K$, and let 
     $\mathcal{V}\subset \Teich(X)$ be the corresponding subset of
     Teichm\"uller space. 
     The projection map $\pi\colon \mu\to[\mu]_T$ from Beltrami differentials to
     Teichm\"uller space is analytic \cite[Theorem~6.5.1]{hubbardteichmuller}. 
     The derivative at $[0]_T$ of this map is precisely the projection
     $\mu\to [\mu]_B$ \cite[Corollary~6.6.4]{hubbardteichmuller}. 
     Hence Lemma~\ref{lem:tangent} implies that the restriction 
     $\pi\colon \hat{\mathcal{V}}\to \mathcal{V}$ 
     is a submersion near $[0]_T$, and therefore 
     covers a neigbourhood of $[0]_T$ in $\Teich(X)$. 
     
     Indeed, recall that $\Teich(X)$ is
     finite-dimensional, so by Lemma~\ref{lem:tangent}
     there are Beltrami differentials 
     $\mu_1\dots,\mu_n\in \hat{\mathcal{V}}$ whose infinitesimal classes
     form a basis of the tangent space of $\Teich(X)$ at $[0]_T$. Consider the 
     finite-dimensional subset 
        $\mathcal{U}= \langle \mu_1,\dots,\mu_n\rangle \cap \hat{\mathcal{V}}$; 
     then the derivative at $[0]_T$ of 
     $\pi \colon \mathcal{U}\to \mathcal{V}$ is invertible, and the claim follows by the
     inverse mapping theorem.

  By Lemma~\ref{lem:teichmueller}, if $\eta$ is sufficiently small,
    then $[\mu]_T\in \mathcal{V}$ for 
      any Beltrami differential $\mu$ on $X$ which has maximal
      dilatation at most $K$ and is supported on a set of measure less than 
      $\eta$. So for any such $\mu$, there is a Beltrami differential
         $\nu\in \hat{\mathcal{V}}$ and an at most
      $K^2$-quasiconformal map $\psi\colon X\to X$, isotopic to the identity,
      such that 
      $\psi^*(\nu)= \mu$. 
      
    Let $\mu_n$ be a sequence of Beltrami differentials on $S$ 
	 of maximal dilatation bounded by $K$, and such that the area of the support
	 of the  complex dilatation tends to $0$ as $n\to\infty$. Furthermore,
      let $D_k\subset X\setminus \overline{S}$ be a shrinking sequence of discs
      whose area tends to zero. 
      
     For $n$ sufficiently large, we can construct a map $\psi_n$ as
       above, using $D=D_{k(n)}$, with $k(n)\to\infty$ as $n\to\infty$. 
	 Then the support of the  complex dilatation $\tilde{\mu}_n$ of $\psi_n$
      is contained in the union of the support of $\mu_n$ (whose
       area tends to zero) 
             and the set $\psi_n^{-1}(D_{k(n)})$. 
      By Lemma~\ref{lem:areadistortion}, the 
      area of the latter set also tends to zero as $n\to\infty$. 
            
   By Lemma~\ref{lem:compactness}, every limit function
      of $(\psi_n)$ as $n\to\infty$ is a conformal automorphism of $X$;
      since each $\psi_n$ is isotopic to the identiy, so are the limit functions.
      But a non-trivial conformal isomorphism $\phi$ of $X$ cannot be isotopic
      to the identity (this result is usually attributed to Hurwitz). 
      Indeed, if we lift $\phi$ to the universal cover, we obtain
      a M\"obius transformation $M$ on the disc; $\phi$ is isotopic to the identity
      if and only if the boundary values of $M$, and therefore $M$ itself,
      agree with the identity; compare \cite[Proposition~6.4.9]{hubbardteichmuller}.  

  So $\psi_n$ converges to the identity. It follows that, by choosing
    $\eta$ sufficiently small in the statement of the proposition, the map $\psi$ 
    we have constructed can be chosen as close to the identity as desired. 
    In particular, we can ensure that
    $\psi^{-1}(D)\cap S = \emptyset$, and the restriction
    $\psi|_S$ solves the Beltrami equation for $\mu$, as desired.     
 \end{proof}
 \begin{remark}
    Recent work of Kahn, Pilgrim and Thurston \cite{kahnpilgrimthurston}
    more generally describes 
    when a topologically finite Riemann surface can be embedded into another,
    using an extremal length criterion. This can also be used to
    deduce Proposition~\ref{prop:straightening}, but the approach above  is more
    elementary. 
 \end{remark}

 Finally, we record the following version of Lemma~\ref{lem:areadistortion},
    for application on compact subsets of non-compact surfaces. 
\begin{prop}[Area distortion]\label{prop:areadistortion}
 Let $X$ be a Riemann surface, equipped with a conformal metric $\rho$, and let 
   $S\neq X$ be a finite piece of $X$. Let $K\geq 1$ and let 
     $B\subset S$ be compact. 
     Then there is $\eps>0$ and a function $\theta\colon (0,\infty)\to(0,\infty)$ with 
      $\theta(t)\to 0$ as $t\to 0$, such that the
    following holds.
   Suppose that $\psi$ is a $K$-quasiconformal mapping from $S$ into $X$ such that
     $\dist_{\rho}(\psi(z),z) \leq \eps$ for all $z\in S$. 
     Then, for all $A\subset B$,
      \[ \area_{\rho}(\psi(A)) \leq \theta_{\rho}(\area(A)). \]
\end{prop}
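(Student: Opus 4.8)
The plan is to deduce the statement, working in finitely many holomorphic coordinate charts around $B$, from the higher integrability of the Jacobian of a $K$-quasiconformal map (Gehring's reverse H\"older inequality, or equivalently Astala's sharp $L^p$-estimate \cite{astalaarea}; compare the tools used in the proof of Lemma~\ref{lem:areadistortion}), combined with H\"older's inequality. The hypothesis that $\psi$ moves every point by at most $\eps$ will be used only to guarantee that, chart by chart, $\psi$ carries a fixed coordinate disc into a single chart, so that the planar estimates apply with constants independent of $\psi$. Concretely, I would first fix finitely many holomorphic charts $\phi_j\colon U_j\to\D$, $j=1,\dots,n$, with each $\overline{U_j}$ compact in $S$, chosen so that the discs $V_j\defeq\phi_j^{-1}(D(0,\tfrac12))$ cover the compact set $B$. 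On each $\overline{U_j}$ the conformal metric $\rho$ is comparable to the Euclidean metric of the chart by some factor $c>1$ (the standard comparability of conformal metrics on compact sets, already used above). Using compactness of $B$ and of the charts, I would then choose $\eps>0$ so small that any homeomorphism $\psi\colon S\to X$ with $\dist_\rho(\psi(z),z)\le\eps$ maps $\phi_j^{-1}(\overline{D(0,\tfrac34)})$ into some chart $U_{k(j)}$; for every such $\psi$ the map
\[ f_j\defeq\phi_{k(j)}\circ\psi\circ\phi_j^{-1}\colon D(0,\tfrac34)\to\D \]
is then $K$-quasiconformal and injective, and satisfies $\area_\C\bigl(f_j(D(0,\tfrac34))\bigr)\le\pi$.

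Now fix a $K$-quasiconformal $\psi\colon S\to X$ with $\dist_\rho(\psi(z),z)\le\eps$ and a set $A\subset B$, and put $A_j\defeq A\cap V_j$, $E_j\defeq\phi_j(A_j)\subset D(0,\tfrac12)$. Fix an exponent $p$ with $1<p<K/(K-1)$ (any $p>1$ if $K=1$) and let $p'$ be its conjugate. By a covering argument, the reverse H\"older inequality for the Jacobian of the $K$-quasiconformal map $f_j$ on $D(0,\tfrac34)$ yields a constant $C=C(p,K)$, independent of $\psi$, with
\[ \int_{D(0,\frac12)}J_{f_j}^{\,p}\,\deriv A\ \le\ C\,\area_\C\bigl(f_j(D(0,\tfrac34))\bigr)^{p}\ \le\ C\,\pi^{p}. \]
Since $f_j$ is injective, $\area_\C(f_j(E_j))=\int_{E_j}J_{f_j}\,\deriv A$, and H\"older's inequality gives $\area_\C(f_j(E_j))\le C^{1/p}\pi\,\area_\C(E_j)^{1/p'}$. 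Summing over $j$ and converting between $\rho$-area and chart area through the factor $c$,
\[ \area_\rho(\psi(A))\ \le\ \sum_{j=1}^{n}\area_\rho(\psi(A_j))\ \le\ c^{2}\sum_{j=1}^{n}\area_\C(f_j(E_j))\ \le\ n\,C^{1/p}\,\pi\,c^{2+2/p'}\,\bigl(\area_\rho(A)\bigr)^{1/p'}. \]
As $1/p'>0$, the function $\theta(t)\defeq n\,C^{1/p}\,\pi\,c^{2+2/p'}\,t^{1/p'}$ tends to $0$ as $t\to0^{+}$, and $\eps$ depends only on $X$, $\rho$, $S$, $B$ and $K$; this would establish the proposition.

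I expect the only point requiring care to be the higher-integrability estimate in the previous paragraph, and in particular the verification that its constant $C(p,K)$ depends only on $p$, $K$ and the fixed radii $\tfrac12<\tfrac34$, not on the individual map $\psi$. This is standard (Gehring's reverse H\"older inequality for Jacobians of quasiconformal maps; equivalently Astala's $L^p$-integrability theorem), and the required uniformity is exactly what the fixed chart cover together with the a priori bound $\area_\C(f_j(D(0,\tfrac34)))\le\pi$ provide. As an alternative one could complete a finite piece of $X$ around $B$ to a compact hyperbolic surface and extend $\psi$ to a quasiconformal self-map of it so as to quote Lemma~\ref{lem:areadistortion} verbatim; but controlling the dilatation of such an extension uniformly over all admissible $\psi$ seems less convenient than the chart-by-chart argument above.
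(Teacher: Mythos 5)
Your proof is correct but takes a genuinely different route from the paper's. The paper embeds a slightly smaller piece $\hat{S}$ with $B\subset\hat{S}\subset S$ into a compact hyperbolic surface $\tilde{X}$ (exactly as in Lemma~\ref{lem:reduction}), extends $\psi|_{\hat{S}}$ to a $\tilde{K}$-quasiconformal self-map of $\tilde{X}$ equal to the identity off $S$ --- possible with $\tilde K$ independent of $\psi$ once $\eps$ is small, by interpolating in the collar $S\setminus\hat S$ where $\psi$ is uniformly close to the identity --- and then quotes Lemma~\ref{lem:areadistortion} verbatim. You instead stay local: a finite holomorphic atlas around $B$, an $\eps$ small enough that each chart disc is carried back into a chart, and Gehring's reverse H\"older inequality for the Jacobian (equivalently Astala's $L^p$-integrability) combined with H\"older's inequality. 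Both arguments rest on the same planar area-distortion phenomenon (Bojarski--Gehring--Astala); the paper's version recycles an already-proven global lemma, whereas yours is self-contained, sidesteps the extension step you rightly flag as the least convenient part of the alternative, and yields an explicit power-law $\theta(t)=Ct^{1/p'}$ rather than an abstract modulus. One small simplification: with your cover $\{U_j\}$ you may as well take $k(j)=j$, choosing $\eps$ so that $\psi$ carries $\phi_j^{-1}(\overline{D(0,3/4)})$ back into $U_j$ itself, which removes the bookkeeping of which chart receives which image.
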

\begin{proof}
    We can deduce the claim by applying
      Lemma~\ref{lem:areadistortion} to a compact hyperbolic Riemann surface $\tilde{X}$ containing $S$, obtained 
      exactly as in the proof of Lemma~\ref{lem:reduction}. 

    Let $\hat{S}\supset B$ be a slightly smaller finite piece $\hat{S}\subset S$. 
     If $\eps$ is chosen sufficiently small, we have $\psi(\hat{S}) \subset S$ and we may extend
      $\psi|_{\hat{S}}$ to a $\tilde{K}$-quasiconformal map $\tilde{X}\to\tilde{X}$ which is the
      identity off $S$. Furthermore~-- again for sufficiently small $\eps$~-- the constant 
      $\tilde{K}$ is independent of $\psi$. Now the claim follows from Lemma~\ref{lem:areadistortion}. 
\end{proof}

\section{Construction of equilateral triangulations}     

  Our proof of Theorem~\ref{thm:triangulation} relies on a decomposition of our non-compact Riemann surface $X$ into analytically bounded finite pieces;
    see Figure~\ref{fig:decomposition}
    
\begin{figure}
 \begin{center}
  \includegraphics[width=\textwidth]{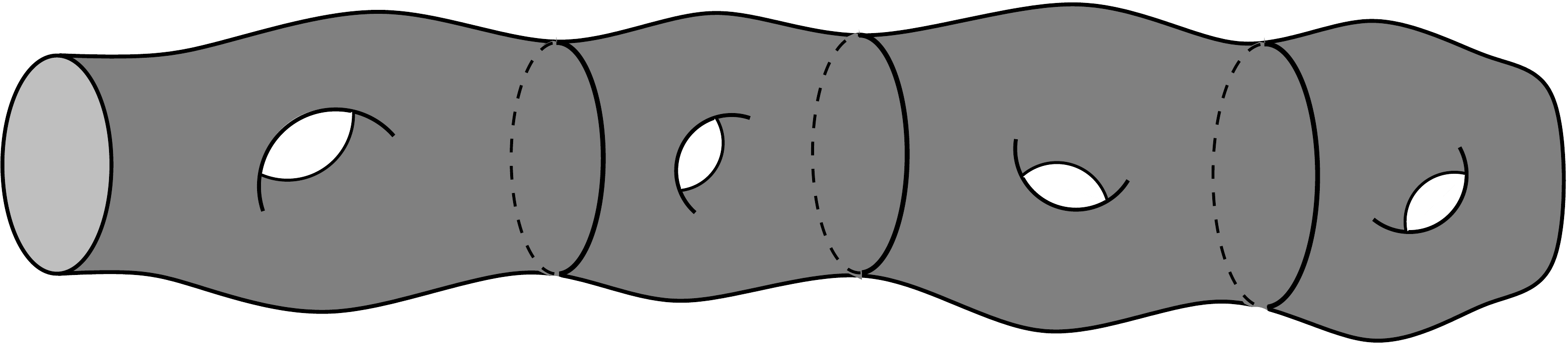}
  \end{center}
 \caption{A decomposition as in
   Proposition~\ref{prop:decomposition}, for an infinite-genus Riemann surface
    $X$.\label{fig:decomposition}}
\end{figure}

   \begin{prop}\label{prop:decomposition}
     Every non-compact Riemann surface $X$ can be written as 
        \[ X = \bigcup_{j=0}^{\infty} \overline{U_j}, \]
     where the $U_j$ are pairwise disjoint analytically bounded 
     finite pieces of $X$, such that every boundary curve 
     $\gamma$ of $U_j$ is also a boundary curve of exactly one other 
     piece $U_{j'}$ ($j'\neq j$).
  \end{prop}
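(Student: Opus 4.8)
The plan is to build the pieces $U_j$ from an exhaustion of $X$ by compact bordered subsurfaces with analytic boundary, and then take for the $U_j$ the connected components of the regions lying between consecutive members of the exhaustion.

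\emph{An exhaustion by compact bordered subsurfaces.} Since $X$ is a non-compact (hence connected, and second-countable by \cite{Rado1925}) smooth surface, it carries a proper smooth function $f\colon X\to[0,\infty)$. By Sard's theorem I would choose an increasing sequence $c_0<c_1<\cdots$ of regular values of $f$ with $c_n\to\infty$; then $X_n\defeq f^{-1}([0,c_n])$ is a compact smooth subsurface-with-boundary, $X_{n-1}\subset\interior X_n$, and $\bigcup_n X_n=X$. Each boundary component of $X_n$ is a smooth Jordan curve in $X$.

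\emph{Making the boundary analytic.} Next I replace these smooth boundary curves by analytic ones. Fix $n$ and a component $\gamma\subset\partial X_n$. Taking a sufficiently thin tubular neighbourhood, I obtain a relatively compact open annulus $N\ni\gamma$ with $\overline N\subset\interior X_{n+1}\setminus X_{n-1}$, whose two ends are honest annular ends; hence $N$ is conformally equivalent to a round annulus $\{\lvert z\rvert\in(1,R)\}$ with $R<\infty$, and I let $\gamma'\subset N$ be the image of the core circle $\{\lvert z\rvert=\sqrt R\}$, an analytic Jordan curve homotopic to $\gamma$ in $N$. Performing this for all boundary components of $X_n$ and letting $X_n'$ be the compact subsurface of $X$ bounded by the resulting analytic curves, one checks — provided the tubular neighbourhoods are chosen small enough — that $X_{n-1}\subset X_n'\subset\interior X_{n+1}$ and that the curve systems $\partial X_n'$ are pairwise disjoint; consequently $X_{n-1}'\subset X_n'$. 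Thus $(X_n')_{n\ge0}$ is again an exhaustion of $X$ by compact bordered subsurfaces, now with pairwise disjoint analytic boundary curves. Set $\Gamma\defeq\bigcup_n\partial X_n'$ and $X_{-1}'\defeq\emptyset$.

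\emph{The pieces.} Let $(U_j)_{j\ge0}$ enumerate all connected components of the open sets $\interior X_n'\setminus X_{n-1}'$, $n\ge0$. Each $U_j$ is open, connected, relatively compact in $X$, of finite genus, and has finitely many boundary components, all of which are curves from $\Gamma$ (with two-sided analytic collars, hence non-degenerate); so $U_j$ is an analytically bounded finite piece of $X$ in the sense of Definition~\ref{defn:finitepiece}. The $U_j$ are pairwise disjoint, and $\bigcup_j\overline{U_j}=X$ because $\bigcup_n X_n'=X$ and each $X_n'$ is the closure of its interior while each $\interior X_n'\setminus X_{n-1}'$ has finitely many components. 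Finally, let $\gamma$ be a boundary curve of some $U_j$; then $\gamma\subset\partial X_k'$ for a unique $k$, and a sufficiently thin annular neighbourhood of $\gamma$ in $X$ is divided by $\gamma$ into two sides, one of which lies in a component of $\interior X_k'\setminus X_{k-1}'$ and the other in a component of $\interior X_{k+1}'\setminus X_k'$. These two components are distinct (one meets $\interior X_k'$, the other is disjoint from $X_k'$), any piece having $\gamma$ on its boundary must be one of them, one of them is $U_j$, and the other is the unique further piece $U_{j'}$ of which $\gamma$ is a boundary curve. This is exactly the assertion of the proposition.

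The smooth exhaustion and the bookkeeping in the last paragraph are routine; the step that needs care is the passage to analytic boundary curves while maintaining the nesting $X_{n-1}'\subset X_n'\subset\interior X_{n+1}'$, which I would handle by carrying out the perturbations one level at a time inside the disjoint regions $\interior X_{n+1}\setminus X_{n-1}$. This confinement of each perturbation is precisely what then forces the ``exactly one other piece'' property, via the elementary fact that an analytic Jordan curve in a surface locally separates it into exactly two sides.
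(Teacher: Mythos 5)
Your proposal is correct, and the overall architecture agrees with the paper's: both proofs reduce the proposition to exhibiting a nested exhaustion of $X$ by analytically bounded finite pieces $X_0\subset X_1\subset\cdots$ with $\overline{X_n}\subset X_{n+1}$, and then take the pieces $U_j$ to be the connected components of the ``shells'' $\interior X_{n+1}\setminus \overline{X_n}$. Where you differ is in how the exhaustion is produced. The paper triangulates $X$ via Rad\'o's theorem, builds a combinatorial exhaustion $K_{j+1}=\bigcup\{T\in\mathcal T: T\cap K_j\neq\emptyset\}$ (which is automatically connected, a small convenience for the later bookkeeping in the proof of Theorem~\ref{thm:triangulation}), and then ``shrinks slightly'' to get analytic boundaries. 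You instead take the Morse-theoretic route: a proper smooth $f\colon X\to[0,\infty)$, regular sublevel sets via Sard, and then a conformal perturbation to make the boundaries analytic by conformally uniformising a thin collar and replacing the smooth boundary curve with a round core circle. Both routes lean on the same foundational input (second countability of $X$, i.e.\ Rad\'o), so this is more a choice of toolkit than a different argument; your version is in fact more explicit than the paper's about the smoothing step, which the paper dispatches in one sentence. One point you might tighten is the inductive nesting $X_{n-1}'\subset\interior X_n'$: it holds because the collars $N$ at level $n$ and the collars used at level $n-1$ can be taken pairwise disjoint (both shrink to the disjoint level curves $\partial X_{n-1}$ and $\partial X_n$), which you gesture at with ``small enough'' but do not spell out.
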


   Proposition~\ref{prop:decomposition} is a purely topological consequence of Rad\'o's 
     theorem. Since we are not aware of a modern elementary
     account of this nature, we give the simple deduction below. 
     The existence of a decomposition appears to have been first 
     observed~-- for general open, triangulable, not necessarily orientable surfaces~-- 
     by Ker\'ekj\'art\'o in 1923 \cite[\S5.1,~pp 166--167]{Kerekjarto1923}. However,
      for his application (the topological classification of open surfaces), 
      Ker\'ekj\'art\'o requires additional properties of the decomposition,
      which 
      means that some additional care is required in the construction.
          
     Though favourably reviewed by Lefschetz in 1925 \cite{lefschetzreview}, in subsequent 
     years K\'erek\-j\'art\'o's work has been criticised,
     sometimes harshly~\cite{kerekjartoencyclopedia},
      for a lack of 
     rigour. In particular, 
     Richards \cite{richardsclassification} observes that the justification for 
     Ker\'ekj\'art\'o's classification theorem 
     contains gaps (which Richards fills). 
     Nonetheless, K\'erekj\'arto's argument for the 
     existence of the decomposition is correct, if somewhat informal. 
      Of course, much more precise
     statements are known, particularly in the case of Riemann surfaces; see e.g.\ \cite{structuretheorems}\footnote{Observe that Theorem~1.1 of \cite{structuretheorems},
    for topological surfaces, also follows from the earlier work of K\'erekj\'arto and Richards.}. 
    
\begin{proof}[Proof of Proposition~\ref{prop:decomposition}]
  It is equivalent to show that $X$ can be written as the increasing union of analytically
   bounded finite
   pieces $(X_j)_{j=0}^{\infty}$ with $\overline{X_j}\subset X_{j+1}$. 
   Indeed, the desired decomposition then consists of $X_0$ together with the 
   connected components of $X_{j+1}\setminus \overline{X_j}$, which are
   themselves finite pieces of $X$. 
   
  Let $\mathcal{T}$ be a triangulation of $X$, which exists by Rad\'o's theorem. 
  Fix a triangle $K_0\in\mathcal{T}$; recall that $K_0\subset X$ is compact. 
  We inductively define a sequence $(K_j)_{j=0}^{\infty}$ of compact, connected sets
   by 
   \[ K_{j+1} \defeq \bigcup\{T\in\mathcal{T}\colon T\cap K_j \neq\emptyset\}. \] 
   Then $\bigcup K_j = X$, and each interior $\interior(K_j)$ is connected, contains
   $K_{j-1}$, and is a finite piece of $X$. 
   Hence we may shrink $K_j$ (whose boundary may not be analytic) 
   slightly to obtain an analytically bounded finite piece
   $X_j$ that still contains $K_{j-1}$. 
\end{proof}

\begin{proof}[Proof of Theorem~\ref{thm:triangulation}]
 Let $X$ be a non-compact Riemann surface; we shall construct an equilateral triangulation 
   on $X$. Let $\rho$ be a complete conformal metric on $X$; for example, 
     a metric of constant curvature. 
    As mentioned in the introduction, Theorem~\ref{thm:triangulation} is trivial 
        when $X$ is Euclidean (and hence either the plane or the punctured plane). 
        So we could assume that $X$ is hyperbolic, and $\rho$ the hyperbolic metric.
        However, our 
        construction works equally well regardless of the nature of the metric,
        so we shall not require this assumption.
     
      For the remainder of the
       section, fix a decomposition $(U_j)_{j=0}^{\infty}$ of $X$ into 
       analytically bounded finite pieces, as in 
       Proposition~\ref{prop:decomposition}.

      Let $\Gamma$ be the set of all boundary curves of the $U_j$.
       For every $\gamma\in\Gamma$, there are unique $j_1<j_2$ such that 
       $\gamma$ is on the boundary of $U_{j_1}$ and of $U_{j_2}$. We 
       say that 
       $\gamma$ is an \emph{outer curve} of 
       $U_{j_1}$ and an \emph{inner curve} of $U_{j_2}$, and write 
       $\iota_-(\gamma)\defeq j_1$ and 
           $\iota_+(\gamma)\defeq j_2$.
        For $j\geq 0$, let $\Gamma^-(U_j)$ denote the set of 
         inner boundary curves of $U_j$, and let 
          $\Gamma^+(U_j)$ denote the set of
        all outer boundary curves of $U_j$.

         We may assume that the pieces $U_j$ are numbered such that 
         \[ X_j = \bigcup_{k=0}^j U_k  \cup \bigcup\{\gamma\in\Gamma\colon \iota_+(\gamma)\leq j\} \]
            is connected for all $j\geq 0$; hence $X_j$ is a finite piece of $X$. 
       Let 
       $\Gamma(X_j)$ denote the boundary curves of $X_j$; that is, 
       \[ \Gamma(X_j) = \{ \gamma\in\Gamma\colon
                                         \iota_-(\gamma) \leq j < \iota_+(\gamma)\}. \]
See Figure~\ref{fig:DefnX}.

\begin{figure}
 \begin{center}
\includegraphics[width=\textwidth]{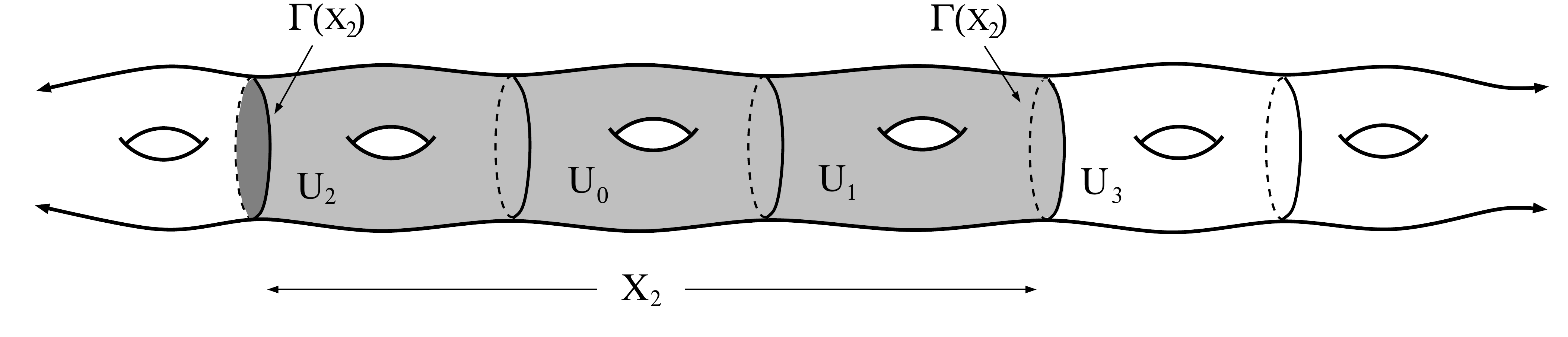}
 \end{center}
\caption{\label{fig:DefnX}The definition of $X_j$ and its set of boundary curves $\Gamma(X_j)$.}
\end{figure}
                      
     For each $\gamma$, we fix an analytic parameterisation
   $\phi^{\gamma}\colon S^1\to \gamma$. Let $\hat{R}^{\gamma}>1$ be so small that 
       $\phi^{\gamma}$ extends to a conformal isomorphism from
       $\AA(\hat{R}^{\gamma})$ onto an annulus $\hat{A}^{\gamma}$;
        we may assume that different $\hat{A}^{\gamma}$ have 
        pairwise disjoint closures.
                Set 
           \[ \hat{A}_{+}^{\gamma} \defeq \phi^{\gamma}(\AA_{+}(\hat{R}^{\gamma})) \qquad\text{and}\qquad \hat{A}_{-}^{\gamma} \defeq \phi^{\gamma}(\AA_{-}(\hat{R}^{\gamma})).\]
            Precomposing by $z\mapsto 1/z$ and decreasing $\hat{R}_{\gamma}$ if necessary, we can ensure that 
             $A_+^{\gamma} \subset U_{\iota_+(\gamma)}$
         and $A_-^{\gamma} \subset U_{\iota_-(\gamma)}$. 
         For $R\leq \hat{R}_{\gamma}$, we also define 
             \[ A^\gamma(R) \defeq \phi_\gamma(\AA(R)) \qquad \text{and}\qquad A^\gamma_{\pm}(R)\defeq \phi_\gamma(\AA_{\pm}(R)). \]
           For example, $\hat{A}^{\gamma} = A^{\gamma}(\hat{R}_{\gamma})$. 

      We use these annuli to define annular extensions of
       $\overline{X_j}$ in $X$ as follows. 
       Let $\hat{X}_j$ be the union of $\overline{X_j}$ and the annuli
          $\hat{A}^{\gamma}$ for all boundary curves $\gamma$ of $X_j$; i.e., 
            \[ \hat{X}_j = X_j \cup \bigcup\bigl\{
                                  ( \gamma \cup \hat{A}^{\gamma}_+)\colon 
                                  \iota_-(\gamma)\leq j < \iota_+(\gamma) \bigr\}. \]
           Then $X_j$ an analytically bounded finite piece of $\hat{X}_j$. 

     Fix the constant $K_0$ from  Proposition~\ref{prop:hemmedtriangulation}. 
      We define the desired triangulation piecewise, through an inductive
      construction. The underlying strategy can be described as follows.

     Apply Proposition~\ref{prop:hemmedtriangulation} to 
         construct a $K_0$-quasiconformal function $g_0\colon U_0\to E_0$, where $U_0$ is 
         considered as a 
         hemmed surface with boundary parameterisations $\phi^{\gamma}$, and $E_0$ is an equilateral surface-with-boundary. 
         In the following, we shall use without comment the properties described in the conclusion of
			Proposition~\ref{prop:hemmedtriangulation}. In particular, the equilateral triangulation of $E_0$ has local degree bounded by $s_0$, and $g_0\circ\phi^{\gamma}$ 
			maps every edge of the 
			partition $\Xi_{d^{\gamma}}$ to an edge of $E_0$ in length-respecting fashion. 
         If the degrees $d^{\gamma}$ are sufficiently large, then 
	 the  complex dilatation of $g_0$ is supported on a set of small area,
         and by Proposition~\ref{prop:straightening}, there is a quasiconformal map
         $\psi_0$ from $\hat{X}_0$ into $X$ such that 
         $f_0 \defeq g_0\circ \psi_0^{-1}$ is conformal, and $\psi_0$ is close
         to the identity. 

       Thus we have obtained an equilateral triangulation of   
       the finite piece $\tilde{X}_0 \defeq \psi_0(X_0)$ of
        $X$, which is bounded by the curves $\psi_0(\gamma)$ for 
        $\gamma\in\Gamma(X_0)$. Consider the piece $\tilde{U}_1$ whose outer
        boundary curves are the outer boundary curves of $U_1$, and whose inner
        boundary curves are given by $\psi_0(\gamma)$ for $\gamma\in\Gamma^-(U_1)$. 
        Then $\tilde{U}_1$ is a hemmed Riemann surface, where for the inner curves
        we use the boundary correspondence given by 
        \[ \phi_1^{\gamma}(\zeta) \defeq \psi_0\left(\phi^{\gamma}\left(\frac{1}{\zeta}\right)\right), \] 
         defined on some annulus $\AA_-(R^{\gamma})$. 
         Observe that
             \[ f_0 \circ \phi_1^{\gamma}  \]
            is length-respecting on $S^1$, for each $\gamma$. 
            
         We may apply Proposition~\ref{prop:hemmedtriangulation} to this hemmed surface, using the same values
           $d^{\gamma}$ on the inner boundary curves of $\tilde{U}_1$~-- assuming they were 
           chosen sufficiently large in
           step $0$. We obtain a map $g\colon \tilde{U}_1 \to E_1$. By the length-preserving 
           properties of $g$ and $f_0$, it follows that $g$ extends 
           $f_0$ continuously to a quasiconformal map  $g_1$ from 
              \[ Y_1 \defeq \tilde{X}_0 \cup \tilde{U}_1 \cup \bigcup_{\gamma\in \Gamma^-(U_1)} \psi_0(\gamma) \subset \hat{X}_1 \] 
              to an equilateral surface $\mathcal{E}_1$, which is the union of $E_0$ and $E_1$, glued along corresponding boundary curves. 
            Again, \emph{assuming that all degrees are sufficiently large}, we straighten 
            $g_1$ using a quasiconformal map $\psi_1$ from $\hat{X}_1$ into $X$.
            The result is an equilateral triangulation of the finite piece
            $\tilde{X}_1 \defeq \psi_1(Y_1)$, and we continue inductively. 
            
      More formally, the construction depends on 
         a collection of numbers $(R^{\gamma})_{\gamma\in\Gamma}$, with 
         $1<R^{\gamma}<\hat{R}^{\gamma}$, and 
         positive integers $(d^{\gamma})_{\gamma\in\Gamma}$ with 
         $d^{\gamma} \geq \Deltafn(R^{\gamma})$. (Here $\Deltafn$ is the 
         function from Proposition~\ref{prop:hemmedtriangulation}.) 
       After the $(j-1)$-th stage of the construction, we will have constructed
         the following objects. 
          \begin{enumerate}[(1)]
             \item $\tilde{X}_{j-1}$ is a finite piece of $X$, homotopic to $X_{j-1}$ 
                  and contained in $\hat{X}_{j-1}$.\label{item:Xj} 
             \item\label{item:Psij} 
                   For each boundary curve $\gamma\in\Gamma(X_{j-1})$, the corresponding
                   boundary curve of $\tilde{X}_{j-1}$ is the image of $\gamma$ under 
                   a $K_0$-quasiconformal
                   map $\Psi^{\gamma}_{j-1}$. This map is defined on 
                   $A^{\gamma}(R^{\gamma})$ and 
                   conformal on $A^{\gamma}_+(R^{\gamma})$; furthermore, 
                   \[ \Psi^{\gamma}_{j-1}(A_-^{\gamma}(R^{\gamma})) \subset \tilde{X}_{j-1}
                    \qquad\text{and}\qquad 
                    \Psi^{\gamma}_{j-1}(A_+^{\gamma}(R^{\gamma})) \cap \tilde{X}_{j-1}=
                      \emptyset.\]
            \item $\Psi^{\gamma}_{j-1}(A^{\gamma}(R^{\gamma})) \subset \hat{A}^{\gamma}$
              for each $\gamma$ as in~\ref{item:Psij}.\label{item:Psijimage}
             \item $f_{j-1}\colon \closure(\tilde{X}_{j-1})\to \mathcal{E}_{j-1}$ is a homeomorphism that is conformal on $\tilde{X}_{j-1}$, where
                $\mathcal{E}_{j-1}$ is a finite equilateral surface-with-boundary. For $\gamma\in\Gamma(X_{j-1})$, the map 
                   \[ f_{j-1} \circ \Psi^{\gamma}_{j-1} \circ \phi^{\gamma} \] 
                   maps each edge of the partition $\Xi_{d^{\gamma}}$ to a boundary edge of $\mathcal{E}_{j-1}$ in length-preserving fashion.\label{item:fj}
            \item In $\mathcal{E}_{j-1}$, every inner vertex is incident to at most $2s_0-2$ edges, and every
                boundary vertex is incident to at most $s_0$ edges.\label{item:Eboundeddegree}
          \end{enumerate}
        For $j=0$, we use the convention that 
          $\tilde{X}_{-1} = \Gamma(X_{-1}) = \mathcal{E}_{-1} = \emptyset$, so that the hypotheses are
          trivial.
           
        The inductive construction proceeds as follows. 

        \textbf{Step 1.} 
        We define $\tilde{U}_j$ to be the finite piece of $X$ bounded by 
          the curves in $\Gamma^+(U_j)$ and the curves
          $\Psi^{\gamma}_{j-1}(\gamma)$ for 
          $\gamma\in\Gamma^-(U_j)$. This piece becomes
          a hemmed surface when equipped with the boundary parameterisations
          $\phi^{\gamma}$ for the boundary curves $\gamma\in \Gamma^+(U_j)$ and 
             \[ \phi^{\gamma}_{j-1}(\zeta) \defeq 
                   \Psi^{\gamma}_{j-1}\left( \phi^{\gamma}(1/\zeta)\right) \]
         for the others. 
         
         \textbf{Step 2.} 
            We apply Proposition~\ref{prop:hemmedtriangulation} to obtain a quasiconformal map 
            \[ g_j \colon \closure(\tilde{U}_j) \to E_j, \] 
             where $E_j$ is a finite equilateral surface-with-boundary, and every vertex of $E_j$ has local degree at most $s_0$. 
            For each $\gamma\in\Gamma^-(U_j)$, the function
            $g_j \circ \Psi^{\gamma}_{j-1}\circ \phi^{\gamma}$ maps each edge of $\Xi_{d^{\gamma}}$ to 
            an edge of $E_j$ in length-respecting fashion. (Note that the map $\zeta\mapsto 1/\zeta$ is itself length-respecting on $S^1$.) 
                            
      \textbf{Step 3.} Next, we apply Proposition~\ref{prop:straightening}, 
        where $S=\hat{X}_j$ and $\mu$ is the Beltrami differential of $g_j$ on $\tilde{U}_j$,
        and $0$ elsewhere. We obtain a quasiconformal homeomorphism
        $\psi_j\colon S\to \psi(S)\subset X$, isotopic to the identity. Of course, we can
        only apply Proposition~\ref{prop:straightening} if the support of $\mu$ is
        sufficiently small; we show below that it is possible to ensure
        this by choosing 
        the sequence $(R^{\gamma})_{\gamma\in\Gamma}$ appropriately. 
        
      \textbf{Step 4.} Finally, we define $\tilde{X}_j$, functions
         $\Psi^{\gamma}_j$, an equilateral surface $\mathcal{E}_j$ and
         a function $f_j\colon \closure(\tilde{X}_j)\to\mathcal{E}_j$ such that~\ref{item:Xj},~\ref{item:Psij} and~\ref{item:fj}
         hold (with $j-1$ replaced by $j$). 
         
         Firstly, set 
         \[ Y_j \defeq \tilde{X}_{j-1} \cup \tilde{U}_j \cup 
                  \bigcup_{\gamma\in \Gamma^-(U_j)} \Psi^{\gamma}_{j-1}(\gamma)
                     \qquad\text{and}\qquad \tilde{X}_j \defeq \psi_j(Y_j). \] 
       Then $\tilde{X}_j$ is a finite piece of $X$, homotopic to $X_j$. 
         
         Note that 
           \[ \Gamma(X_j) = (\Gamma(X_{j-1})\setminus \Gamma^-(U_j)) \cup
                           \Gamma^+(U_j). \]
        The boundary curves of $\tilde{X}_j$ are given by 
           the curves $\Psi^{\gamma}_j(\gamma)$, where
            \begin{equation}\label{eqn:newboundaryparameterisation}
               \Psi^{\gamma}_j = \psi_j \circ \Psi^{\gamma}_{j-1} \end{equation}
            when $\gamma\in \Gamma(X_{j-1})\setminus \Gamma^-(U_j)$ and
             $\Psi^{\gamma}_j = \psi_j$ when
             $\gamma\in \Gamma^+(U_j)$.
            In~\eqref{eqn:newboundaryparameterisation}, recall that $\psi_j$ is
            conformal outside of $\tilde{U}_j$, and hence on $\hat{A}_{\gamma}$ for $\gamma\in\Gamma(X_{j-1})\setminus \Gamma^-(U_j)$. So
            $\Psi^{\gamma}$ is indeed $K_0$-quasiconformal on 
            $A^{\gamma}(R^{\gamma})$ and conformal on 
            $A^{\gamma}_+(R^{\gamma})$. It follows that~\ref{item:Psij} holds for
            our maps $\Psi^{\gamma}_j$. 
                    
        Finally, let $\gamma\in\Gamma^-(U_j)$, let $e$ be an edge of $\Xi_{d^{\gamma}}$, and consider
         $\tilde{e}\defeq \Psi_{j-1}^{\gamma}(\phi^{\gamma}(e))$. Then 
          $f_{j-1}(\tilde{e})$ is a boundary edge of $\mathcal{E}_{j-1}$, and $g_j(\tilde{e})$ is a boundary
          edge of $E_j$. We form an equilateral surface-with-boundary $\mathcal{E}_j$ by identifying these two boundary edges for
          each $\gamma$ and each $e$. 
         We identify $\mathcal{E}_{j-1}$ and $E_j$ with their corresponding subsets of 
          $\mathcal{E}_j$.         
          Every boundary vertex of $\mathcal{E}_j$ is a boundary vertex of $\mathcal{E}_{j-1}$ or of
             $E_j$, and therefore has local degree at most $s_0$. Every inner vertex of $\mathcal{E}_j$ is either an inner vertex of
             $\mathcal{E}_{j_1}$ or of $E_j$, or it is a common boundary vertex of both. In the latter case, the vertex is connected to at most 
             $s_0-2$ inner edges of $\mathcal{E}_{j-1}$, at most $s_0-2$ inner edges of $E_j$, and two common boundary edges of the two. 
             This establishes~\ref{item:Eboundeddegree} for $\mathcal{E}_j$. 
          
       Both $f_{j-1}$ and $g_j$ take values in $\mathcal{E}_j$. Let $\gamma$, $e$  and $\tilde{e}$ be as above, and define
          $\hat{e}=f_{j-1}(\tilde{e}) = g_j(\tilde{e})$. 
          By~\ref{item:fj} and the observation on $g_j$ in Step~2, the map $g_j\circ f_{j-1}^{-1}$ is an isometry of the edge $\hat{e}$.
          Keeping in mind that $f_{j-1}$ and $g_j$ are orientation-preserving, and take values on opposite sides of $\hat{e}$ in $\mathcal{E}_j$, 
          it follows that $g_j\circ f_{j-1}^{-1} = \id$ on $\hat{e}$. Thus
             \[ g \colon \closure(Y_j) \to \mathcal{E}_j; \quad z\mapsto \begin{cases}
                                                                                                          f_{j-1}(z) &\text{if } z\in \closure(\tilde{X}_{j-1}) \\
                                                                                                          g_{j}(z) &\text{if } z\in \closure(\tilde{U}_{j})\end{cases} \]
        is a well-defined homeomorphism. The function $f_{j-1}$ is $K_0$-quasiconformal on $\tilde{X}_{j-1}$, and $g_j$ is $K_0$-quasiconformal on 
           $\tilde{U}_j$. Since the common boundary curves
          $(\Psi_{j-1}^{\gamma}(\gamma))_{\gamma\in\Gamma^-(U_j)}$ are quasicircles, 
          $g$ is $K_0$-quasiconformal on all of $Y_j$. 
        
          Now define       
             \[ f_j \defeq g \circ \psi_j^{-1} \colon \closure(\tilde{X}_j) \to\mathcal{E}_j. \] 
               Then $f_j$ is conformal on $\tilde{X}_j$ and satisfies~\ref{item:fj}. 

       It remains to see that 
       Proposition~\ref{prop:straightening} can always be
        applied in Step 3, and that $\psi_j$ is sufficiently close to the
        identity that~\ref{item:Psijimage}, and therefore~\ref{item:Xj}, hold. 
	This requires that the  complex dilatation of the map $g_j$ can be chosen
        to be supported on a sufficiently small set. By
        Proposition~\ref{prop:hemmedtriangulation}, this dilatation is supported
        on the annuli $\Psi^{\gamma}_{j-1}(A_+^{\gamma}(R^{\gamma}))$ 
        for inner curves of $U_j$ and on the 
        annuli $\phi^{\gamma}(A_-^{\gamma}(R^{\gamma}))$  for outer curves of
        $U_j$, together with a set of negligible area. The area of the latter annuli 
        can be made small simply by choosing $R^{\gamma}$ small enough. 
        
      For the 
        former annuli, on the other hand, we must be slightly more careful. Indeed,
        the map $\Psi^{\gamma}$ is the composition of
        $\psi_{j-1}$, $\psi_{j-2}$, \dots, $\psi_{\iota^-(\gamma)}$. The last of these
        depends on $d^{\gamma}$, which in turn depends on 
        $R^{\gamma}$. So $R^{\gamma}$ must be chosen so that the
        image $A_-^{\gamma}(R^{\gamma})$ under $\Psi^{\gamma}$ is small,
        independently of the choices that determine $\Psi^{\gamma}$. Happily, 
        since the dilatation of $\Psi^{\gamma}$ is uniformly bounded, we can do so
        using the area distortion of quasiconformal mappings 
        (Proposition~\ref{prop:areadistortion}). 
            
       To make all of this precise, for each $\gamma\in\Gamma$ choose annuli 
       $\hat{A}^{\gamma}_1$ and $\hat{A}^{\gamma}_2$ with
         \[ \gamma \subset
           \hat{A}^{\gamma}_1, 
           \quad \closure(\hat{A}^{\gamma}_1) \subset \hat{A}^{\gamma}_2, \quad\text{and}\quad
           \closure(\hat{A}^{\gamma}_2) \subset \hat{A}^{\gamma}.\]
        We set 
     \[ \eps^{\gamma}_1 \defeq 
               \dist(\hat{A}^{\gamma}_2 , 
                   \partial \hat{A}^{\gamma}). \]
         Also let $\eps^{\gamma}_2$ be the constant $\eps$ from                    
           Proposition~\ref{prop:areadistortion}, with $K=K_0$, 
             $S = \hat{A}^{\gamma}_2$, and $B = \closure(\hat{A}^{\gamma}_1)$. 
             Also let $\theta = \theta^{\gamma}\colon (0,\infty)\to (0,\infty)$ be the
             function from the same proposition. So 
             a $K_0$-quasiconformal map from $\hat{A}^{\gamma}_2$ into $X$ 
             maps sets of area at most
             $\eta$ to sets of area at most $\theta^{\gamma}(\eta)$, 
             provided that it does not
             move points by more than $\eps^{\gamma}_2$. Define
             \[ \eps^{\gamma}\defeq \min(1,\eps^{\gamma}_1,\eps^{\gamma}_2). \]
             
         Next, for $j\geq 0$, choose $\eta_j$ 
               according to 
               Proposition~\ref{prop:straightening}, 
               where we use $S=\hat{X}_j$, $K=K_0$, and
                  \[ \delta = \delta_j \defeq 
                        2^{-(j+1)}\cdot \min_{\gamma\in \Gamma(X_j)} \eps^{\gamma}. \]
               
 Finally, choose 
$R^{\gamma}$
             sufficiently close to $1$ to ensure that 
             \begin{itemize}
               \item $A^{\gamma}(R^{\gamma})\subset \hat{A}^{\gamma}_1$, 
               \item $\displaystyle{\area(A^{\gamma}_-(R^{\gamma})) \leq 
                              \frac{\eta_{\iota^-(\gamma)}}{2\# \Gamma(U_{\iota^-(\gamma)})}}$, and 
               \item $\displaystyle{\theta^{\gamma}(\area(A^{\gamma}_+(R^{\gamma}))) \leq \frac{\eta_{\iota^+(\gamma)}}{2\# \Gamma(U_{\iota^+(\gamma)})}}$.
           \end{itemize}
           
Observe that this choice of $(R^{\gamma})_{\gamma\in\Gamma}$ depends only
  on the surface $X$, its metric $\rho$ and the decomposition 
 $(U_j)_{j\geq 0}$ of $X$ into finite pieces. We claim
  that, in our inductive construction, we can ensure 
\begin{enumerate}[resume*]
   \item\label{item:Psijcontrol}
    $\Psi_{j-1}^{\gamma}$ is defined on $\hat{A}^{\gamma}_2$, 
   where it satisfies
    \[ \dist(\Psi_{j-1}^{\gamma}(z),z) \leq (1-2^{-j})\cdot \eps^{\gamma}, \]
\end{enumerate}
in addition to~\ref{item:Xj}--\ref{item:fj}.

 By choice of $\eps^{\gamma}_1$ and $R^{\gamma}$, 
\ref{item:Psijcontrol} implies 
 \begin{equation}\label{eqn:Psijcontrol}
\Psi_{j-1}^{\gamma}(A^{\gamma}(R^{\gamma})) \subset
 \Psi_{j-1}^{\gamma}(\hat{A}^{\gamma}_1) \subset 
 \Psi_{j-1}^{\gamma}(\hat{A}^{\gamma}_2)\subset \hat{A}^{\gamma}. 
\end{equation}
In particular,~\ref{item:Psijimage} and~\ref{item:Xj} follow.
                       
In order to  obtain~\ref{item:Psijcontrol}, we use 
 $\eta = \eta_j/2$ when applying Proposition~\ref{prop:hemmedtriangulation}
in Step 2 of the inductive construction.
 The complex dilatation of $g_j$ is then supported on the union of 
 \begin{enumerate}[(a)]
 \item a set of area at most $\eta$;
 \item the annuli $A^{\gamma}_-(R^\gamma)$ for the outer curves 
   of $U_j$; i.e., those $\gamma\in\Gamma$ for which
  $\iota^-(\gamma)=j$;\label{item:outerannuliarea}
\item the annuli $\Psi_{j-1}^{\gamma}(A^{\gamma}_+(R^{\gamma}))$
   for the inner curves of $U_j$, i.e. those $\gamma\in\Gamma$ for which
   $\iota^+(\gamma)=j$.\label{item:innerannuliarea}
\end{enumerate}
  By choice of $R^{\gamma}$ and $\theta^{\gamma}$, and 
by~\eqref{eqn:Psijcontrol}, we see that each of the annuli
  in~\ref{item:outerannuliarea} and~\ref{item:innerannuliarea} has area at most
 \[   \frac{\eta_{j}}{2\# \Gamma(U_j)}. \]
  So the support of the dilatation has area at most $\eta_j$. 
             
            By choice of $\eta_j$, this implies that Proposition~\ref{prop:straightening}
              can indeed by applied in Step 2, and $\psi_j$ moves points at most
              a distance of $\delta_j$. Now, using~\ref{item:Psijcontrol} for $\Psi_{j-1}$,
              it follows from the definition
              of $\Psi_j^{\gamma}$ that~\ref{item:Psijcontrol} also holds for $\Psi_j$. 
      The inductive
      construction is complete.
  
   To complete the proof, we claim that
    the functions $f_j$ converge to a conformal isomorphism $f$ between $X$ and an equilateral surface $\mathcal{E}$.
       To show this,   
    fix $j\geq 0$ and define
      \[ \alpha_n \defeq \psi_n \circ \psi_{n-1} \circ \dots \circ \psi_j  \]  
      for $n\geq j$. Then $\alpha_n$ is a quasiconformal map on a neighbourhood of $\closure(\tilde{U}_j)$. Furthermore, 
        \[ \dist\bigl(\alpha_n(z), \alpha_{n+1}(z)\bigr) =
            \dist\bigl(\alpha_n(z), \psi_{n+1}(\alpha_n(z))\bigr) \leq \delta_{n+1} \leq 1/2^{n+2}. \]
            So the maps $\alpha_n$ 
      form a Cauchy sequence, and converge to a non-constant function $\alpha$ on $\closure(\tilde{U}_j)$.
      
We claim that the maximal dilatation of $\alpha_n$ is bounded by $K_0$, which is independent of $j$ and $n$. 
     Recall that, for $k\geq 0$, the 
      complex dilatation of $\psi_k$ is supported on $\tilde{U}_k$; in particular, $\psi_k$ is 
      conformal on $\tilde{X}_{k-1}$ if $k\geq 1$. Since $\psi_k(\tilde{U}_k) \subset \psi_k(Y_k) = \tilde{X}_j$, 
      it follows inductively that $\psi_n\circ\dots\ \psi_{j+1}$ is conformal on 
      $\psi_j(\tilde{U}_j)$, and hence the maximal dilatation of $\alpha_n$ on $\tilde{U}_j$ is the same
      as that of $\psi_j$, which is bounded by $K_0$.
      As a uniform limit of $K_0$-quasiconformal
      maps, $\alpha$ is also $K_0$-quasiconformal. Moreover, $\alpha_n^{-1}\to \alpha^{-1}$.
        By definition of $f_n$, we have 
          \[ f_n \circ \alpha_n|_{\tilde{U}_j} = 
             f_{n-1} \circ \alpha_{n-1}|_{\tilde{U}_j} = \dots = 
             g_j, \] 
         and hence $f_n\to g_j\circ \alpha^{-1}$ uniformly on $\closure(\tilde{U}_j)$. 
            
     So the partially defined conformal maps $f_n$ converge locally uniformly to a global conformal 
      function 
         \[ f\colon X\to \mathcal{E} \defeq \bigcup_{j=0}^{\infty} \mathcal{E}_j. \]
       Hence $X$ is conformally equivalent to the (infinite) equilateral surface $\mathcal{E}$, and the proof
       of Theorem~\ref{thm:triangulation} is complete.
 \end{proof}      
 
 \begin{proof}[Proof of Theorems~\ref{thm:belyi} and~\ref{thm:boundeddegree}]
   By Theorem~\ref{thm:triangulation}, there is an equilateral triangulation $\mathcal{T}$ on $X$. By Proposition~\ref{prop:equivalence}, there is a Belyi function 
      $f$ on $X$. This proves Theorem~\ref{thm:belyi}.

    Moreover, the triangulation $\mathcal{T}$ has the property that no vertex is incident to more than
     $2s_0-2$ edges (recall~\ref{item:Eboundeddegree} in the proof of Theorem~\ref{thm:triangulation}).
     The Belyi function constructed in the proof of Proposition~\ref{prop:equivalence} has the property that every 
     preimage of $-1$ has degree $2$, every preimage of $\infty$ has degree $3$. Furthermore, the preimages of 
     $1$ are precisely the vertices of $\mathcal{T}$, and the components of
	 $f^{-1} ([-1,1))$ are the 
     edges of $\mathcal{T}$. So every critical point of 
     $f$ has degree at most $2s_0-2$. 
 \end{proof}

\begin{figure}
\begin{center}
\subcaptionbox{\label{fig:split_triangle}Subdivision of a boundary triangle}{\includegraphics[height=0.26\textheight]{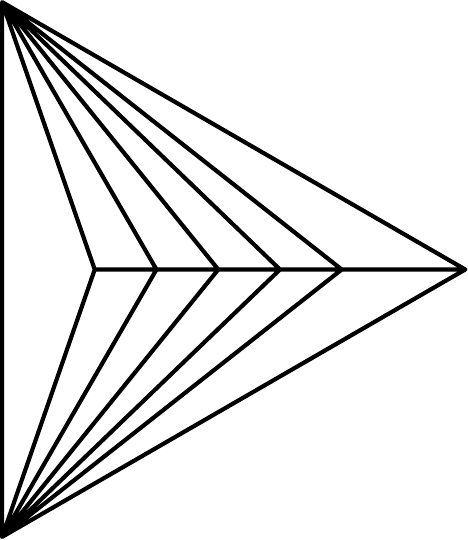}}%
\hfill%
\subcaptionbox{Subdividing all boundary triangles\label{fig:split_triangle_domain}}{\includegraphics[height = 0.26\textheight]{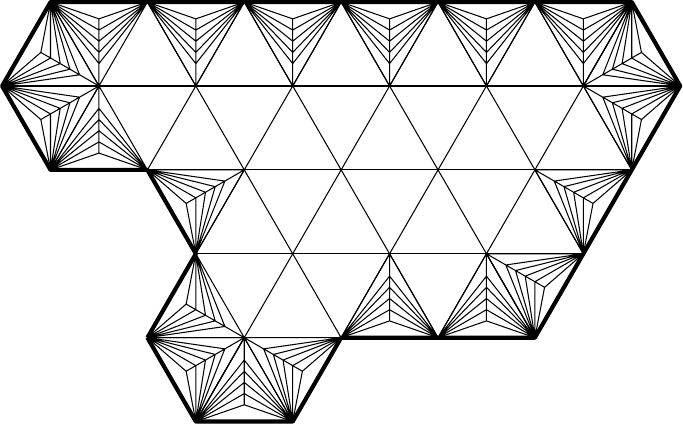}}%
\end{center}
\caption{\label{fig:subdivision}Proof of Lemma~\ref{lem:subdivision}.}
\end{figure}

 It is intuitively clear that our proof of Theorem~\ref{thm:triangulation} involves infinitely many independent choices, leading to uncountably many different 
   combinatorially
   different triangulations. To make this precise, and hence to prove Corollary~\ref{cor:uncountable}, we will use the following 
   strengthening of Proposition~\ref{prop:hemmedtriangulation}. 
      \begin{lem}\label{lem:subdivision}
         In Proposition~\ref{prop:hemmedtriangulation}, we may replace~\ref{item:angles} by 
           \begin{enumerate}[(A)]
              \item  There are universal constants $D_0\geq D_1 > 4$ with the following
                property. Every boundary vertex of $E$ has degree at least $D_1$ and at most $D_0$, and every inner vertex of $E$ has degree less than $D_1$. 
          \end{enumerate}
      \end{lem}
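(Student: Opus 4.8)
The plan is to leave the equilateral surface $E$ and the map $g$ of Proposition~\ref{prop:hemmedtriangulation} essentially alone and to modify the triangulation of $E$ only inside the triangles touching $\partial E$, replacing each such \emph{boundary triangle} by a fixed combinatorial ``gadget'' (Figures~\ref{fig:split_triangle}, \ref{fig:subdivision}) that inflates the degree of every boundary vertex past the universal ceiling $d_0$ that Proposition~\ref{prop:hemmedtriangulation} already puts on all the other degrees, while disturbing no interior degree by more than a universal amount and no edge of $\partial E$ at all. The point to exploit is that the interior degrees are already universally bounded, so it is enough to push the boundary degrees up a little, keeping them bounded, without lowering anything.

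Two features of the construction behind Proposition~\ref{prop:hemmedtriangulation} will be used. First, the part of $E$ near each boundary curve $\gamma$ comes from triangulating a Euclidean rectangle and straightening its triangles by real-affine maps; hence no boundary triangle of $E$ can have its third vertex on $\partial E$ (three collinear points form no triangle), and $g^{-1}$ carries the union of the boundary triangles into $\bigcup_{\gamma}A^{\gamma}$. Second, after a harmless preliminary subdivision~-- coning from an interior barycentre any boundary triangle that happens to carry two edges of $\partial E$, which produces only triangles of shape $(30^{\circ},30^{\circ},120^{\circ})$~-- we may, and I will, assume that every boundary triangle of $E$ carries exactly one edge of $\partial E$, and consequently that every boundary vertex of $E$ lies on exactly two boundary triangles.

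Now I fix the gadget, with $m\defeq d_0$. In a boundary triangle $T=\triangle(x,x',y)$ with $xx'\subset\partial E$, and keeping $\partial T$ pointwise fixed, I place $m$ new vertices $p_1,\dots,p_m$ on a short circular arc about $x$ spanning the interior angle of $T$ there, and triangulate $T$ by the fan from $x$ over $x',p_1,\dots,p_m,y$ followed by the fan from $x'$ over the residual polygon $x'p_1\cdots p_my$ (the picture in Figure~\ref{fig:split_triangle}). Three things are then immediate: the edges $xx'$, $xy$, $x'y$ are not subdivided; within $T$ the vertices $x$ and $x'$ each gain $m$ incident edges, $y$ gains exactly one, and each $p_k$ has degree at most $4$; and, for a suitable universal choice of the arc radius and angular spacing, every triangle of the subdivision has all its angles at least $\theta^{*}=\theta^{*}(d_0)>0$ (the narrowest angles occur at the boundary vertices $x,x'$ and are $\asymp1/d_0$). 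Performing this replacement in every boundary triangle and then, exactly as in the proof of Proposition~\ref{prop:hemmedtriangulation}, gluing a unit equilateral triangle to each face of the resulting complex yields an equilateral surface $E'$; the piecewise-real-affine map $H\colon E\to E'$ is a homeomorphism, is conformal off the boundary triangles, restricts to an isometry on each edge of $\partial E$, and is $K^{*}=K^{*}(d_0)$-quasiconformal by the angle bound. Setting $g'\defeq H\circ g$ then gives a $K_0K^{*}$-quasiconformal homeomorphism $\overline U\to E'$ whose dilatation, by the first feature above, is supported on $\bigcup_{\gamma}A^{\gamma}$ together with the set of $\rho$-measure at most $\eta$ inherited from $g$, and for which the length-respecting property on each $\Xi_{d^{\gamma}}$ persists because $H$ is isometric on boundary edges; so conclusions~\ref{item:length-respecting}, (c) and (d) of Proposition~\ref{prop:hemmedtriangulation} survive (with $K_0$ replaced by the universal constant $K_0K^{*}$). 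For the degrees: each boundary vertex of $E'$ lies on exactly two subdivided boundary triangles and so picks up exactly $2m=2d_0$ new incident edges, giving it degree between $2d_0+2$ and $3d_0+1$; each interior vertex of $E'$ is either a new $p_k$ (degree $\le 4$), or an interior vertex of $E$ that is the third vertex of at most $d_0$ boundary triangles and so gains at most $d_0$ edges (degree $\le 2d_0$), or an untouched interior vertex of $E$ (degree $\le d_0$). Taking $D_1\defeq 2d_0+1$ and $D_0\defeq 3d_0+1$~-- universal, with $D_0\ge D_1>4$ since $d_0\ge 6$~-- delivers property (A).

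The delicate point, and where I expect the real work to lie, is arranging the gadget so that all four demands hold at once: it must (i) raise \emph{every} boundary degree strictly above the fixed ceiling $2d_0$ on interior degrees, (ii) raise each interior degree by at most a universal amount, (iii) have uniformly bounded geometry so that $H$ lies in a single bounded-dilatation family, and (iv) touch neither the edges of $\partial E$ nor the triangulation away from the collar, so that conclusions~\ref{item:length-respecting}, (c), (d) are inherited verbatim. The reconciling observation is that for (iii) one is free to use very thin triangles provided their smallest angle is bounded below by a constant that may depend on $d_0$~-- hence is still universal; this is precisely what lets a \emph{single} fan layer split the one interior edge at a boundary vertex into $\asymp d_0$ edges without leaving the bounded-dilatation regime. (One could instead avoid the two structural features above by inserting the gadget directly into the rectangle $Q^{\gamma}$ of the proof of Proposition~\ref{prop:hemmedtriangulation}, triangulating a thin strip along its $\Xi_{d^{\gamma}}$-side by the gadget and the rest by Proposition~\ref{prop:rectangle}.)
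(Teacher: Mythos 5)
Your proof is correct and follows essentially the same route as the paper: the gadget you describe (a fan of $d_0$ interior vertices each joined to both endpoints of the boundary edge) is combinatorially identical to the paper's subdivision of a boundary triangle shown in Figure~\ref{fig:split_triangle}, and the degree bookkeeping and the preservation of conclusions~\ref{item:length-respecting}, (c), (d) of Proposition~\ref{prop:hemmedtriangulation} are handled the same way. You are slightly more explicit than the paper on one point, namely reducing by a preliminary barycentric coning to the case that every boundary triangle carries exactly one edge of $\partial E$, which the paper handles by a brief remark; note only that after that coning a boundary vertex may have degree as large as $d_0+2$ (if it is an end vertex of two coned triangles), so the upper bound should be $D_0=3d_0+2$ rather than $3d_0+1$~-- a harmless adjustment.
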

      \begin{proof}
          Let $\Delta$ be an equilateral triangle with vertices $A$, $B$, $C$. We may triangulate $T$ by adding $s_0$ vertices $v_1,\dots,v_{s_0}$ inside $T$, 
             where each $d_i$ is connected to $A$ and $B$ and also
             to $d_{i-1}$, with the convention that $v_0=C$. (See Figure~\ref{fig:split_triangle}.) Mapping these triangles in an affine manner to equilateral triangles, we obtain
             a quasiconformal map $h\colon T\to E_0$, where $E_0$ is an equilateral surface-with-boundary. On this surface, the two boundary vertices corresponding to
             $A$ and $B$ have degree $s_0+2$, while $C$ has degree $3$ and the interior vertices all have degree $3$ or $4$. 
      
         Let $\tilde{E}$ be the equilateral surface obtained in Proposition~\ref{prop:hemmedtriangulation}, and let $T$ be a boundary triangle; i.e., a triangle 
           in $\tilde{E}$ that has an edge on
            $\partial\tilde{E}$. We may identify $T$ with $\Delta$ such that the boundary edge corresponds to the edge $AB$. 
             We assume that $C$ is an interior vertex of $\tilde{E}$. (This is always true if we follow
            the construction in the proof of Proposition~\ref{prop:hemmedtriangulation}, but the argument
             is easily adapted if this is not the case.) 

         We can glue a copy of $E_0$ into $\tilde{E}$ in place of $T$, for every such triangle $T$. The result is 
            a new equilateral surface $E$, and a quasiconformal homeomorphism $h_1\colon \tilde{E}\to E$, whose  maximal dilatation coincides
            with that of $h$. Every boundary vertex of $\tilde{E}$ belongs to exactly two boundary triangles.
            Hence, on $E$, each of these vertices has local degree at least $D_1\defeq 2 + 2s_0\geq 14$, and at most 
            $D_0 \defeq 3 s_0$. On the other hand, any interior vertex of $\tilde{E}$ belongs to at most $s_0$ triangles. Thus it arises as the vertex
            $C$ in the above construction for at most $s_0$ different triangles, and has degree at most $2s_0 < D_1$ in $E$. Any new vertices
            in $E$ have degree at most $4 < D_1$. This completes the proof. 
      \end{proof}

\begin{proof}[Proof of Corollary~\ref{cor:uncountable}]
   First suppose that $X$ is non-compact. Let $\mathcal{T}$ be an equilateral triangulation on $X$, and 
      let $f\colon X\to\Ch$ be the corresponding Belyi function from Proposition~\ref{prop:equivalence}. 
      The vertices and edges of $\mathcal{T}$ are given by $f^{-1}(-1)$ and $f^{-1}([-1,1))$, respectively. Hence it is enough to show that 
      the proof of Theorem~\ref{thm:triangulation} can produce uncountably many different triangulations of $X$, no two of which agree up to a conformal isomorphism of $X$. 
      
   We use the notation from the proof of Theorem~\ref{thm:triangulation}, but at each stage of the construction,
     we apply the modified version of Proposition~\ref{prop:hemmedtriangulation} from Lemma~\ref{lem:subdivision}. 
     Let $\gamma\in\Gamma$, set $j \defeq \iota_-(\gamma)$, and let
     $\alpha=\alpha^j\colon \hat{X}_j\to X$ be the quasiconformal map obtained
     at the conclusion of the proof. Then 
     $\alpha^j(\gamma)$ consists of a cycle of 
     $d^{\gamma}$ edges of $\mathcal{T}$, with all vertices on this cycle having degree at least $2D_1-2 > D_1$. 
     On the other hand, any vertex of $\mathcal{T}$ that does not lie on one of these curves has degree strictly less than $D_1$.
     
   It follows 
       that the sets
       \begin{align*} \mathcal{D} &\defeq \{d^{\gamma}\colon \gamma\in\Gamma\} 
            \qquad\text{and}\\ 
            \Pi(\mathcal{D}) &\defeq \{p \text{ prime: $p$ divides $d$ for
                some $d\in\mathcal{D}$} \} \end{align*}
       are uniquely determined by the combinatorial structure of $\mathcal{T}$
       as an abstract graph.  For any infinite set $P$ 
   of prime numbers, we can choose a sequence $(d^{\gamma})_{\gamma\in\Gamma}$ 
   in such a way 
   that $d^{\gamma} \geq \Deltafn(R^{\gamma})$
     and such that $\Pi(\mathcal{D}) = P$. 
   So there are uncountably many different equilateral triangulations 
    on $X$. 
    
    On the other hand, the number of compact equilateral Riemann surfaces with $n$ faces is clearly 
      finite for every $n$, so the number of compact equilateral Riemann surfaces is countable.
      As mentioned in Remark~\ref{rmk:3colouring}, up to pre-composition by a conformal isomorphism,
      a Belyi function on a Riemann surface $X$ is uniquely determined
      by an equilateral Riemann surface together with a 3-colouring of its triangulation.
\end{proof} 
    
\section{Appendix: Triangulations of rectangles}\label{sec:appendixtriangles}
 \begin{proof}[Proof of Proposition~\ref{prop:rectangle}]
 Let $\lambda>1$, let $Q$ be a rectangle, and let $P$ be a bounded-geometry partition with constant $\lambda$.
	Since an affine stretch $x+yi \mapsto x + a y i$, for 
	$ 1 \leq  a\leq 2$, only changes angles by a bounded 
	amount, we may assume 
	   \[ Q= \{ x + iy\colon 0\leq x \leq m, 0\leq y \leq 1 \} \] for some 
	natural number $m$. Thus $\interior(Q)$ is a union of dyadic 
	squares as shown in Figure~\ref{fig:whitney} for 
	a unit square; in general, the decomposition 
	consists of the  $8m-4$ 
	dyadic squares of side length $1/4$ that don't touch
	$\partial Q$, surrounded by rings of progressively 
	smaller dyadic squares of side length $1/8, 1/16, \dots$.
	
	\begin{figure}
 \subcaptionbox{\label{fig:whitney}Whitney decomposition of a square}{\includegraphics[trim= 50 0 50 0, clip,height=2.5in]{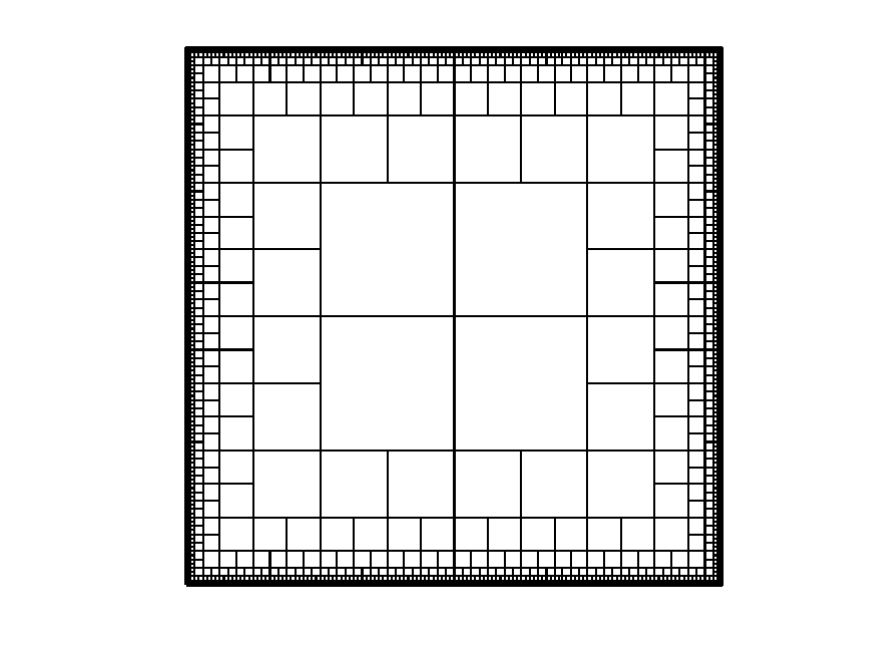}}\hfill%
 \subcaptionbox{\label{fig:sigmabdy}Polygonal arcs approximating $\partial Q$}{\includegraphics[trim= 50 0 50 0, clip,height=2.5in]{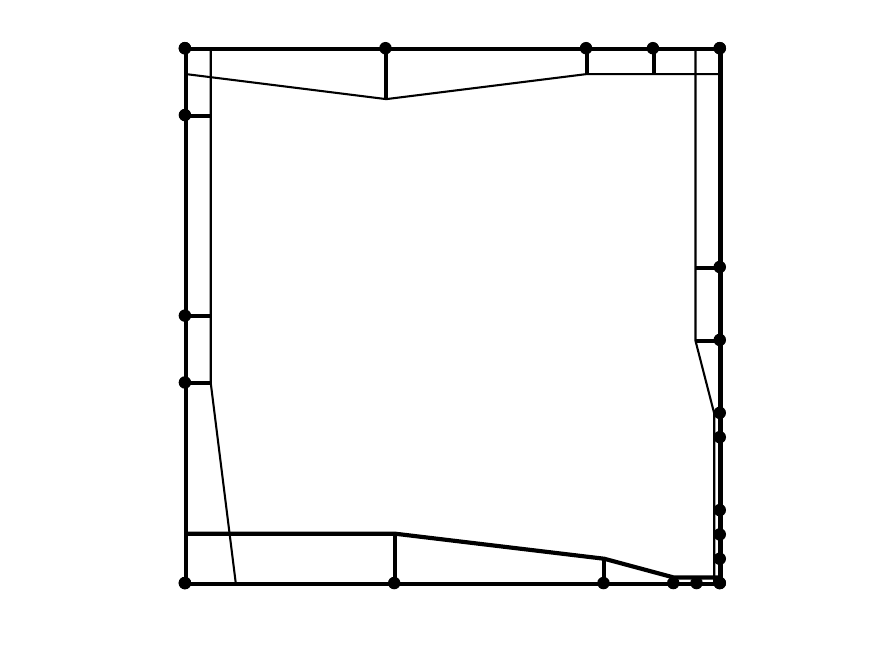}}\\
  \subcaptionbox{\label{fig:gamma}Whitney squares separated from $\partial Q$ by the arcs from~\subref{fig:sigmabdy}}{\includegraphics[trim=50 0 50 0,clip,height=2.5in]{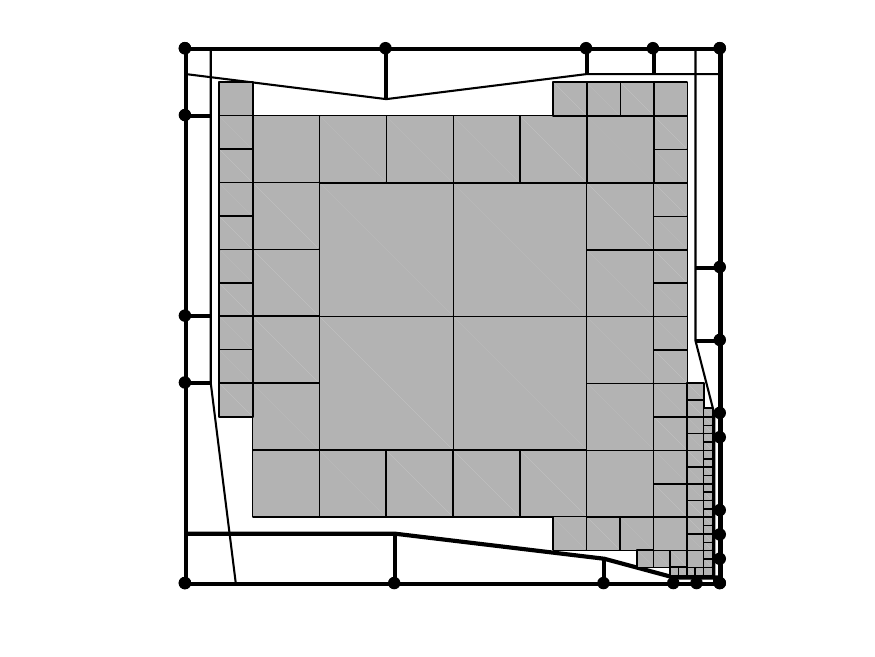}}\hfill%
  \subcaptionbox{\label{fig:triangulation}Triangulation of the region between the squares and $\partial Q$}{\includegraphics[trim=50 0 50 0,clip,height=2.5in]{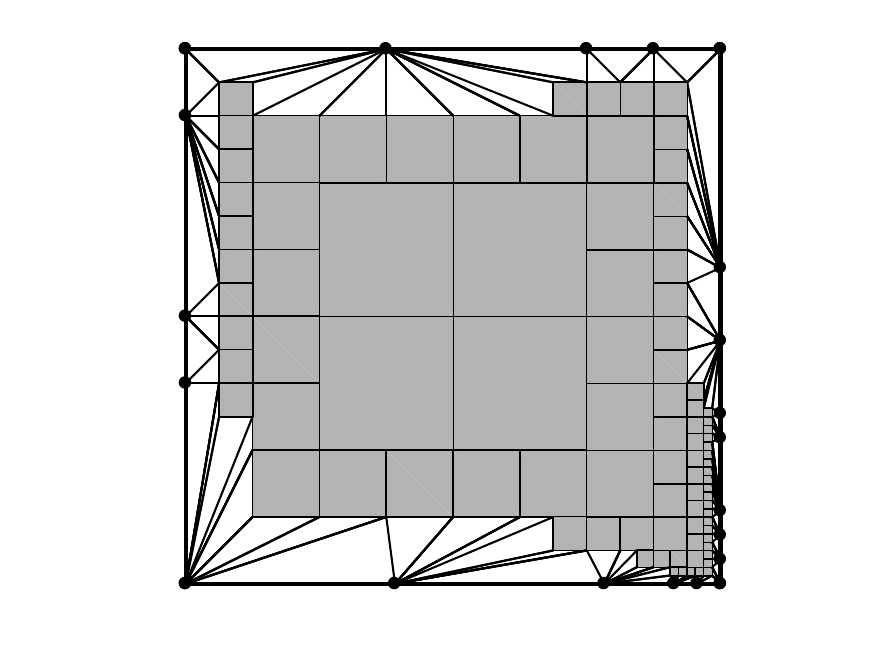}}
	\caption{\label{tri_algorithm} 
	Illustration of the proof of Proposition~\ref{prop:rectangle}}
\end{figure}

Let the $N\defeq \#P$ points of the partition be labelled as 
$x_0, x_1,\dots , x_N = x_0$ in positive orientation on $\partial Q$,
where $x_0=0$ is the lower left corner of $Q$. Indices are 
considered modulo $N$.
For each partition point $x_k$, set 
$D_k \defeq \min(|x_k-x_{k+1}|, |x_k-x_{k-1}|)$. 
Since the shorter side of $Q$  has length $1$, 
the bounded geometry assumption says that, for all indices $k$, 
 we have  $D_k\leq \lambda$ and 
    \[ \frac{1}{\lambda} \leq \frac{D_k}{D_{k+1}}\leq \lambda. \]

In particular, $D_k/(8 \lambda) \leq 1/8$, and so $D_k/(8\lambda)$ belongs to
a dyadic interval of the form $(2^{-j-1}, 2^{-j}]$
for some $j \geq 3$. Let $d_k = \frac{3}{4} 2^{-j}$ be the 
center of this interval.  Note that $d_k$ and 
$D_k/(8 \lambda)$ are 
comparable within a factor of $2$, 
so $d_k \leq D_k/(4 \lambda) \leq \min(\frac{1}{4}, D_k /4)$.

If $0=x_0 < x_1 < \dots < x_n =m$
are the partition points along the bottom edge of 
$Q$  let $z_k = x_k + i d_k$, $k=0, \dots, m$
and consider the polygonal arc  $\sigma$ with these vertices. (See Figure~\ref{fig:sigma}.)
Note that this arc connects the two vertical sides 
of $Q$ and stays within $1/4$ of the bottom edge. Moreover, every 
segment has slope between $-1/4$ and $1/4$, since
   \[ \frac{\lvert d_k - d_{k+1}\rvert}{\lvert x_k - x_{k+1}\rvert} \leq \frac{\max(d_k,d_{k+1})}{\lvert x_k - x_{k+1}\rvert}
	        \leq \frac{1}{4}\cdot \frac{ \max(D_k,D_{k+1})}{\lvert x_k - x_{k+1}\rvert} \leq \frac{1}{4}. \]  

Our choice of $d_k$ means that $z_k$ is at a 
height that is half way between the top and bottom
edges of the dyadic square $S$ that contains it. 
Since the segments of $\sigma$ have small slope, $\sigma$ leaves 
 $S$ through the two vertical side of $S$ 
and this also holds for the dyadic squares to the 
left and right of $S$.

\begin{figure}
\subcaptionbox{\label{fig:sigma}The curves $\sigma$ and $\gamma$}{\includegraphics[width=\textwidth]{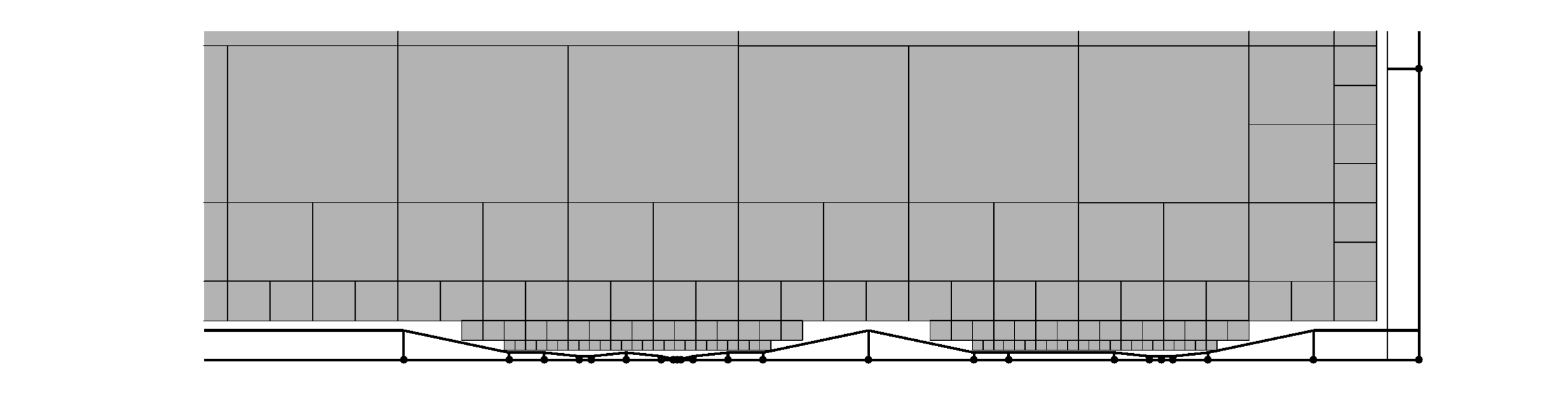}}
\subcaptionbox{\label{fig:gammatriangles}Triangulating the region between $\gamma$ and $\partial Q$}{\includegraphics[width=\textwidth]{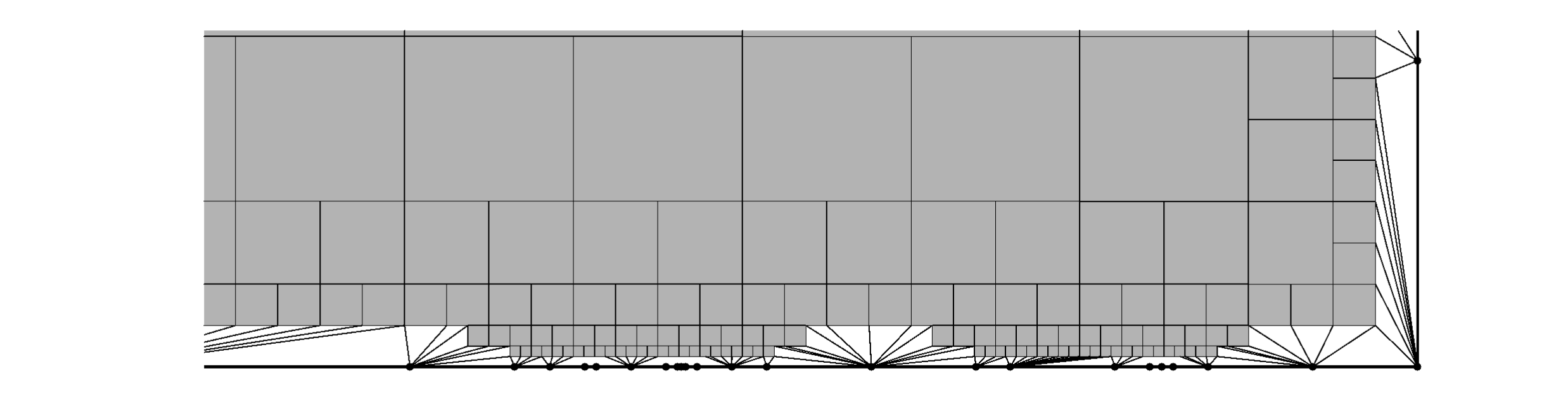}}
\caption{\label{Blowup} An enlargement of  the 
curves $\sigma$ (small slopes)  and $\gamma$  
(axis parallel boundary of boxes). 
near the boundary. Above a partition point $x_k$ the 
curve $\gamma$  is parallel to the boundary on length comparable 
to $d_k$, and  above each partition segment $\gamma$ is 
monotone (either it  is flat or forms a steps that are
all increasing or all decreasing). This makes it 
easy to verify that the region between $\gamma$ and 
$\partial Q$ can be triangulated with a lower angle 
bound and without adding vertices to $\partial Q$ or 
$\gamma$. In this picture the 
vertical scale is exaggerated to make $\sigma$ easier 
to see.}
\end{figure}

Making the same construction for each side we obtain 
four polygonal arcs $\sigma_0,\dots,\sigma_3$, each approximating one side of the 
rectangle; see Figure \ref{fig:sigmabdy}. Consider a corner point of the rectangle, say $x_0 = 0$ to fix our ideas.
The curves $\sigma_0$ and $\sigma_3$ reach the boundary at the points $i d_0$ and $d_0$, respectively,
and by the bounds on their slope, intersect in a single point within the dyadic square with centre 
	$d_0 + id_0$.

Now take the 
union of dyadic Whitney squares whose interiors do not hit the
curves $\sigma_j$ and are separated from $\partial Q$ by them 
(Figures~\ref{fig:gamma} and~\ref{fig:sigma}).
This union is itself bounded by an axis-parallel
polygon $\gamma$, which is the union of four polygonal arcs
$\gamma_0,\dots,\gamma_3$: The arc $\gamma_0$ begins at the 
upper right corner of the dyadic square centred at $d_0 + id_0$ 
(which contains the intersection point $\sigma_0$ and $\sigma_3$), and ends
similarly at the upper left corner of the square centred at $m - d_n + id_n$. (Recall
that $x_n$ is the lower right corner of the rectangle.) The arcs $\gamma_1,\dots,\gamma_3$
are characterised similarly.
	 
If we consider the polygonal arc $\sigma=\sigma_0$  corresponding to the 
bottom edge of $Q$, then the portion of $\gamma_0$ above 
each  partition arc $I$ is monotone and has a uniformly
bounded number of 
vertices, depending only on $\lambda$. Because of the monotone property, all the 
vertices  in the polygonal arc can be connected to the 
same endpoint of $I$ without hitting $\gamma$ and 
the angles between these  connecting segments is bounded
uniformly away from zero. (Figure~\ref{fig:gammatriangles}.) 
	
Moreover, for every 
partition point $x_k$ on the bottom edge, except the two corners, 
$\gamma_0$ is horizontal on some interval centered at $x_k$ 
and with length $\simeq d_k$; this is due to the property 
of $\sigma_0$  hitting only  the vertical sides of dyadic 
squares near $x_k$.
Therefore, connecting 
$x_k$ to the vertices of $\gamma_0$  whose projections are 
closest to $x_k$ to the right and left gives angles that are 
also bounded away from zero
(as mentioned above, these two points belong to the same horizontal line). 
 Do this for each side of the rectangle $Q$.
 Finally, we connect each corner to the joint endpoint of the two corresponding $\gamma_j$; 
 e.g., $0$ is connected to $4(d_0 + id_0)/3$. 
 
Now every pair $x_k$ and $x_{k+1}$ is connected to a common vertex of $\gamma$,
and likewise the two endpoints of every segment of $\gamma$ are connected to a common
vertex of our partition. Thus we have triangulated
the region between $\gamma$ and 
$\partial Q$ by triangles whose angles are bounded away from zero.        
 It is easy to triangulate the Whitney squares  so that  all 
the angles are  bounded away from zero, 
and this proves the proposition.
\end{proof}

\begin{remark}
 The method described above actually produces a triangulation 
with $O(N)$ elements where $N$ is the number of partition points 
we start with. It is simple to implement in practice; Figures
\ref{Blowup} and \ref{tri_algorithm} were produced using 
such an implementation in MATLAB.
\end{remark}

\providecommand{\bysame}{\leavevmode\hbox to3em{\hrulefill}\thinspace}
\providecommand{\MR}{\relax\ifhmode\unskip\space\fi MR }
% \MRhref is called by the amsart/book/proc definition of \MR.
\providecommand{\MRhref}[2]{%
  \href{http://www.ams.org/mathscinet-getitem?mr=#1}{#2}
}
\providecommand{\href}[2]{#2}

\end{document}